\newtheorem{theorem}{Theorem}[section]
\newtheorem{corollary}[theorem]{Corollary}
\newtheorem{lemma}[theorem]{Lemma}
\newtheorem{proposition}[theorem]{Proposition}
\theoremstyle{definition}
\newtheorem{definition}[theorem]{Definition}
\newtheorem{remark}[theorem]{Remark}
\newtheorem{question}[theorem]{Question}
\theoremstyle{plain}
\theoremstyle{definition}
\theoremstyle{remark}
\newcommand{\topol}{{\text{\rm top}}}
\newcommand{\cB}{{\mathcal B}}
\newcommand{\cJ}{{\mathcal J}}
\newcommand{\cL}{{\mathcal L}}
\newcommand{\cW}{{\mathcal W}}
\newcommand{\cC}{{\mathcal C}}
\newcommand{\cZ}{{\mathcal Z}}
\newcommand{\cV}{{\mathcal V}}
\newcommand{\cY}{{\mathcal Y}}
\newcommand{\fB}{{\mathfrak B}}
\newcommand{\fF}{{\mathfrak F}}
\newcommand{\fG}{{\mathfrak G}}
\newcommand{\fI}{{\mathfrak I}}
\newcommand{\Zb}{{\mathbb Z}}
\newcommand{\Rb}{{\mathbb R}}
\newcommand{\Nb}{{\mathbb N}}
\newcommand{\diam}{{\rm diam}}
\newcommand{\IE}{{\rm IE}}
\newcommand{\IN}{{\rm IN}}
\newcommand{\tr}{{\rm tr}}
\newcommand{\oA}{{\boldsymbol{A}}}
\newcommand{\oB}{{\boldsymbol{B}}}
\newcommand{\oU}{{\boldsymbol{U}}}
\newcommand{\ox}{{\boldsymbol{x}}}
\newcommand{\sW}{{\mathscr W}}
\newcommand{\Sym}{{\rm Sym}}
\newcommand{\Map}{{\rm Map}}
\newcommand{\eps}{\varepsilon}
\begin{document}


\title[Independence]{Combinatorial independence and sofic entropy}

\author{David Kerr}
\author{Hanfeng Li}
\address{\hskip-\parindent
David Kerr, Department of Mathematics, Texas A{\&}M University,
College Station, TX 77843-3368, U.S.A.}
\email{kerr@math.tamu.edu}

\address{\hskip-\parindent
Hanfeng Li, Department of Mathematics, Chongqing University,
Chongqing 401331, China.
Department of Mathematics, SUNY at Buffalo,
Buffalo, NY 14260-2900, U.S.A.}
\email{hfli@math.buffalo.edu}

\date{November 30, 2012}

\begin{abstract}
We undertake a local analysis of combinatorial independence as it connects to topological entropy
within the framework of actions of sofic groups.
\end{abstract}

\maketitle

\section{Introduction}

Among the various phenomena in dynamics
associated with randomness, weak mixing and entropy stand out
for the depth of their theory and the breadth of their applications (see
for example \cite{EinWar11,Gla03}).
In the setting of discrete acting groups, weak mixing makes sense in general
while entropy, as classically formulated, requires the group to be amenable.
One can view these two concepts in a unified way across
both measurable and topological dynamics by means of the combinatorial notion of independence.
In close parallel with the $\ell_1$ theorems of Rosenthal and Elton-Pajor in Banach space theory,
weak mixing and positive entropy
reflect two of the basic regimes in which combinatorial independence can occur across the
orbit of a tuple of sets in a dynamical system. The first of these asks for independence over a
subset of the group having infinite cardinality, while the other requires
this subset to satisfy a positive density condition.
Inspired by work in the local theory of entropy \cite{GlaYe09}, the authors studied
this connection between independence, weak mixing, and entropy in \cite{KerLi07,KerLi09}
as part of a program to develop a general theory of
combinatorial independence in dynamics.

Combinatorial independence is the basic set-theoretic expression of randomness in which
we are concerned not with the size of intersections, as in the probabilistic context, but
merely with their nonemptiness.
A collection $\{ (A_{i,1} , \dots , A_{i,k} ) \}_{i\in J}$
of $k$-tuples of subsets of a set $X$ is {\it independent}
if for every nonempty finite set $F\subseteq J$ and
function $\omega : F\to \{ 1,\dots,k\}$ the intersection $\bigcap_{i\in F} A_{i,\omega(i)}$ is nonempty.
If a group $G$ is acting on $X$ then given a tuple $(A_1 ,\dots,A_k )$ of subsets of $X$
we say that a set $J\subseteq G$ is an {\it independence set for $(A_1 ,\dots,A_k )$}
if the collection $\{ (s^{-1} A_1 ,\dots ,s^{-1} A_k) \}_{s\in J}$ is independent.
In the case of an action of a countable amenable group $G$ on a compact Hausdorff space $X$,
the {\it independence density} $I(\oA)$ of a tuple $\oA = (A_1 ,\dots, A_k )$ of subsets of $X$
is defined as the limit of $\varphi_{\oA}(F) /|F|$ as
the nonempty finite set $F\subseteq G$ becomes more and more left invariant,
where $\varphi_{\oA}(F)$ denotes the maximum of the cardinalities of the independence sets for $\oA$
which are contained in $F$ \cite[Prop.\ 3.23]{KerLi07}.
We then say that a tuple $(x_1 , \dots , x_k ) \in X^k$ is an {\it IE-tuple} if for every product neighbourhood
$U_1 \times\cdots\times U_k$ of $(x_1 , \dots , x_k )$ the tuple $\oU = (U_1 ,\dots, U_k )$ satisfies $I(\oU ) > 0$.
This condition on $\oU$ is equivalent to the existence
of an independence set $J\subseteq G$ for $\oU$ which has positive density with respect to
a given tempered F{\o}lner sequence $\{ F_i \}_{i\in \Nb}$ in the sense that
$\lim_{i\to\infty}  |F_i \cap J|/|F_i| > 0$.
It turns out that a nondiagonal tuple is an IE-tuple if and
only if it is an entropy tuple \cite[Sect.\ 3]{KerLi07}.
For general acting $G$ we define IT-tuples in the same way as IE-tuples except that
we ask instead for the independence set to have infinite cardinality.
Versions of IE-tuples and IT-tuples can also be defined for probability-measure-preserving actions
by requiring the condition on the independence set to hold whenever we remove small parts
of the sets $s^{-1}U_1, \dots, s^{-1}U_k$ for each $s\in G$.
The following are some of the results relating independence, weak mixing, and entropy that
were established in \cite{KerLi07,KerLi09}.
\begin{enumerate}
\item A continuous action $G\curvearrowright X$ of an Abelian group on a compact Hausdorff space
is (topologically) weakly mixing if and only if every tuple of points in $X$ is an IT-tuple
(in which case the action is said to be {\it uniformly untame of all orders}).

\item A probability-measure-preserving action $G\curvearrowright (X,\mu )$ of an
arbitrary group is weakly mixing if and only if its
universal topological model is uniformly untame of all orders.

\item A continuous action $G\curvearrowright X$ of a discrete amenable group on a compact Hausdorff space
has positive topological entropy if and only if it has a nondiagonal IE-pair
(for $G=\Zb$ this was first proved by Huang and Ye in \cite{HuaYe06} using measure-theoretic techniques).
Moreover, the action has uniformly positive entropy
if and only if every pair of points in $X$ is an IE-pair, and uniformly positive entropy of all orders
if and only if every tuple of points in $X$ is an IE-tuple.

\item A probability-measure-preserving action $G\curvearrowright (X,\mu )$ of a discrete
amenable group has positive measure entropy if and only if there is a nondiagonal
measure IE-pair in some topological model.
Moreover, the action has complete positive entropy if and only if every tuple of points is an IE-tuple
in the universal topological model  (for $G=\Zb$ this was proved
by Glasner and Weiss in \cite{GlaWei95}).
\end{enumerate}
Chung and the second author applied IE-tuples in \cite{ChuLi11} as part of a new approach
for studying the relation between homoclinicity and entropy in expansive algebraic actions
that enabled them to break the commutativity barrier and establish some duality-type equivalences
for polycyclic-by-finite acting groups. In this case, and more generally
for actions of a countable amenable group on a compact group
$X$ by automorphisms, the analysis of IE-tuples is governed by a single closed invariant normal
subgroup of $X$ called the {\it IE group} \cite[Sect.\ 7]{ChuLi11}.

Recent seminal work of Bowen in \cite{Bow10} has expanded
the scope of the classical theory of entropy for actions of discrete amenable groups
to the much broader realm of sofic acting groups.
For a countable group $G$, soficity can be expressed as the existence of a sequence
$\Sigma = \{ \sigma_i : G\to\Sym (d_i ) \}$ of maps from $G$ into finite permutation groups
which is asymptotically multiplicative and free in the sense that
\begin{enumerate}
\item[(i)] $\displaystyle\lim_{i\to\infty} \frac{1}{d_i}
\big| \{ k\in \{ 1,\dots ,d_i \} : \sigma_{i,st} (k) = \sigma_{i,s} \sigma_{i,t} (k) \} \big| = 1$
for all $s,t\in G$, and

\item[(ii)] $\displaystyle\lim_{i\to\infty} \frac{1}{d_i} \big| \{ k\in \{ 1,\dots ,d_i \} : \sigma_{i,s} (k) \neq \sigma_{i,t} (k) \} \big| = 1$
for all distinct $s,t\in G$.
\end{enumerate}
Such a sequence for which $\lim_{i\to\infty} d_i = \infty$ we call a {\it sofic approximation sequence}.
By measuring the asymptotic exponential growth of dynamical models which are compatible
with a fixed sofic approximation sequence, Bowen defined in \cite{Bow10} a collection of invariants
for probability-measure-preserving actions of a countable sofic group admitting a generating
partition with finite Shannon entropy. A remarkable application of this sofic measure entropy
was a far-reaching extension of the Ornstein-Weiss classification of Bernoulli actions of amenable groups.

The authors developed in \cite{KerLi11} a more general operator-algebraic approach to sofic entropy
that enables one to remove the generator hypothesis (see also \cite{Ker12} for a
formulation in terms of finite partitions). This led to a sofic version of topological entropy
as well as a variational principle relating it to sofic measure entropy.
We used this variational principle to compute the sofic topological
entropy of a principal algebraic action $G\curvearrowright \widehat{\Zb G/\Zb G f}$ of a
countable residually finite group in the case that
the sofic approximation sequence arises from finite quotients of $G$ and $f$ is invertible in the full group
C$^*$-algebra $C^* (G)$. In line with previous work on algebraic actions \cite{LinSchWar90,Den06,Li12,Bow11}
(see also the more recent \cite{LiTho12}),
this value turns out to be equal to the logarithm of the Fuglede-Kadison determinant of $f$ in the group
von Neumann algebra $\cL G$. We also showed how topological entropy can be used to give a
proof that Gottschalk's surjunctivity conjecture holds for countable sofic groups, a result
originally established by Gromov in \cite{Gro99}, where the idea of soficity itself first appeared.

In the present work we initiate a local analysis of independence as it connects to topological entropy
within this broadened framework of actions of sofic groups. Given a continuous action $G\curvearrowright X$
of a countable sofic group and a sofic approximation sequence $\Sigma =  \{ \sigma_i : G\to\Sym (d_i ) \}$ for $G$,
we define the notion of a $\Sigma$-IE-tuple by externalizing the positive independence density condition
in the amenable case to the finite sets $\{ 1,\dots , d_i \}$ appearing in the sequence $\Sigma$
(Definition~\ref{D-Sigma-IE}). We show in Section~\ref{S-Sigma} that
$\Sigma$-IE-tuples share many of the same properties as IE-tuples for actions of discrete
amenable groups. In particular, the action $G\curvearrowright X$ has positive entropy with respect to $\Sigma$
if and only if there is a nondiagonal $\Sigma$-IE-pair in $X\times X$. On the other hand, we do not know
whether the product formula holds in general for $\Sigma$-IE-tuples. However, granted that we use a free
ultrafilter $\fF$ over $\Nb$ to express the independence density condition in the definition of $\Sigma$-IE-tuples,
we demonstrate in Theorem~\ref{T-ergodic to product} that the product formula holds
under the assumption of ergodicity on the action of the commutant of $G$
inside the group of measure-preserving automorphisms of the Loeb space $\prod_\fF \{ 1,\dots ,d_i \}$ which arise
from permutations of the sets $\{ 1,\dots ,d_i \}$. We then prove that this commutant acts ergodically
when $G$ is residually finite and $\Sigma$ is built from finite quotients of $G$ (Theorem~\ref{T-rf ergodic}), and also
when $G$ is amenable and $\Sigma$ is arbitrary (Theorem~\ref{T-amenable ergodic}).
In the case that  $G$ is nonamenable, a combination of results of Elek and Szabo  \cite[Thm.\ 2]{EleSza11} and
Paunescu \cite{Pau11} shows that there exist $\Sigma$ for which the ergodicity condition fails.

The definition of IE-tuples for amenable $G$, as given in \cite{KerLi07}, involves an asymptotic
density condition over finite subsets of $G$ which become more and more invariant.
Although density in this sense loses its meaning in the nonamenable case,
we might nevertheless ask what the external independence density
in the definition of $\Sigma$-IE-tuples implies about the degree of independent behaviour across
orbits in $X$. We observe in Proposition~\ref{P-sofic IE to orbit IE}
that every $\Sigma$-IE-tuple (and more generally every
sofic IE-tuple as defined in Definition~\ref{D-Sigma-IE}) is an {\it orbit IE-tuple}, by which we mean
that for every product neighbourhood $U_1 \times\cdots\times U_k$ of the given tuple
$(x_1,\dots,x_k)$ in $X^k$, the tuple $(U_1 , \dots , U_k )$
has positive independence density over $G$ in the sense that
there is a $q>0$ such that every finite set $F\subseteq G$ has a subset of cardinality at least $q|F|$
which is an independence set for $(U_1 , \dots , U_k )$ (note that this definition makes sense
for any acting group $G$). We show moreover in Theorem~\ref{T-amenable case: orbit IE=IE} that, for amenable $G$,
$\Sigma$-IE-tuples, IE-tuples, and orbit IE-tuples are all the same thing. This puts us in the pleasant
and somewhat surprising situation that IE-tuples can be identified by a density condition that does
not structurally depend on amenability for its formulation,
and raises the question about the relation between entropy and orbit IE-tuples for nonamenable sofic $G$.
In another direction, Theorem~\ref{T-orbit IE to nontame} asserts that if a tuple of subsets of $X$
has positive independence density over $G$ then it has an infinite independence set in $G$,
which implies that every orbit IE-tuple is an IT-tuple.

By a theorem of Chung and the second author in \cite{ChuLi11}, an algebraic action of
a countable group $G$ is expansive if and only if it is either the dual action
$G\curvearrowright X_A := \widehat{(\Zb G)^n /(\Zb G)^n A}$
for some $n\in\Nb$ and matrix $A\in M_n (\Zb G)$ which is invertible in $M_n (\ell^1 (G))$,
or the restriction of such an action to a closed $G$-invariant subgroup of $X_A$.
In the same paper it is shown that an expansive algebraic action $G\curvearrowright X$
of a polycyclic-by-finite group has completely positive entropy with respect to the Haar measure
prescisely when the IE group is equal to $X$, which is equivalent to every tuple of points
in $X$ being an IE-tuple. It is also shown that, when $G$ is amenable, every action of the form
$G\curvearrowright X_A$ with $A$ invertible in $M_n (\ell^1 (G))$ has the property that every
tuple of points in $X$ is an IE-tuple (see Lemma~5.4 and Theorems~7.3 and 7.8 in \cite{ChuLi11}).
We prove in Theorem~\ref{T-invertible to UPE} that if $G$ is a countable
sofic group, $n\in\Nb$, and $A$ is a matrix in $M_n (\Zb G)$ which is invertible in $M_n (\ell^1 (G))$,
then the algebraic action $G\curvearrowright X_A$ has the property that every tuple of points
in $X_A$ is a $\Sigma$-IE-tuple for every sofic approximation sequence $\Sigma$.
We use this to answer a question of Deninger in the case that $G$ is residually finite by
combining it with an argument from \cite{ChuLi11} and the entropy computation for
principal algebraic actions from \cite{KerLi11} mentioned above to
deduce that if $f$ is an element of $\Zb G$ which is invertible in $\ell^1 (G)$
and has no left inverse in $\Zb G$ then the Fuglede-Kadison determinant of $f$ satisfies
$\det_{\cL G} f > 1$ (Corollary~\ref{C-answer to Deninger}). Deninger asked whether this holds for all countable
groups \cite[Question 26]{Den09}, and affirmative answers were given in \cite{DenSch07} for residually finite amenable $G$
and more generally in \cite{ChuLi11} for amenable $G$.

For a continuous action $G\curvearrowright X$ of a countably infinite group on a compact metrizable
space with compatible metric $\rho$, we say that a pair $(x, y)\in X\times X$ is a {\it Li-Yorke pair} if
\[
\limsup_{G \ni s\to \infty}\rho(sx, sy)>0 \hspace*{5mm}\text{and} \hspace*{5mm}
\liminf_{G \ni s\to \infty}\rho(sx, sy)=0 ,
\]
where the limit supremum and limit infimum
mean the limits of $\sup_{s\in G\setminus F} \rho(sx, sy)$ and\linebreak $\inf_{s\in G\setminus F} \rho(sx, sy)$, respectively,
over the net of finite subsets $F$ of $G$.
Note that the definition of Li-Yorke pair does not depend on the choice of the metric $\rho$.
The action $G\curvearrowright X$ is said to be {\it Li-Yorke chaotic} if there is an uncountable subset
$Z$ of $X$ such that every nondiagonal pair $(x, y)$ in $Z\times Z$ is a Li-Yorke pair.
The notion of Li-Yorke chaos stems from \cite{LiYor75}.
In the case of a continuous map $T:X\to X$, a theorem
Blanchard, Glasner, Kolyada, and Maass in \cite{BlaGlaKolMaa02} states that
positive entropy implies Li-Yorke chaos. In \cite{KerLi07} the authors strengthened
this by showing that for every $k\geq 2$ and product neighbourhood $U_1\times\cdots\times U_k$
of a nondiagonal IE-tuple $(x_1,\dots,x_k)\in X^k$ there are Cantor sets $Z_i \subseteq U_i$
for $i=1,\dots,k$ such that
\begin{enumerate}
\item every nonempty tuple of points in $\bigcup_i Z_i$ is an IE-tuple, and

\item for all $m\in\Nb$, distinct $y_1, \dots ,y_m\in \bigcup_i Z_i$, and $y_1' ,\dots , y_m'\in \bigcup_i Z_i$
one has
\[
\liminf_{n\to\infty} \max_{1\leq i\leq m} \rho (T^n y_i,y_i' ) = 0 .
\]
\end{enumerate}
In Theorem~\ref{T-positive entropy to chaos} we show that a similar result holds when
$G$ is sofic and IE-tuples are replaced by $\Sigma$-IE-tuples as defined with respect to a
free ultrafilter $\fF$ on $\Nb$, where $\Sigma$ is any sofic approximation sequence for $G$.
Using $\fF$ in the definition of entropy, we deduce that if the action has positive entropy
for some $\Sigma$ then it is Li-Yorke chaotic. As a corollary, if the action $G\curvearrowright X$ is distal then
$h_\Sigma (X,G)=0$ or $-\infty$. In particular, when $G$ is amenable every distal action $G\curvearrowright X$ has zero entropy,
which is well known in the case $G=\Zb$ \cite{Parry}.

The following diagram illustrates how some of the main results of the paper
relate various properties of actions
of a countable discrete group $G$ on a compact metrizable space $X$,
which we assume to have more than one point.
In the left column we assume that $G$ is sofic and that $\Sigma$ is a fixed but
arbitrary sofic approximation sequence. The unlabeled implications are trivial.
By pair we mean an element of $X\times X$.
See \cite{KerLi07} for terminology related to tameness and nullness.

\vspace*{1.5mm}
\[
\footnotesize
\xymatrix{
\txt{uniformly positive\\ entropy w.r.t.\ $\Sigma$} \ar@{<=>}[d]^-*+{\text{\tiny\ref{R-entropy tuple}}} &&& \\
\txt{every pair\\ is a $\Sigma$-IE pair} \ar@=[d]^-*+{\text{\tiny\ref{P-basic}(3)}}\ar@=[r]^-*+{\text{\tiny\ref{P-sofic IE to orbit IE}}} &
\txt{every pair\\ is an\\ orbit IE-pair}\ar@=[r]^-*+{\text{\tiny\ref{C-orbit IE to IT}}} &
\txt{uniformly untame\\ (every pair\\ is an IT-pair)} \ar@=[d]^-*+{\text{\tiny 6.4(2) in \cite{KerLi07}}} \ar@=[r] &
\txt{uniformly nonnull\\ (every pair\\ is an IN-pair)} \ar@=[d]^-*+{\text{\tiny 5.4(2) in \cite{KerLi07}}}  \\
\txt{positive entropy\\ w.r.t.\ $\Sigma$} \ar@{<=>}[d]^-*+{\text{\tiny\ref{P-basic}(3)}}&&
\txt{untame} \ar@{<=>}[d]^-*+{\text{\tiny 6.4(2) in \cite{KerLi07}}} &
\txt{nonnull} \ar@{<=>}[d]^-*+{\text{\tiny 5.4(2) in \cite{KerLi07}}} &\\
\txt{$\exists$ nondiag.\\ $\Sigma$-IE-pair} \ar@=[d]^-*+{\text{\tiny\ref{T-positive entropy to chaos}}} \ar@=[r]^-*+{\text{\tiny\ref{P-sofic IE to orbit IE}}}&
\txt{$\exists$ nondiag.\\ orbit IE-pair}\ar@=[r]^-*+{\text{\tiny\ref{C-orbit IE to IT}}} &
\txt{$\exists$ nondiag.\\ IT-pair} \ar@=[r] &
\txt{$\exists$ nondiag.\\ IN-pair} \\
\txt{Li-Yorke\\ chaotic} &&&
}
\]
\vspace*{3.5mm}

The organization of the paper is as follows.
In Section~\ref{S-entropy} we set up some basic notation and
review sofic topological entropy. In Section~\ref{S-orbit} we introduce
orbit IE-tuples and prove a product formula for them.
Section~\ref{S-Sigma} introduces $\Sigma$-IE-tuples and includes our results relating
them to orbit IE-tuples.
In Section~\ref{S-product} we focus on the product formula for $\Sigma$-IE-tuples
and the question of ergodicity for the action of $G'$ on the Loeb space.
Section~\ref{S-algebraic} contains the material on algebraic actions.
In Section~\ref{S-untame} we prove that positive independence density for a tuple
of subsets implies the existence of an infinite independence set, showing that
orbit IE-tuples are IT-tuples.
Finally in Section~\ref{S-chaos} we establish the theorem connecting
independence and entropy to Li-Yorke chaos in the sofic framework.
\medskip

\noindent{\it Acknowledgements.}
The first author was partially supported by NSF grants DMS-0900938 and DMS-1162309.
The second author was partially supported by NSF grant DMS-1001625. We are grateful to
Wen Huang, Liviu Paunescu and Xiangdong Ye for helpful comments.

\section{Sofic topological entropy}\label{S-entropy}

We review here the definition of sofic topological entropy \cite{KerLi11,KerLi10}
and in the process introduce some of the basic notation
and terminology appearing throughout the paper. Our approach will bypass
the operator algebra technology that appears in \cite{KerLi11,KerLi10}.

Let $Y$ be a set equipped with a pseudometric $\rho$ and let $\eps\geq 0$. A set $A\subseteq Y$ is said
to be {\it $(\rho ,\eps )$-separated} if $\rho (x,y) \geq \eps$ for all distinct $x,y\in A$.
Write $N_\eps (Y, \rho )$ for the maximum cardinality of a $(\rho ,\eps )$-separated subset of $Y$.

Let $G\curvearrowright X$ be a continuous action of a countable sofic group on a compact metrizable space.
Let $\Sigma = \{ \sigma_i : G\to\Sym (d_i) \}$ be a sofic approximation sequence for $G$,
meaning that
\begin{enumerate}
\item[(i)] $\displaystyle\lim_{i\to\infty} \frac{1}{d_i}
\big| \{ k\in \{ 1,\dots ,d_i \} : \sigma_{i,st} (k) = \sigma_{i,s} \sigma_{i,t} (k) \} \big| = 1$
for all $s,t\in G$,

\item[(ii)] $\displaystyle\lim_{i\to\infty} \frac{1}{d_i} \big| \{ k\in \{ 1,\dots ,d_i \} : \sigma_{i,s} (k) \neq \sigma_{i,t} (k) \} \big| = 1$
for all distinct $s,t\in G$,
\end{enumerate}
and $d_i \to\infty$ as $i\to\infty$. Depending on the situation, for $a\in\{1,\dots ,d_i \}$ we may write $\sigma_{i,s} (a)$,
$\sigma_i (s)a$, or $sa$ to denote the image of $a$ under the evaluation of $\sigma_i$ at $s$.

Let $\rho$ be a continuous pseudometric on $X$.
For a given $d\in\Nb$, we define on the set of all maps from $\{ 1,\dots ,d\}$ to $X$ the pseudometrics
\begin{align*}
\rho_2 (\varphi , \psi ) &= \bigg( \frac{1}{d} \sum_{a=1}^d (\rho (\varphi (a),\psi (a)))^2 \bigg)^{1/2} , \\
\rho_\infty (\varphi ,\psi ) &= \max_{a=1,\dots ,d} \rho (\varphi (a),\psi (a)) .
\end{align*}

\begin{definition}\label{D-map top}
Let $F$ be a nonempty finite subset of $G$ and $\delta > 0$.
Let $\sigma$ be a map from $G$ to $\Sym (d)$ for some $d\in\Nb$.
Define $\Map (\rho ,F,\delta ,\sigma )$ to be the set of all maps $\varphi : \{ 1,\dots ,d\} \to X$ such that
$\rho_2 (\varphi\sigma_s , \alpha_s \varphi ) < \delta$ for all $s\in F$, where $\alpha_s$ is the transformation
$x\mapsto sx$ of $X$.
\end{definition}

\begin{definition}
Let $F$ be a nonempty finite subset of $G$ and $\delta > 0$. For $\eps > 0$ define
\begin{align*}
h_{\Sigma ,2}^\eps (\rho ,F, \delta ) &=
\limsup_{i\to\infty} \frac{1}{d_i} \log N_\eps (\Map (\rho ,F,\delta ,\sigma_i ),\rho_2 ) ,\\
h_{\Sigma ,2}^\eps (\rho ,F) &= \inf_{\delta > 0} h_{\Sigma ,2}^\eps (\rho ,F,\delta ) ,\\
h_{\Sigma ,2}^\eps (\rho ) &= \inf_{F} h_{\Sigma ,2}^\eps (\rho ,F) ,\\
h_{\Sigma ,2} (\rho ) &= \sup_{\eps > 0} h_{\Sigma ,2}^\eps (\rho ) ,
\end{align*}
where $F$ in the third line ranges over the nonempty finite subsets of $G$. In the case that
$\Map (\rho ,F,\delta ,\sigma_i )$ is empty for all sufficiently large $i$, we set
$h_{\Sigma ,2}^\eps (\rho ,F, \delta ) = -\infty$.
We similarly define $h_{\Sigma ,\infty}^\eps (\rho ,F, \delta )$, $h_{\Sigma ,\infty}^\eps (\rho ,F)$,
$h_{\Sigma ,\infty}^\eps (\rho )$ and $h_{\Sigma ,\infty} (\rho )$
using $N_\eps(\cdot, \rho_\infty)$ in place of $N_\eps(\cdot, \rho_2)$.
\end{definition}

Instead of the limit supremum above we could have taken a limit over a fixed
free ultrafilter on $\Nb$, whose utility is apparent for example if we wish to have
a product formula (see Section~\ref{S-product}).
We will also use this variant in Section~ \ref{S-chaos}.

The pseudometric $\rho$ is said to be {\it dynamically generating} if
for every pair of distinct points $x,y\in X$ there is an $s\in G$ such that $\rho(sx, sy)>0$.

\begin{lemma} \label{L-change pseudometric}
Suppose that $\rho$ and $\rho'$ are continuous pseudometrics on $X$
and that $\rho'$ is dynamically generating.
Let $F$ be a nonempty finite subset of $G$ and $\delta>0$. Then there exist a nonempty finite subset
$F'$ of $G$ and $\delta'>0$ such that for any $d\in \Nb$ and sufficiently good sofic approximation
$\sigma: G\to \Sym(d)$ one has $\Map(\rho', F', \delta', \sigma)\subseteq \Map(\rho, F, \delta, \sigma)$.
\end{lemma}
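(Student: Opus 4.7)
The plan is to translate the dynamical generating hypothesis on $\rho'$ into a compactness-based pointwise implication (``$\rho'$ small on finitely many translates implies $\rho$ small'') and then combine this with the sofic approximation axioms and Chebyshev's inequality to turn an $F'$-approximation with respect to $\rho'$ into an $F$-approximation with respect to $\rho$. The preliminary ingredient is the claim that for every $\eta>0$ there exist a nonempty finite $F''\subseteq G$ and $\kappa>0$ such that whenever $x,y\in X$ satisfy $\rho'(tx,ty)<\kappa$ for all $t\in F''$, one has $\rho(x,y)<\eta$. If this failed, I would extract from exhausting finite subsets $F_n''\uparrow G$ and $\kappa_n\downarrow 0$ pairs $(x_n,y_n)$ with $\rho'(tx_n,ty_n)<\kappa_n$ for $t\in F_n''$ but $\rho(x_n,y_n)\geq\eta$; a subsequential limit $(x,y)\in X\times X$ would then satisfy $\rho(x,y)\geq\eta>0$ (so $x\neq y$) yet $\rho'(tx,ty)=0$ for every $t\in G$, contradicting that $\rho'$ is dynamically generating.

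Given $F$ and $\delta$, I would pick $\eta,\varepsilon>0$ with $\eta^2+\varepsilon\cdot\diam_\rho(X)^2<\delta^2$, apply the compactness claim to obtain the finite set $F''$ and the constant $\kappa>0$, set $F'=F''\cup F''F$, and choose $\delta'>0$ small enough that $9(\delta')^2/\kappa^2<\varepsilon/(3|F''|)$. A ``sufficiently good'' sofic approximation $\sigma:G\to\Sym(d)$ is then one for which $|\{a:\sigma_{ts}(a)\neq\sigma_t\sigma_s(a)\}|/d<\varepsilon/(3|F''|)$ for every $(t,s)\in F''\times F$.

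To see that any $\varphi\in\Map(\rho',F',\delta',\sigma)$ lies in $\Map(\rho,F,\delta,\sigma)$, fix $s\in F$. By the contrapositive of the compactness claim, $\rho(\varphi(\sigma_s(a)),s\varphi(a))\geq\eta$ forces $\rho'(t\varphi(\sigma_s(a)),ts\varphi(a))\geq\kappa$ for some $t\in F''$. Decomposing the latter quantity through the intermediate points $\varphi(\sigma_t\sigma_s(a))$ and $\varphi(\sigma_{ts}(a))$ via the triangle inequality, at least one of the three resulting summands must be $\geq\kappa/3$; the first is bounded on most $a$ by Chebyshev applied to $\rho'_2(\alpha_t\varphi,\varphi\sigma_t)<\delta'$ (after the bijective change of variables $b=\sigma_s(a)$, legitimate since $t\in F''\subseteq F'$), the middle is nonzero only on the sofic-error set for the pair $(t,s)$, and the third is bounded by Chebyshev applied to $\rho'_2(\varphi\sigma_{ts},\alpha_{ts}\varphi)<\delta'$ (using $ts\in F''F\subseteq F'$). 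Summing over $t\in F''$ controls the density of bad $a$'s by $\varepsilon$, and splitting $\rho_2(\varphi\sigma_s,\alpha_s\varphi)^2$ into contributions from the good and bad $a$'s then yields the bound $\eta^2+\varepsilon\cdot\diam_\rho(X)^2<\delta^2$, as desired.

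The main technical hurdle is bookkeeping: one must choose $\eta,\varepsilon$ from $\delta$, then $F'',\kappa$ from $\eta$ via the compactness claim, then $\delta'$ from $\varepsilon,\kappa,|F''|$, and finally specify the required multiplicativity tolerance for $\sigma$, all in the correct dependency order so that the three Chebyshev/sofic estimates jointly force the bad density below $\varepsilon$.
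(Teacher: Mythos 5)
Your proposal is correct and follows essentially the same route as the paper: both reduce to the compactness fact that finitely many $\rho'$-translates control $\rho$, take $F'=F''\cup F''F$, and combine Chebyshev-type estimates with the sofic multiplicativity to bound the density of bad indices. The only cosmetic difference is that you prove the preliminary claim by a direct compactness/contradiction argument, whereas the paper derives it from the compatible metric $\rho''(x,y)=\sum_k 2^{-k}\rho'(s_kx,s_ky)$.
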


\begin{proof}
List the elements of $G$ as $s_1, s_2, \dots$. Since $\rho'$ is dynamically generating, we have the
compatible metric $\rho''$ on $X$ defined by
\[
\rho''(x, y)=\sum_{k=1}^{\infty}\frac{1}{2^k}\rho'(s_kx, s_ky).
\]
It follows that there are a nonempty finite subset $F''$ of $G$ and a $\delta''>0$ such that, for all $x,y\in X$,
if $\max_{s\in F''}\rho'(sx, sy)<\delta''$ then $\rho(x, y)<\delta/2$.
Set $F'=F''\cup (F''F)$. Let $\delta'>0$ and let $\sigma$ be a map from $G$ to $\Sym(d)$
for some $d\in \Nb$. Let $\varphi\in \Map(\rho', F', \delta', \sigma)$. Then
\begin{align*}
|\{a\in \{1, \dots, d\}: \rho'(s_1s_2\varphi(a), &\varphi((s_1s_2)a))<\sqrt{\delta'} \text{ and}\\
\rho'(s_1 \varphi &(s_2a), \varphi(s_1(s_2a)))<\sqrt{\delta'}
\text{ for all } s_1\in F'', s_2\in F\}|\\
&\ge (1-2|F''| |F|\delta')d.
\end{align*}
Suppose that $2\sqrt{\delta'}<\delta''$ and $\sigma$ is a good enough sofic approximation for $G$ so that
\[
|\{a\in \{1, \dots, d\}: (s_1s_2)a=s_1(s_2a) \text{ for all } s_1\in F'', s_2\in F\}|\ge (1-\delta')d.
\]
Then
\begin{align*}
\lefteqn{|\{a\in \{1, \dots, d\}: \rho(s\varphi(a), \varphi(sa))<\delta/2 \text{ for all } s\in F\}|}\hspace*{15mm} \\
\hspace*{10mm} &\ge |\{a\in \{1, \dots, d\}: \rho'(s_1s_2\varphi(a), s_1\varphi(s_2a))<2\sqrt{\delta'} \text{ for all } s_1\in F'', s_2\in F\}|\\
&\ge (1-(1+2|F''| |F|)\delta')d.
\end{align*}
It follows that when $\delta'$ is small enough independently of $d$ and $\sigma$, one has $\varphi\in \Map(\rho, F, \delta, \sigma)$.
\end{proof}

The following proposition is contained in Proposition~2.4 of \cite{KerLi10}, whose
statement and proof use the operator-algebraic formulation of sofic topological entropy from \cite{KerLi11}.

\begin{proposition}\label{P-pseudometric}
Let $\rho$ and $\rho'$ be continuous pseudometrics on $X$ which are dynamically generating. Then
\[
h_{\Sigma ,2} (\rho ) = h_{\Sigma ,2} (\rho') = h_{\Sigma ,\infty} (\rho ) = h_{\Sigma ,\infty} (\rho') .
\]
\end{proposition}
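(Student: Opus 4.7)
The plan is to reduce the four-way equality to two elementary inequalities, applied in both directions. Call them \textbf{Block (A)}: $h_{\Sigma,2}(\tilde\rho)\le h_{\Sigma,\infty}(\tilde\rho)$ for any continuous pseudometric $\tilde\rho$; and \textbf{Block (B)}: $h_{\Sigma,\infty}(\tilde\rho_1)\le h_{\Sigma,2}(\tilde\rho_2)$ for any two dynamically generating pseudometrics $\tilde\rho_1,\tilde\rho_2$. Applying (A) and (B) with $(\tilde\rho_1,\tilde\rho_2)=(\rho,\rho')$ and then with the roles of $\rho$ and $\rho'$ reversed gives the cyclic chain
\[
h_{\Sigma,2}(\rho)\le h_{\Sigma,\infty}(\rho)\le h_{\Sigma,2}(\rho')\le h_{\Sigma,\infty}(\rho')\le h_{\Sigma,2}(\rho),
\]
forcing all four quantities to agree.

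Block (A) is immediate: since $\rho_2\le\rho_\infty$ pointwise on pairs of maps $\{1,\dots,d\}\to X$, any $(\rho_2,\epsilon)$-separated set is $(\rho_\infty,\epsilon)$-separated, so $N_\epsilon(\Map,\rho_2)\le N_\epsilon(\Map,\rho_\infty)$, and the inequality follows after $\limsup_i$, $\inf_{F,\delta}$, and $\sup_\epsilon$.

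For Block (B), I would first apply Lemma~\ref{L-change pseudometric} with $\tilde\rho_2$ in the role of the continuous pseudometric and $\tilde\rho_1$ in the role of the dynamically generating one: given any $F_2,\delta_2$, there exist $F_1,\delta_1$ such that $\Map(\tilde\rho_1,F_1,\delta_1,\sigma)\subseteq\Map(\tilde\rho_2,F_2,\delta_2,\sigma)$ for all sufficiently good $\sigma$. Hence $N_\epsilon(\Map(\tilde\rho_1,F_1,\delta_1,\sigma),(\tilde\rho_1)_\infty)\le N_\epsilon(\Map(\tilde\rho_2,F_2,\delta_2,\sigma),(\tilde\rho_1)_\infty)$, reducing matters to counting $(\tilde\rho_1)_\infty$-$\epsilon$-separated points inside the $\tilde\rho_2$-map space. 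The core step is then the packing estimate
\[
N_\epsilon\bigl(\Map(\tilde\rho_2,F_2,\delta_2,\sigma),(\tilde\rho_1)_\infty\bigr)\le C(\epsilon')^d\cdot N_{\epsilon'}\bigl(\Map(\tilde\rho_2,F_2,\delta_2,\sigma),(\tilde\rho_2)_2\bigr)
\]
with $\log C(\epsilon')\to 0$ as $\epsilon'\to 0$. To prove it, cover the $\tilde\rho_2$-map space by $(\tilde\rho_2)_2$-balls of radius $\epsilon'$ on a maximal $((\tilde\rho_2)_2,\epsilon')$-separated set (contributing the second factor) and bound by $C(\epsilon')^d$ the number of $(\tilde\rho_1)_\infty$-$\epsilon$-separated elements per ball. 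The latter uses three inputs: (i) dynamical generation of $\tilde\rho_2$ furnishes a finite $F'\subseteq G$ and $\eta>0$ with $\tilde\rho_1(x,y)\ge\epsilon\Rightarrow\max_{s\in F'}\tilde\rho_2(sx,sy)\ge\eta$, which together with approximate $\tilde\rho_2$-equivariance (arranged by $F_2\supseteq F'$ and $\delta_2$ small) promotes $(\tilde\rho_1)_\infty$-$\epsilon$-separation of $\varphi,\psi$ into a coordinate $b$ with $\tilde\rho_2(\varphi(b),\psi(b))\ge\eta/2$; (ii) Chebyshev forces $b$ into a defect set (relative to the ball center $\varphi_0$) of size at most $K:=128(\epsilon')^2 d/\eta^2$; (iii) encoding each ball member by its defect set ($\binom{d}{K}$ choices) together with its $\tilde\rho_2$-values there in a finite $(\eta/16)$-net $E$ of $X$ ($|E|^K$ choices) is injective on the $(\tilde\rho_1)_\infty$-$\epsilon$-separated elements. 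Stirling gives $\log C(\epsilon')=O((\epsilon')^2\log(\epsilon')^{-1}/\eta^2)\to 0$.

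The main obstacle is verifying injectivity of the defect-set encoding in (iii), which requires the coordinate $b$ from (i) to fall inside the defect set of at least one of $\varphi,\psi$ relative to $\varphi_0$ and the net rounding there to preserve the $\tilde\rho_2$-distinction of size $\ge\eta/2$; this forces a careful coupling of $\eta$, $\epsilon'$, and the net granularity. Once the packing bound is established, combining it with step one and taking $(1/d)\log$, $\limsup_i$, $\inf_{F_2,\delta_2}$, and $\sup_\epsilon$ yields $h_{\Sigma,\infty}^\epsilon(\tilde\rho_1)\le h_{\Sigma,2}(\tilde\rho_2)+\log C(\epsilon')$; letting $\epsilon'\to 0$ and then $\sup_\epsilon$ delivers Block (B) and completes the proof.
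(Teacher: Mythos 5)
Your reduction to Blocks (A) and (B) is a legitimate alternative factorization of the statement, and Block (A) is correct and identical to the paper's first step. But note what the different factorization costs. The paper never proves the cross inequality $h_{\Sigma,\infty}(\tilde\rho_1)\le h_{\Sigma,2}(\tilde\rho_2)$ in one shot: it proves $h_{\Sigma,\infty}(\rho)\le h_{\Sigma,2}(\rho)$ for a \emph{single} pseudometric (where the packing/encoding argument needs only Chebyshev and a net, and no orbit argument), and separately $h_{\Sigma,2}(\rho')\le h_{\Sigma,2}(\rho)$ (where, because the target is an $\ell^2$ quantity, the small exceptional set produced by the orbit/dynamical-generation argument contributes only a term of size $3\eps'|K|$ to $(\rho'_2)^2$, so one gets a direct injection of separated sets with \emph{no} exponential error factor and no counting at all). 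Your Block (B) forces the two difficulties to be handled simultaneously, and that is exactly where the sketch develops a gap.

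The gap is in the passage from step (i) to steps (ii)--(iii). You claim that $(\tilde\rho_1)_\infty$-separation of $\varphi,\psi$ together with approximate $\tilde\rho_2$-equivariance produces a coordinate $b$ with $\tilde\rho_2(\varphi(b),\psi(b))\ge\eta/2$. Approximate equivariance is only an $\ell^2$-average condition; at the particular coordinate $a$ realizing $\tilde\rho_1(\varphi(a),\psi(a))\ge\eps$ the equivariance of $\varphi$ or of $\psi$ may fail outright, in which case no such $b$ need exist and the pair leaves no trace in your encoding. Since a single such coordinate already forces $(\tilde\rho_1)_\infty(\varphi,\psi)\ge\eps$, the smallness of the equivariance defect set does not rescue injectivity for an $\ell^\infty$ separation. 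To repair this you must enlarge the recorded set to include, for each $\psi$, the coordinates $a$ for which some $s\in F'$ has $\tilde\rho_2(s\psi(a),\psi(\sigma_s a))\ge\sqrt{\delta_2}$ (and the analogous set for the center $\varphi_0$), together with $\bigcup_{s\in F'}\sigma_s^{-1}$ of the $\tilde\rho_2$-defect set relative to $\varphi_0$ --- you record values on the defect set $D$ itself, but what must be reconstructed is $\psi(a)$ for every $a$ whose $F'$-image meets $D$. Relatedly, the rounding on the recorded set must be taken in a finite net for $\tilde\rho_1$ (or for a compatible metric), not in an $(\eta/16)$-net for $\tilde\rho_2$: knowing that $\psi_1(a)$ and $\psi_2(a)$ are $\tilde\rho_2$-close says nothing about $\tilde\rho_1(\psi_1(a),\psi_2(a))$, which is what injectivity on a $((\tilde\rho_1)_\infty,\eps)$-separated set requires. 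With these corrections the recorded set still has proportion $O(|F'|(\delta_2+\eps'))$, your Stirling estimate survives, and the route closes; but as written the defect-set encoding is not injective, and the obstacle you flag is not the one that actually breaks it.
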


\begin{proof}
Since the pseudometric $\rho_\infty$ dominates the pseudometric $\rho_2$,
we have $h_{\Sigma ,2} (\rho ) \leq h_{\Sigma ,\infty} (\rho )$.

Next we argue that $h_{\Sigma ,\infty} (\rho ) \leq h_{\Sigma ,2} (\rho )$.
Let $F$ be a finite subset of $G$, $\delta > 0$, and $\sigma$ a map from $G$ to $\Sym (d)$
for some $d\in\Nb$. Let $1/2>\eps > 0$.
Let $\eta > 0$ be the minimum of $\eps^2$ and the reciprocal of the minimum cardinality
of an $(\eps /2)$-spanning subset of $X$ with respect to $\rho$.
Given a $\varphi\in\Map (\rho,F,\delta,\sigma)$,
every element in the open $(\rho_2 ,\eta )$-ball in $\Map (\rho,F,\delta,\sigma)$ around $\varphi$
agrees with $\varphi$ to within
$\sqrt{\eta}$, and hence to within $\eps$, on a subset of $\{1,\dots,d\}$ of cardinality at least $(1-\eta )d$.
Thus the maximum cardinality of a $(\rho_\infty ,\eps )$-separated subset of the open
$(\rho_2 ,\eta)$-ball around $\varphi$ is at most $\sum_{j=0}^{\lfloor\eta d\rfloor} \binom{d}{j} \eta^{-j}$,
which by Stirling's approximation is bounded above, for all $d$ sufficiently large,
by $e^{\beta d} \eta^{-\eta d}$ for some $\beta > 0$ not depending on $d$
with $\beta\to 0$ as $\eps\to 0$. Hence
\[
N_\eps (\Map (\rho,F,\delta,\sigma),\rho_\infty )
\leq e^{\beta d} \eta^{-\eta d} N_\eta (\Map (\rho,F,\delta,\sigma),\rho_2 ) .
\]
It follows that
\[
h_{\Sigma,\infty}^\eps (\rho) \leq h_{\Sigma,2}^\eta (\rho) + \beta - \eta \log \eta ,
\]
and since $\beta - \eta \log \eta \to 0$ as $\eps\to 0$ we conclude that
$h_{\Sigma ,\infty} (\rho ) \leq h_{\Sigma ,2} (\rho )$.

Finally we show that $h_{\Sigma ,2} (\rho ) \leq h_{\Sigma ,2} (\rho' )$,
which will establish the proposition as we can interchange the roles of $\rho$ and $\rho'$.

Let $\eps > 0$.
Since $\rho$ is dynamically generating, we can find a finite set $K\subseteq G$
and an $\eps' > 0$ such that, for all $x,y\in X$, if $\rho (sx,sy) < \sqrt{3\eps'}$ for all $s\in K$
then $\rho' (x,y) < \eps /\sqrt{2}$.
By shrinking $\eps'$ if necessary we may assume that $3\eps' |K| < \eps^2 /2$.
Take a finite set $F\subseteq G$ containing $K$ and a $\delta > 0$ with $\delta\leq\eps'$
such that $h_{\Sigma,2}^{\eps'} (\rho ,F,\delta ) \leq h_{\Sigma,2}^{\eps'} (\rho ) + \eps$.
Since $\rho'$ is dynamically generating, by Lemma~\ref{L-change pseudometric} there are
a nonempty finite set $F' \subseteq G$ and a $\delta'>0$ such that for any $d\in \Nb$ and
sufficiently good sofic approximation $\sigma: G\to \Sym(d)$ we have
$\Map(\rho', F', \delta', \sigma)\subseteq \Map(\rho, F, \delta, \sigma)$.

Given $\varphi ,\psi\in\Map (\rho' ,F',\delta' ,\sigma )$ such that
$\rho_2 (\varphi , \psi ) < \eps'$, for each $s\in K$ we have, writing $\alpha_s$ for
the transformation $x\mapsto sx$ of $X$,
\begin{align*}
\rho_2 (\alpha_s \varphi , \alpha_s \psi ) \leq \rho_2 (\alpha_s \varphi , \varphi\sigma_s )
+ \rho_2 (\varphi\sigma_s , \psi\sigma_s ) + \rho_2 (\psi\sigma_s , \alpha_s \psi )
< \delta + \eps' + \delta \leq 3\eps' .
\end{align*}
This implies that there is a set
$W\subseteq \{1,\dots,d \}$ of cardinality at least $(1-3\eps' |K|)d$ such that for all $a\in W$ we have
$\rho (s\varphi (a),s\psi (a)) < \sqrt{3\eps'}$ for every $s\in K$ and hence $\rho' (\varphi (a),\psi (a)) < \eps /\sqrt{2}$.
As a consequence, assuming (as we may by normalizing) that $X$ has $\rho'$-diameter at most one,
\[
\rho'_2 (\varphi , \psi ) \leq \sqrt{(\eps/\sqrt{2})^2 + 3\eps' |K|} < \eps .
\]
It follows that
\[
N_\eps (\Map (\rho' ,F',\delta' ,\sigma ),\rho'_2 ) \leq N_{\eps'} (\Map (\rho ,F,\delta ,\sigma ),\rho_2 )
\]
and hence $h_{\Sigma,2}^\eps (\rho' ,F',\delta' ) \leq h_{\Sigma,2}^{\eps'} (\rho ,F,\delta )$, so that
\begin{align*}
h_{\Sigma ,2}^\eps (\rho')
\leq h_{\Sigma,2}^\eps (\rho' ,F',\delta' )
\leq h_{\Sigma,2}^{\eps'} (\rho ,F,\delta )
\leq h_{\Sigma,2}^{\eps'} (\rho) + \eps
\leq h_{\Sigma,2} (\rho) + \eps .
\end{align*}
Since $\eps$ was an arbitrary positive number we conclude that $h_{\Sigma ,2} (\rho' ) \leq h_{\Sigma,2} (\rho)$.
\end{proof}

\begin{definition}\label{D-topological entropy}
The {\it topological entropy} $h_\Sigma (X,G)$ of the action $G\curvearrowright X$ with respect to $\Sigma$
is defined to be the common value in Proposition~\ref{P-pseudometric} over all dynamically generating
continuous pseudometrics on $X$.
\end{definition}

Note that the approximate multiplicativity of a sofic approximation was only needed in the proof of
Lemma~\ref{L-change pseudometric} to handle the situation
in which one of $\rho$ and $\rho'$ is not an actual metric.
Indeed we could have defined topological entropy more easily by using the obvious fact that
$h_{\Sigma ,\infty} (\rho )$ takes a common value over all compatible metrics on $X$,
with Proposition~\ref{P-pseudometric} then being regarded as a Kolmogorov-Sinai theorem.
As with the $(n,\eps)$-separated set definition of topological entropy for
single transformations,
it is by considering pseudometrics that we can compute the entropy for a nontrivial
example like the shift action $G\curvearrowright \{ 1, \dots ,k \}^G$. In this case one
can see that the value is $\log k$ independently of $\Sigma$
by considering the pseudometric $\rho$ on $\{ 1, \dots ,k \}^G$
given by $\rho (x,y) = 0$ or $1$ depending on whether or not the coordinates of $x$ and $y$ at $e$ agree.
Indeed $\log k$ is easily seen to be an upper bound, and given a nonempty finite set $F\subseteq G$,
a $\delta > 0$, and a good enough sofic approximation $\sigma : G\to\Sym (d)$ we can construct
a $(\rho_\infty ,1/2)$-separated subset of $\Map (\rho ,F,\delta , \sigma )$ of cardinality $k^d$
by associating to every $\omega\in \{ 1,\dots ,k \}^d$
some $\varphi_\omega \in\Map (\rho ,F,\delta , \sigma )$ defined by
$\varphi_\omega (a)(s^{-1}) = \omega (\sigma_s (a))$ for all $a\in \{ 1,\dots , d \}$ and $s\in G$.

For actions of amenable $G$, the entropy $h_\Sigma (X,G)$ coincides with the classical
topological entropy for every $\Sigma$ \cite{KerLi11}. Such an action
always has a largest zero-entropy factor
(i.e., a zero-entropy factor such that every zero-entropy factor factors through it),
called the {\it topological Pinsker factor} \cite{BlaLac93}.
More generally for sofic $G$, with respect to a fixed $\Sigma$ there exists a largest factor of
the action $G\curvearrowright X$ which has entropy either $0$ or $-\infty$ (note that the value $-\infty$
does not occur for actions of amenable $G$).
This follows from the fact that the property of having entropy $0$ or $-\infty$
is preserved under taking countable products and restricting to closed invariant sets. (The property of having entropy $-\infty$ is also preserved under taking countable products, though we do not know what happens to the property of having entropy $0$.)
We say that the action has
{\it completely positive entropy with respect to $\Sigma$} if each of its nontrivial factors
has positive entropy with respect to $\Sigma$.

Unlike in the amenable case, the largest factor with entropy $0$ or $-\infty$ might
have factors with positive entropy. In fact for every nonamenable $G$ there exist zero-entropy
actions of $G$ which have factors with positive entropy:
Take an action $G\curvearrowright X$ with $h_\Sigma (X,G) > 0$ and an action
$G\curvearrowright Y$ which has no $G$-invariant Borel probability measure, and consider the action
of $G$ on $K:=(X\times Y) \coprod \{ z \}$ where $z$ is a point on which $G$ acts trivially.
Then $h_\Sigma (K,G) = 0$ but the quotient action on $X\coprod \{ z \}$ satisfies $h_\Sigma (X\coprod \{ z \} ,G) > 0$.

\section{Orbit IE-tuples}\label{S-orbit}

Let $G\curvearrowright X$ be a continuous action of a discrete group on a compact Hausdorff space.
Recall from the introduction that if $\oA = (A_1 , \dots ,A_k )$ is a tuple of subsets of $X$
then we say that a subset $F$ of $G$ is an {\it independence set for $\oA$} if for every
finite subset $J$ of $F$ and every function $\omega : J\to \{ 1,\dots ,k \}$
we have $\bigcap_{s\in J} s^{-1} A_{\omega (s)} \neq\emptyset$.

\begin{definition}\label{D-independence density}
Let $\oA = (A_1 , \dots ,A_k )$ be a tuple of subsets of $X$. We define the
{\it independence density of $\oA$ (over $G$)} to be the largest $q\ge 0$ such that
every finite set $F\subseteq G$ has a subset of cardinality at least $q|F|$
which is an independence set for $\oA$.
\end{definition}

\begin{definition}\label{D-orbit IE}
We say that a tuple $\ox = (x_1 ,\dots ,x_k )\in X^k$ is an {\it orbit IE-tuple} (or {\it orbit IE-pair}
in the case $k=2$) if for every product neighbourhood $U_1 \times\dots\times U_k$ of $\ox$ the tuple
$(U_1 , \dots , U_k )$ has positive independence density. Write $\IE_k (X,G)$ for the
set of all orbit IE-tuples of length $k$.
\end{definition}

As Theorem~\ref{T-amenable case: orbit IE=IE} below demonstrates,
the notation $\IE_k (X,G)$ is consistent with its use in \cite{KerLi07} when $G$ is amenable.

The equality in the next theorem statement is understood with respect to the identification of
$((x_1,\dots,x_k),(y_1,\dots,y_k))\in X^k \times Y^k$ and $((x_1,y_1),\dots,(x_k,y_k))\in (X\times Y)^k$.

\begin{theorem} \label{T-product for orbit IE}
Let $G\curvearrowright X$ and $G\curvearrowright Y$ be continuous actions on compact Hausdorff spaces. Let $k\in \Nb$. Then
\[
\IE_k (X\times Y,G)= \IE_k (X,G) \times \IE_k (Y,G).
\]
\end{theorem}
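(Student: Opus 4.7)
The plan is to prove the two inclusions separately, with the forward inclusion following from projecting independence sets and the reverse inclusion using the hereditary nature of the independence set property together with a two-step density-chaining argument.

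For the inclusion $\IE_k(X\times Y,G) \subseteq \IE_k(X,G) \times \IE_k(Y,G)$, I would start with $((x_1,y_1),\dots,(x_k,y_k)) \in \IE_k(X\times Y,G)$ and verify each coordinate projection separately. Given any product neighbourhood $U_1\times\cdots\times U_k$ of $(x_1,\dots,x_k)$ in $X^k$, apply the hypothesis to the product neighbourhood $(U_1\times Y)\times\cdots\times (U_k\times Y)$ of $((x_1,y_1),\dots,(x_k,y_k))$. This yields a constant $q>0$ such that every finite $F\subseteq G$ has a subset of size at least $q|F|$ which is an independence set for $(U_1\times Y,\dots,U_k\times Y)$; projecting to $X$, the same subset is an independence set for $(U_1,\dots,U_k)$. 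Thus $(x_1,\dots,x_k)\in \IE_k(X,G)$, and symmetrically $(y_1,\dots,y_k)\in \IE_k(Y,G)$.

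For the reverse inclusion, fix orbit IE-tuples $(x_1,\dots,x_k)\in \IE_k(X,G)$ and $(y_1,\dots,y_k)\in \IE_k(Y,G)$, and let $U_1\times V_1,\dots,U_k\times V_k$ be a product neighbourhood of $((x_1,y_1),\dots,(x_k,y_k))$ in $(X\times Y)^k$. By Definition~\ref{D-orbit IE} there exist $q,r>0$ such that the tuples $\oU=(U_1,\dots,U_k)$ and $\oV=(V_1,\dots,V_k)$ have independence densities at least $q$ and $r$ respectively. Given any finite $F\subseteq G$, first choose $F'\subseteq F$ with $|F'|\ge q|F|$ which is an independence set for $\oU$, and then choose $J\subseteq F'$ with $|J|\ge r|F'|\ge qr|F|$ which is an independence set for $\oV$. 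The key observation is that the independence set property is hereditary: any subset of an independence set is again an independence set (its defining nonemptiness condition involves only a subcollection of intersections). Therefore $J$ is simultaneously an independence set for $\oU$ and $\oV$, which immediately implies that for any $\omega : J\to\{1,\dots,k\}$,
\[
\bigcap_{s\in J} s^{-1}(U_{\omega(s)}\times V_{\omega(s)}) = \Bigl(\bigcap_{s\in J} s^{-1}U_{\omega(s)}\Bigr) \times \Bigl(\bigcap_{s\in J} s^{-1}V_{\omega(s)}\Bigr) \neq \emptyset,
\]
so $J$ is an independence set for $(U_1\times V_1,\dots,U_k\times V_k)$. Hence this tuple has independence density at least $qr>0$, establishing that $((x_1,y_1),\dots,(x_k,y_k))\in\IE_k(X\times Y,G)$.

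Neither direction presents a real obstacle; the argument rests on two elementary facts (heredity of independence sets under subsets, and that intersections in a product are products of intersections), and the quantitative density bound $qr$ comes out essentially for free by chaining the two density conditions.
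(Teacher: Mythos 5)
Your proof is correct and follows essentially the same route as the paper: the forward inclusion by projection, and the reverse inclusion by nesting an independence set for $\oV$ inside one for $\oU$ to get density at least $qr$, using heredity of independence sets and the fact that intersections in a product space factor as products of intersections.
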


\begin{proof}
The inclusion $\IE_k (X\times Y,G)\subseteq \IE_k (X,G) \times \IE_k (Y,G)$ is trivial.
To prove the other direction, it suffices to show that if $\oA = (A_1 , \dots ,A_k )$
is a  tuple of subsets of $X$ with independence density $q$
and $\oB=(B_1, \dots, B_k)$ is a tuple of subsets of
$Y$ with independence density $r$, then
$\oA\times \oB:=(A_1\times B_1, \dots, A_k\times B_k)$ has independence density at least $qr$.
Let $F$ be a nonempty finite subset of $G$. Then we can find a $J\subseteq F$ with $|J|\ge q|F|$
which is an independence set for $\oA$. We can then find a $J_1\subseteq J$ with $|J_1|\ge r|J|$
which as an independence set for $\oB$. Then $J_1$ is an independence
set for $\oA\times \oB$ and $|J_1|\ge qr|F|$.
\end{proof}

In \cite{KerLi07} we defined a tuple $\ox = (x_1 , \dots , x_k )\in X^k$ to be an {\it IN-tuple}
if for every product neighbourhood $U_1 \times\dots\times U_k$ of $\ox$ the tuple
$(U_1 ,\dots, U_k )$ has arbitrarily large finite independence sets.
The following fact is obvious.

\begin{proposition}\label{P-orbit IE to IN}
Suppose that $G$ is infinite. Then every orbit IE-tuple is an IN-tuple.
\end{proposition}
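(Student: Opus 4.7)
The statement is essentially immediate from unwinding the definitions, which is precisely why the authors flag it as obvious. My plan is the following one-paragraph argument.

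Let $\ox=(x_1,\dots,x_k)$ be an orbit IE-tuple and let $U_1\times\cdots\times U_k$ be a product neighbourhood of $\ox$. By Definition~\ref{D-orbit IE} there is some $q>0$ such that every finite subset of $G$ contains an independence set for $\oU=(U_1,\dots,U_k)$ of relative size at least $q$. Since $G$ is infinite, given any $n\in\Nb$ I can pick a finite set $F\subseteq G$ with $|F|\geq n/q$, and then Definition~\ref{D-independence density} supplies an independence set $J\subseteq F$ for $\oU$ with $|J|\geq q|F|\geq n$. Thus $\oU$ admits finite independence sets of arbitrary cardinality, so $\ox$ is an IN-tuple.

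There is no real obstacle here: the positive density condition is \emph{a fortiori} stronger than the existence of arbitrarily large independence sets once $G$ is infinite, so the proof is just one line and the authors simply omit it.
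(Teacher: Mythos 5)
Your argument is correct and is exactly the intended one: the paper states this fact without proof ("The following fact is obvious"), and unwinding Definitions~\ref{D-independence density} and \ref{D-orbit IE} as you do — choosing $F$ with $|F|\geq n/q$ to extract an independence set of size at least $n$ — is the whole content. Nothing further is needed.
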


We will strengthen this assertion in Theorem~\ref{T-orbit IE to nontame}.

\section{$\Sigma$-IE-tuples}\label{S-Sigma}

Unless otherwise stated, throughout this section $G$ is a countable sofic discrete group,
subject to further hypotheses as appropriate.
We suppose $G$ to be acting continuously on a compact metrizable space $X$,
and $\rho$ denotes a dynamically generating continuous pseudometric on $X$ unless
otherwise stated.

In order to be able to define the notion of a sofic IE-tuple as appears in Proposition~\ref{P-sofic IE to orbit IE},
we will set up our definitions for a general sofic approximation net
$\Sigma = \{ \sigma_i : G\to\Sym (d_i ) \}$, which is formally defined in the same way as
the sequential version.

\begin{definition}
Let $\oA = (A_1 , \dots ,A_k )$ be a tuple of subsets of $X$.
Let $F$ be a nonempty finite subset of $G$ and $\delta > 0$.
Let $\sigma$ be a map from $G$ to $\Sym (d)$ for some $d\in\Nb$.
We say that a set $\cJ\subseteq \{ 1,\dots ,d\}$ is a {\it $(\rho ,F,\delta ,\sigma )$-independence set for $\oA$} if for
every function $\omega : \cJ\to \{ 1,\dots ,k \}$
there exists a $\varphi \in\Map (\rho ,F,\delta ,\sigma )$ such that
$\varphi (a) \in A_{\omega (a)}$ for every $a\in \cJ$.
\end{definition}

\begin{definition} \label{D-positive independence density}
Let $\oA = (A_1 , \dots ,A_k )$ be a tuple of subsets of $X$.
Let $\Sigma = \{ \sigma_i : G\to\Sym (d_i ) \}$ be a sofic approximation net for $G$.
We say that $\oA$ has {\it positive upper independence density over $\Sigma$} if there exists a $q > 0$
such that for every nonempty finite set $F\subseteq G$ and $\delta > 0$ there is a cofinal set of $i$
for which $\oA$ has a $(\rho ,F,\delta ,\sigma_i )$-independence set of cardinality at least $qd_i$.
By Lemma~\ref{L-change pseudometric} this definition does not depend on the choice of $\rho$.
\end{definition}

For the purposes of Sections~\ref{S-product} and \ref{S-chaos},
we will consider a variation of the above definition
in which cofinality is replaced by the stronger requirement of
membership in a fixed free ultrafilter $\fF$ on $\Nb$.
The resulting notion of positive upper independence density over $\Sigma$ with respect to $\fF$
will then be used when interpreting the following definition of $\Sigma$-IE-tuples.

By the {\it universal sofic approximation net} for $G$ we mean the net $(\sigma,F)\mapsto \sigma$
indexed by the directed set of pairs $(\sigma,F)$ where $\sigma$ is a map from $G$ to $\Sym(d)$ for some $d\in\Nb$
and $F$ is a finite subset of $G$, and $(\sigma' :G\to\Sym (d'),F' )\succ (\sigma:G\to\Sym (d),F)$ means
that $d' \geq d$ and $|V(\sigma' ,F)|/d' \geq |V(\sigma ,F)|/d$, where $V(\omega,F)$ for a map
$\omega : G\to\Sym (c)$ denotes the set of all $a\in \{1,\dots ,c\}$ such that $\sigma (s)\sigma(t)a = \sigma (st)a$
for all $s,t\in F$ and $\sigma (s)a \neq \sigma (t)a$ for all distinct $s,t\in F$.

\begin{definition}\label{D-Sigma-IE}
Let $\Sigma = \{ \sigma_i : G\to\Sym (d_i ) \}$ be a sofic approximation net for $G$.
We say that a tuple $\ox = (x_1 ,\dots ,x_k )\in X^k$ is a {\it $\Sigma$-IE-tuple} (or {\it $\Sigma$-IE-pair} in the
case $k=2$) if for every product neighbourhood $U_1 \times\dots\times U_k$ of $\ox$ the tuple $(U_1 , \dots , U_k )$
has positive upper independence density over $\Sigma$.
We say that $\ox$ is a {\it sofic IE-tuple} (or {\it sofic IE-pair} in the case $k=2$) if it is a $\Sigma$-IE-tuple
for the universal sofic approximation net $\Sigma$. We denote the $\Sigma$-IE-tuples of length $k$
by $\IE^\Sigma_k (X,G)$ and the sofic IE-tuples of length $k$ by $\IE_k^{\rm sof} (X,G)$.
\end{definition}

Note that $\IE^\Sigma_k (X,G) \subseteq \IE_k^{\rm sof} (X,G)$ for every sofic approximation net $\Sigma$.

We define $\Sigma$-IE-tuples and sofic IE-tuples of  sets in the same way as for points above.

\begin{remark}\label{R-entropy tuple}
It follows from Lemma~3.3 of \cite{KerLi07} that a nondiagonal tuple of points in $X$ is a
$\Sigma$-IE-tuple if and only if it is a $\Sigma$-entropy tuple in the sense of Section~5 in \cite{Zha12}.
In particular, if in analogy with the amenable case we define the action to have
{\it uniformly positive entropy with respect to $\Sigma$} when every nondiagonal pair in $X\times X$ is a
$\Sigma$-entropy pair, then the action has this property precisely when
every pair in $X\times X$ is a $\Sigma$-IE-pair.
\end{remark}

We will need the following consequence of Karpovsky and Milman's
generalization of the Sauer-Shelah lemma
\cite{Sau72,She72,KarMil78}.

\begin{lemma}[\cite{KarMil78}]\label{L-KM}
Given $k\geq 2$ and $\lambda > 1$ there is a constant $c>0$ such
that, for all $n\in \Nb$, if $S\subseteq \{1, 2, \dots , k \}^{\{1,
2, \dots , n\}}$ satisfies $|S|\geq ((k-1)\lambda )^n$ then there is
an $I\subseteq \{1, 2, \dots , n\}$ with $|I|\geq cn$ and $S|_I =
\{1, 2,\dots , k \}^I$.
\end{lemma}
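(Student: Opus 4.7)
The plan is to deduce the lemma from the following counting inequality of Karpovsky and Milman, which generalizes the Sauer--Shelah bound to $k$-ary alphabets: if $S\subseteq\{1,\dots,k\}^{\{1,\dots,n\}}$ shatters no subset of $\{1,\dots,n\}$ of cardinality $d$, meaning $S|_I\neq\{1,\dots,k\}^I$ whenever $|I|=d$, then
\[
|S|\leq\sum_{i=0}^{d-1}\binom{n}{i}(k-1)^{n-i}.
\]

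First I would prove this counting inequality by induction on $n$. The base $n=0$ is immediate. For the inductive step, let $\pi:\{1,\dots,k\}^n\to\{1,\dots,k\}^{n-1}$ be the projection onto the first $n-1$ coordinates, set $S'=\pi(S)$, and let $S''\subseteq S'$ consist of those $y\in S'$ whose full fibre $\pi^{-1}(y)$ lies in $S$. Partitioning $S$ over the fibres of $\pi$ gives the elementary estimate
\[
|S|\leq(k-1)|S'|+|S''|,
\]
since each fibre contributes at most $k$ to $|S|$, and at most $k-1$ unless its base point lies in $S''$. Any subset of $\{1,\dots,n-1\}$ shattered by $S'$ is automatically shattered by $S$, and any subset shattered by $S''$ yields, by adjoining $n$, a shattered subset of $S$ one larger. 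Consequently the shattering dimensions of $S'$ and $S''$ are at most $d-1$ and $d-2$ respectively. Feeding the inductive hypothesis into the fibre bound and collecting terms with Pascal's identity $\binom{n-1}{i}+\binom{n-1}{i-1}=\binom{n}{i}$ produces the claimed estimate for $S$.

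With the counting inequality in hand, let $d^*$ be the largest size of a subset of $\{1,\dots,n\}$ shattered by $S$. Applying the inequality with $d=d^*+1$ and combining with the hypothesis $|S|\geq((k-1)\lambda)^n$ gives, after dividing by $(k-1)^n$,
\[
\lambda^n\leq\sum_{i=0}^{d^*}\binom{n}{i}(k-1)^{-i}\leq\sum_{i=0}^{d^*}\binom{n}{i}.
\]
Since $|S|\leq k^n$ forces $\lambda\leq k/(k-1)\leq 2$ for any $k\geq 2$, we have $\log_2\lambda\leq 1$. The standard binomial entropy estimate $\sum_{i\leq\alpha n}\binom{n}{i}\leq 2^{nH(\alpha)}$ for $\alpha\in(0,1/2]$, where $H$ denotes the binary entropy, together with a polynomial correction for the number of summands, then yields $H(d^*/n)\geq\log_2\lambda-o(1)$ as $n\to\infty$. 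Continuity of $H$ at $0$ supplies a positive constant $c=c(k,\lambda)$ with $d^*\geq cn$ for all sufficiently large $n$; a further shrinking of $c$ absorbs the finitely many small values of $n$.

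The main technical obstacle is the inductive derivation of the Karpovsky--Milman counting inequality, which requires careful bookkeeping of the way shattered subsets propagate from $S'$ and $S''$ back to $S$ under the projection; once that is in place, the passage from the counting inequality to a linear-in-$n$ shattering dimension is a routine application of the entropy estimate for binomial sums.
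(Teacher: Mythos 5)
Your argument is correct. Note that the paper does not prove this lemma at all: it is quoted directly from Karpovsky and Milman \cite{KarMil78}, so there is no in-paper proof to compare against. What you have reconstructed is the standard derivation: the counting inequality $|S|\leq\sum_{i=0}^{d-1}\binom{n}{i}(k-1)^{n-i}$ is precisely the Karpovsky--Milman theorem, your inductive proof via the projection $\pi$ and the two trace families $S'$, $S''$ (with the fibre bound $|S|\leq(k-1)|S'|+|S''|$ and the shift of shattering dimension for $S''$) is the usual one, and the passage to a linear lower bound on the shattered set via the binomial entropy estimate is how this corollary is normally extracted. Two small presentational points: the reduction ``$|S|\leq k^n$ forces $\lambda\leq k/(k-1)$'' should be stated as ``we may assume $\lambda\leq k/(k-1)$, since otherwise the hypothesis is vacuous''; and you should note explicitly that if $d^*\geq n/2$ you are done with $c=1/2$, so the entropy bound only needs to be invoked for $d^*/n\leq 1/2$ (where $\sum_{i\leq\alpha n}\binom{n}{i}\leq 2^{nH(\alpha)}$ holds with no correction term, so the $o(1)$ is unnecessary). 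Neither affects the validity of the proof.
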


\begin{proposition} \label{P-sofic IE to orbit IE}
A sofic IE-tuple is an orbit IE-tuple.
\end{proposition}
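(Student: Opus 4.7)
The plan is to fix $F\subseteq G$ finite and extract, via averaging together with a Sauer--Shelah-type argument, a large independence set inside $F$ in the orbit sense. We may assume $k\geq 2$ and that $|F|$ exceeds a threshold depending only on $k$ (small $F$ being trivial via $|J|=1$). Fix a compatible metric $\rho$ on $X$, and shrink to open $V_j\ni x_j$ with $V_j\subseteq U_j$ so that for some $\eta>0$, $\{y\in X:\rho(y,V_j)\leq\eta\}\subseteq U_j$ for each $j$. By hypothesis $(V_1,\dots,V_k)$ has some positive upper independence density $q>0$ over the universal sofic net.

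Pick $\delta_0>0$ very small (of order $\eta\sqrt{q/|F|}$), and use cofinality in the universal net to select a sufficiently free sofic approximation $\sigma:G\to\Sym(d)$ admitting a $(\rho,F,\delta_0,\sigma)$-independence set $\cJ\subseteq\{1,\dots,d\}$ for $(V_1,\dots,V_k)$ with $|\cJ|\geq qd$, and free enough that the set $E$ of $a$ for which $s\mapsto\sigma_s(a)$ fails to be injective on $F$ has $|E|\leq qd/8$. For each colouring $\omega:\cJ\to\{1,\dots,k\}$ fix a witness $\varphi_\omega\in\Map(\rho,F,\delta_0,\sigma)$ with $\varphi_\omega(b)\in V_{\omega(b)}$ for all $b\in\cJ$. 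Set $J_a:=\{s\in F:\sigma_s(a)\in\cJ\}$ and $G_\omega:=\{a:\max_{s\in F}\rho(s\varphi_\omega(a),\varphi_\omega(\sigma_s(a)))<\eta\}$.

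Two averaging estimates locate a good base point. First, $\sum_a|J_a|=q|F|d$ forces at least $qd/2$ of the $a$'s to satisfy $|J_a|\geq q|F|/2$; removing $E$ leaves at least $3qd/8$ such $a$. Second, the $\rho_2$-defect bound for $\varphi_\omega$ gives $|G_\omega^c|\leq|F|\delta_0^2 d/\eta^2$ for each $\omega$, so by Fubini $\sum_a|\{\omega:a\notin G_\omega\}|\leq k^{|\cJ|}|F|\delta_0^2 d/\eta^2$. The choice of $\delta_0$ makes this total less than $(k^{|\cJ|}/2)(3qd/8)$, so some $a_0$ among the candidates simultaneously satisfies $|J_{a_0}|\geq q|F|/2$, $a_0\notin E$, and $|\Omega_{a_0}|\geq k^{|\cJ|}/2$, where $\Omega_{a_0}:=\{\omega:a_0\in G_\omega\}$.

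Since $a_0\notin E$, $s\mapsto\sigma_s(a_0)$ is a bijection from $J_{a_0}$ onto its image in $\cJ$, so restriction sends $\Omega_{a_0}$ into $\{1,\dots,k\}^{J_{a_0}}$; each fibre has size at most $k^{|\cJ|-|J_{a_0}|}$, giving image size at least $k^{|J_{a_0}|}/2$. Pick $\lambda\in(1,k/(k-1))$; for $|F|$ large this image exceeds $((k-1)\lambda)^{|J_{a_0}|}$, and Lemma~\ref{L-KM} produces $I'\subseteq J_{a_0}$ of size at least $c|J_{a_0}|\geq(cq/2)|F|$ on which the image is the full cube $\{1,\dots,k\}^{I'}$. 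For each $\tau':I'\to\{1,\dots,k\}$, choose $\omega\in\Omega_{a_0}$ realising $\tau'$; then $y:=\varphi_\omega(a_0)$ has $\rho(sy,\varphi_\omega(\sigma_s(a_0)))<\eta$ and $\varphi_\omega(\sigma_s(a_0))\in V_{\tau'(s)}$ for each $s\in I'$, so $sy\in U_{\tau'(s)}$. Thus $I'\subseteq F$ is an independence set for $(U_1,\dots,U_k)$, and $q':=cq/2$ works uniformly in $F$. The main obstacle is the quantifier swap---$\omega$, and hence $y$, varies with $\tau'$, while $a_0$ must be chosen in advance---which is precisely the feature that Lemma~\ref{L-KM} resolves.
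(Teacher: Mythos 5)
Your argument is correct and rests on the same two pillars as the paper's proof --- the Karpovsky--Milman Lemma~\ref{L-KM} and an averaging argument over the sofic model to transfer external independence on $\{1,\dots,d\}$ to independence inside $F$ --- but you arrange them in the opposite order, and this is a genuine (and slightly leaner) variation. The paper first pigeonholes the colourings: it counts the possible ``good sets'' $\Lambda_\omega$ via a Stirling estimate to extract a large family $\Omega_i$ of colourings sharing a single $\Theta_i$, applies Lemma~\ref{L-KM} \emph{globally} to $\Omega_i\subseteq\{1,\dots,k\}^{\cJ_i}$ to get $\cJ_i'$ with the full-shattering property, and only then averages to find a base point $a_i$ whose $F$-orbit meets $\cJ_i'$ proportionally often. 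You instead average first, locating $a_0\notin E$ with $|J_{a_0}|\ge q|F|/2$ that lies in $G_\omega$ for a \emph{majority} of colourings (a Markov-type count replacing the Stirling pigeonhole), and then apply Lemma~\ref{L-KM} only to the restrictions of $\Omega_{a_0}$ to the window $J_{a_0}$. The trade-off is that your route needs $|F|$ large enough that $k^{|J_{a_0}|}/2\ge((k-1)\lambda)^{|J_{a_0}|}$; note this threshold depends on $q$ and $\lambda$, not only on $k$ as you state, but this is harmless since the final density constant can be taken as $\min(cq/2,1/N)$ with singletons covering $|F|<N$. Two further cosmetic points: $\sum_a|J_a|=|F|\,|\cJ|\ge q|F|d$ is an inequality rather than an equality, and the case $k=1$ (where Lemma~\ref{L-KM} does not apply) needs the separate one-line treatment with a single $\varphi$, exactly as the paper also relegates to a closing remark.
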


\begin{proof}
Fix a compatible metric $\rho$ on $X$. Let $\ox = (x_1 ,\dots ,x_k )$ be a $\Sigma$-IE-tuple and
$U_1\times \dots \times U_k$ a product neighborhood
of $\ox$. We will show that the tuple $(U_1, \dots, U_k)$ has positive independence density over $G$.

Suppose first that $k>1$.
Take $1<\lambda<\frac{k}{k-1}$. Then we have the constant $c>0$ in Lemma~\ref{L-KM}.

Let $V_1\times \dots \times V_k$ be a product neighborhood of $\ox$ such that for some $\kappa>0$
the $\kappa$-neighborhood of $V_j$ is contained in $U_j$ for all $1\le j\le k$. Then there exists a $q>0$
such that for every nonempty finite subset $F$ of $G$  and $\delta>0$ there is a cofinal set of $i$
for which the tuple $(V_1, \dots, V_k)$ has a $(\rho, F, \delta, \sigma_i)$-independence set $\cJ_i$
of cardinality at least $qd_i$.

Let $F$ be a nonempty finite subset of $G$. We will show $F$ has a subset of cardinality at least
$(cq/2)|F|$ which is an independence set for  the tuple
$(U_1, \dots, U_k)$. Let $\delta$ be a small positive number to be determined in a moment.

Take an $i$ in the above cofinal set with $|\cW_i|\ge (1-\delta)d$ for
$\cW_i:=\{a\in \{1, \dots, d_i\}: F\overset{\sigma_i(\cdot)a}{\rightarrow} \sigma_i(F)a \text{ is injective}\}$.
For each $\omega: \cJ_i\rightarrow \{1, \dots, k\}$,
take a $\varphi_\omega\in \Map (\rho ,F,\delta ,\sigma )$ such that
$\varphi_\omega(a)\in V_{\omega(a)}$ for all $a\in \cJ_i$.
Then $|\{a\in \{1, \dots, d_i\}: \rho(s\varphi_\omega(a), \varphi_\omega(sa))\le \delta^{1/2}\}|
\ge (1-\delta)d_i$ for each $s\in F$ and hence $|\Lambda_\omega|\ge (1-|F|\delta)d_i$ for
\[
\Lambda_\omega:=\{a\in \{1, \dots, d_i\}: \rho(s\varphi_\omega(a), \varphi_\omega(sa))\le \delta^{1/2} \text{ for all } s\in F\}.
\]

Set $n = |F|$. When $n\delta <1/2$,
the number of subsets of $\{1, \dots, d\}$ of cardinality no greater than $n\delta d$
is equal to $\sum_{j=0}^{\lfloor n\delta d \rfloor} \binom{d}{j}$, which is at most
$n\delta d \binom{d}{n\delta d}$,
which by Stirling's approximation is less than $\exp(\beta d)$
for some $\beta > 0$ depending on $\delta$ and $n$
but not on $d$ when $d$ is sufficiently large with $\beta\to 0$ as $\delta\to 0$ for a fixed $n$.
Thus when $\delta$ is small enough and $i$ is large enough,
there is a subset $\Omega_i$ of $\{1, \dots, k\}^{\cJ_i}$ with
$\big(\frac{k}{(k-1)\lambda}\big)^{q d_i}|\Omega_i|\ge k^{|\cJ_i|}$
such that the set $\Lambda_\omega$ is the same, say $\Theta_i$, for every $\omega \in \Omega_i$,
and $|\Theta_i|/d_i>1-|F|\delta$. Then
\[
|\Omega_i|\ge k^{|\cJ_i|}\bigg(\frac{(k-1)\lambda}{k}\bigg)^{q d_i}
\ge k^{|\cJ_i|}\bigg(\frac{(k-1)\lambda}{k}\bigg)^{|\cJ_i|}=((k-1)\lambda)^{|\cJ_i|}.
\]
By our choice of $c$, we can find a subset $\cJ'_i$ of $\cJ_i$ with $|\cJ'_i|\ge c|\cJ_i|\ge cq d_i$
such that every $\xi: \cJ'_i\rightarrow \{1, \dots, k\}$ extends to some $\omega\in \Omega_i$.

Writing $\zeta$ for the uniform probability measure on $\{1,\dots,d\}$, we have
\begin{align*}
\int_{\cW_i\cap \Theta_i} \sum_{s\in F}1_{\cJ'_i}(sa)\, d\zeta (a)
&=\sum_{s\in F}\int_{\cW_i \cap \Theta_i} 1_{\cJ'_i}(sa)\, d\zeta (a)\\
&\ge \sum_{s\in F}\bigg(\frac{|\cJ'_i|}{d_i}-(|F|+1)\delta\bigg)\ge (cq-(|F|+1)\delta) |F|,
\end{align*}
and hence $\sum_{s\in F}1_{\cJ'_i}(sa_i)\ge (cq-(|F|+1)\delta)|F|$ for some $a_i\in \cW_i\cap \Theta_i$.
Then $|J_i|\ge (cq-(|F|+1)\delta)|F|$ for $J_i:=\{s\in F: sa_i\in \cJ'_i\}$.

We claim that $J_i$ is an independence set for the tuple $(U_1, \dots, U_k)$ when $\delta<\kappa^2$.
Let $f\in \{1, \dots, k\}^{J_i}$.
Since $a_i\in \cW_i$, the map $J_i\overset{\sigma_i(\cdot)a_i}{\rightarrow} \sigma_i(J_i)a_i$ is bijective.
Thus we can define $\xi'\in \{1, \dots, k\}^{\sigma(J_i)a_i}$ by $\xi'(sa_i)=f(s)$ for $s\in J_i$.
Extend $\xi'$ to some $\xi\in \{1, \dots, k\}^{\cJ'_i}$. Then we can extend $\xi$ to some
$\omega \in \Omega_i$. For every $s\in J_i$, since $sa_i\in \cJ_i$ and $a_i\in \Theta_i=\Lambda_\omega$, we have
$\varphi_\omega(s a_i)\in V_{\omega (s a_i)}=V_{f(s)}$ and
$\rho(s\varphi_\omega(a_i), \varphi_\omega(s a_i))\le \delta^{1/2}<\kappa$.
By the choice of $\kappa$ we have $s \varphi_\omega(a_i)\in U_{f(s)}$. This proves our claim.

Taking $\delta$ to be small enough, we have $|J_i|\ge (cq-(|F|+1)\delta)|F|\ge (cq/2)|F|$ as desired.

The case $k=1$ can be established by a simpler version of the above argument that considers only a single
map of the form $\varphi_\omega$ and does not require the invocation of the constant $c$ from
Lemma~\ref{L-KM} and the associated use of Stirling's approximation.
\end{proof}

For the remainder of this subsection, $\Sigma = \{ \sigma_i :G\to\Sym (d_i) \}$ is a fixed
but arbitrary sofic approximation sequence.

In the case that $G$ is amenable, the independence density $I(\oA)$ of a tuple $\oA$ of subsets of $X$
was defined on page 887 of \cite{KerLi07} as the limit of $\varphi_{\oA}(F) /|F|$ as
the nonempty finite set $F\subseteq G$ becomes more and more left invariant,
where $\varphi_{\oA}(F)$ denotes the maximum of the cardinalities of the independence sets
for $\oA$ which are contained in $F$.
A tuple $(x_1 , \dots , x_k ) \in X^k$ is an {\it IE-tuple} if for every product neighbourhood
$U_1 \times\cdots\times U_k$ of $(x_1 , \dots , x_k )$ the independence density $I(\oU)$
of the tuple $\oU = (U_1 ,\dots, U_k )$ is positive.

To establish Theorem~\ref{T-amenable case: orbit IE=IE} we need the following version of the Rokhlin lemma
for sofic approximations, which appears as Lemma~4.6 in \cite{KerLi10}. For $\lambda\geq 0$,
a collection of subsets of $\{ 1,\dots ,d\}$ is said to {\it $\lambda$-cover} $\{1, \dots, d\}$ if
its union has cardinality at least $\lambda d$.

\begin{lemma}\label{L-Rokhlin2}
Let $G$ be a countable amenable discrete group.
Let $0\le \tau<1$ and $0<\eta<1$. Let $K$ be a nonempty finite subset of $G$ and $\delta>0$.
Then there are an $\ell\in \Nb$, nonempty finite subsets $F_1, \dots, F_\ell$ of $G$ with $|KF_k \setminus F_k|<\delta |F_k|$ and
$|F_kK\setminus F_k|<\delta|F_k|$ for
all $k=1, \dots, \ell$, a finite set $F\subseteq G$ containing $e$,
and an $\eta'>0$ such that,
for every $d\in \Nb$, every map $\sigma: G\rightarrow \Sym(d)$ for which there is a set $B\subseteq \{1, \dots, d\}$ satisfying
$|B|\ge (1-\eta')d$ and
\[
\sigma_{st}(a)=\sigma_s\sigma_t(a), \sigma_s(a)\neq \sigma_{s'}(a), \sigma_e(a)=a
\]
for all $a\in B$ and $s, t, s'\in F$ with $s\neq s'$, and every set $V\subseteq \{1, \dots, d\}$ with $|V|\ge (1-\tau)d$,
there exist $C_1, \dots, C_\ell\subseteq V$ such that
\begin{enumerate}
\item for every $k=1, \dots, \ell$, the map $(s, c)\mapsto \sigma_s(c)$ from $F_k\times C_k$ to $\sigma(F_k)C_k$ is bijective,

\item the family $\{ \sigma(F_1)C_1, \dots, \sigma(F_\ell)C_\ell \}$ is disjoint and $(1-\tau-\eta)$-covers $\{1, \dots, d\}$.
\end{enumerate}
\end{lemma}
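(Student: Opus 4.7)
The plan is to combine the Ornstein--Weiss quasi-tiling theorem for amenable groups with a greedy extraction carried out inside $\{1,\dots,d\}$.

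First, we apply the Ornstein--Weiss quasi-tiling theorem to obtain nonempty finite subsets $F_1,\dots,F_\ell$ of $G$ satisfying the required left- and right-invariance conditions $|KF_k\setminus F_k|<\delta|F_k|$ and $|F_kK\setminus F_k|<\delta|F_k|$, together with a finite set $F^\star\subseteq G$ and $\delta^\star>0$ having the property that every $(F^\star,\delta^\star)$-left-invariant finite $S\subseteq G$ admits a disjoint family of right-translates $F_kc_{k,j}\subseteq S$ that $(1-\eta/2)$-covers $S$. One may choose the $F_k$'s to be increasingly invariant ($|F_\ell|\gg\cdots\gg|F_1|$), so that a greedy procedure processing them in decreasing order incurs only a negligible loss in coverage.

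Next, set $F=\{e\}\cup K\cup F^\star\cup\bigcup_k F_k$, enlarged if necessary so that the auxiliary product and difference sets appearing below lie in $F$, and pick $\eta'>0$ small relative to $\eta$, $\tau$, and $|F|$. Let $\sigma\colon G\to\Sym(d)$ be a sofic approximation that is good on a set $B$ with $|B|\geq(1-\eta')d$, and let $V\subseteq\{1,\dots,d\}$ satisfy $|V|\geq(1-\tau)d$. For $a\in B$ the map $s\mapsto\sigma_s(a)$ is injective on $F$ and approximately equivariant, so it furnishes a faithful local copy of a large neighbourhood of the identity in $G$. Now choose the $C_k$'s greedily: for $k=\ell,\ell-1,\dots,1$ in decreasing order, take $C_k\subseteq V\cap B$ maximal subject to (i) $(s,c)\mapsto\sigma_s(c)$ being injective on $F_k\times C_k$, and (ii) $\sigma(F_k)C_k$ being disjoint from $\bigcup_{k'>k}\sigma(F_{k'})C_{k'}$. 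The disjointness of the family and the bijectivity clause both hold by construction.

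The main obstacle is verifying that $U:=\bigcup_k\sigma(F_k)C_k$ satisfies $|U|\geq(1-\tau-\eta)d$. The idea is to transfer the Ornstein--Weiss quasi-tiling from $G$ to the sofic model pointwise: for each $a\in B$, the local copy $s\mapsto\sigma_s(a)$ carries the quasi-tiling of a fixed large $(F^\star,\delta^\star)$-invariant finite set $S\subseteq G$ to a disjoint family of $\sigma(F_k)$-translates, centered at the points $\sigma_{c_{k,j}}(a)$, which $(1-\eta/2)$-covers $\sigma(S)a$. By maximality of each $C_k$, any such would-be translate whose center lies in $V\cap B$ must either already belong to $U$ or intersect $U$; averaging this assertion over $a\in B$ via a Fubini/double-counting argument yields $|U\cap V|\geq(1-\eta/2)|V|-O(\eta')d$, whence $|U|\geq(1-\tau-\eta)d$ once $\eta'$ is small enough to absorb the boundary and sofic-multiplicativity errors. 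The delicate bookkeeping concerns exactly these error terms: each violation of the relations $\sigma_{st}(a)=\sigma_s\sigma_t(a)$ or $\sigma_s(a)\neq\sigma_{s'}(a)$ on $F$ can spoil only a bounded number of putative placements, so the total loss is $O(\eta'|F|\ell)d$, which the choice of $\eta'$ absorbs into the $\eta d$ slack.
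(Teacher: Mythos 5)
The paper does not actually prove this lemma itself; it quotes it as Lemma~4.6 of \cite{KerLi10}, where it is deduced from the $\eta$-disjoint quasi-tiling statement (Lemma~4.5 there, reproduced as Lemma~\ref{L-Rokhlin1} in this paper). Your proposal has a genuine gap at the coverage step, and it sits exactly where the Ornstein--Weiss machinery is designed to intervene. You select each $C_k$ as a \emph{maximal} family subject to exact disjointness of the translates $\sigma(F_k)c$ from one another and from the tiles already placed, and then argue that any candidate tile coming from the transferred quasi-tiling ``must either already belong to $U$ or intersect $U$''. Merely intersecting $U$ gives no lower bound on how much of that candidate tile is covered---it could meet $U$ in a single point---so averaging over $a\in B$ does not yield $|U\cap V|\geq(1-\eta/2)|V|-O(\eta')d$. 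Quantitatively, a maximal exactly disjoint packing by translates of a set $F$ is only guaranteed to cover a proportion about $|F|/|F^{-1}F|$ of the space, and for F{\o}lner sets of a general amenable group (e.g.\ the lamplighter group) this ratio tends to $0$; iterating over the finitely many scales $F_\ell\supseteq\cdots\supseteq F_1$ chosen in advance therefore need not bring the covered proportion anywhere near $1-\tau-\eta$. This is precisely why Ornstein--Weiss work with $\eta$-disjoint families: Lemma~\ref{L-disjointcover} converts an even covering into an $\eta$-disjoint subfamily covering a proportion $\eta(1-\delta)$, and that factor of $\eta$ vanishes if one insists on exact disjointness at the extraction step.

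The missing idea for the exact-disjointness conclusion is the passage from the $\eta$-disjoint version to the present statement: first produce an $\eta_0$-disjoint family $\{\sigma(F'_k)c\}$ (with $\eta_0$ small) that $(1-\tau-\eta/2)$-covers; by $\eta_0$-disjointness each tile $\sigma(F'_k)c$ contains a subset of relative size at least $1-\eta_0$ disjoint from all the others, which via the bijection $s\mapsto\sigma_s(c)$ on $F'_k$ corresponds to a subset $F_{k,c}\subseteq F'_k$; there are only finitely many possible shapes $F_{k,c}$, each still satisfying the two-sided $(K,\delta)$-invariance provided $\eta_0$ is small relative to $\delta/|K|$, so regrouping the centers by shape yields the exactly disjoint family for a new (larger but still finite) list of tile shapes. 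Without this shrink-and-regroup step, or an equivalent device, the lemma as stated is not reached.
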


\begin{theorem} \label{T-amenable case: orbit IE=IE}
Suppose that $G$ is amenable. Then IE-tuples, orbit IE-tuples, $\Sigma$-IE-tuples, and sofic IE-tuples
are all the same thing.
\end{theorem}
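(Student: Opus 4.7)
The plan is to close the cycle of implications
\[
\text{orbit IE} \ \Longrightarrow \ \text{IE} \ \Longrightarrow \ \Sigma\text{-IE} \ \Longrightarrow \ \text{sofic IE} \ \Longrightarrow \ \text{orbit IE}.
\]
The last two links are already in place: the inclusion $\IE_k^\Sigma(X,G)\subseteq\IE_k^{\rm sof}(X,G)$ is recorded immediately after Definition~\ref{D-Sigma-IE}, while sofic IE $\Rightarrow$ orbit IE is Proposition~\ref{P-sofic IE to orbit IE}. The opening link orbit IE $\Rightarrow$ IE is immediate: if a product neighbourhood $\oU=(U_1,\dots,U_k)$ of $\ox$ has orbit independence density $q>0$, then for any tempered F{\o}lner sequence $\{F_n\}$ one has $\varphi_{\oU}(F_n)/|F_n|\ge q$ for every $n$, so $I(\oU)\ge q>0$.

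The substantive part is the middle arrow IE $\Rightarrow$ $\Sigma$-IE, where $\Sigma=\{\sigma_i:G\to\Sym(d_i)\}$ is an arbitrary sofic approximation sequence. Suppose $\oU=(U_1,\dots,U_k)$ is a product neighbourhood of an IE-tuple with $I(\oU)=q>0$, and fix $F\subseteq G$ finite and $\delta>0$. For small parameters $\tau,\eta>0$ to be tuned later, I would apply the sofic Rokhlin lemma (Lemma~\ref{L-Rokhlin2}) with $K=F$ to produce sets $F_1,\dots,F_\ell\subseteq G$ that are as left- and right-$F$-invariant as desired, together with the promise that every sufficiently good sofic approximation $\sigma_i$ admits subsets $C_1,\dots,C_\ell\subseteq\{1,\dots,d_i\}$ for which the tile maps $(s,c)\mapsto\sigma_{i,s}(c)$ on $F_k\times C_k$ are bijective and the tiles $\sigma_i(F_k)C_k$ are pairwise disjoint and $(1-\tau-\eta)$-cover $\{1,\dots,d_i\}$. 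Because the $F_k$'s can be made arbitrarily invariant, the definition of $I(\oU)$ allows us to pick within each $F_k$ an independence set $J_k\subseteq F_k$ of size at least $(q-\eta)|F_k|$. I then propose as candidate
\[
\cJ_i \ := \ \bigsqcup_{k=1}^{\ell}\sigma_i(J_k)C_k,
\]
of cardinality at least $(q-\eta)(1-\tau-\eta)d_i$ by bijectivity of the tile maps.

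It remains to check that $\cJ_i$ is genuinely a $(\rho,F,\delta,\sigma_i)$-independence set for $\oU$. For any $\omega:\cJ_i\to\{1,\dots,k\}$ and any $c\in C_k$, the function $s\mapsto\omega(\sigma_{i,s}(c))$ defines $\omega_{k,c}:J_k\to\{1,\dots,k\}$; choosing $x_{k,c}\in\bigcap_{s\in J_k}s^{-1}U_{\omega_{k,c}(s)}$ (possible because $J_k$ is an independence set in $G$), extending $\omega_{k,c}$ arbitrarily to $F_k$, and setting $\varphi(\sigma_{i,s}(c)):=sx_{k,c}$ for $(s,c)\in F_k\times C_k$ with $\varphi$ chosen arbitrarily off the tiles, gives a map landing in $U_{\omega(a)}$ on all of $\cJ_i$. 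To verify the approximate equivariance $\rho_2(\varphi\sigma_{i,t},\alpha_t\varphi)<\delta$ for $t\in F$, observe that whenever $a=\sigma_{i,s}(c)$ lies inside a tile with $ts\in F_k$ and $\sigma_{i,t}\sigma_{i,s}(c)=\sigma_{i,ts}(c)$, both $\varphi(\sigma_{i,t}(a))$ and $t\varphi(a)$ equal $(ts)x_{k,c}$ exactly. The main obstacle is the sequential bookkeeping of the three error sources that control the complementary set: the F{\o}lner boundary $|FF_k\setminus F_k|/|F_k|$, the multiplicativity defect of $\sigma_i$, and the uncovered fraction $\tau+\eta$. One must first fix $\tau,\eta$ small, then impose enough $F$-invariance on the $F_k$'s to push the boundary effect below $\delta$, and only then pass to sofic approximations $\sigma_i$ accurate enough for the multiplicativity defect to be absorbed as well. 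Once the parameters are sequenced correctly, $\cJ_i$ witnesses positive upper independence density $(q-\eta)(1-\tau-\eta)$ for all sufficiently large $i$, proving IE $\Rightarrow$ $\Sigma$-IE and closing the cycle.
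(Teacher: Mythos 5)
Your proposal is correct and follows essentially the same route as the paper: the trivial inclusions plus Proposition~\ref{P-sofic IE to orbit IE} reduce everything to the implication IE $\Rightarrow$ $\Sigma$-IE, which both you and the paper establish by invoking the sofic Rokhlin lemma (Lemma~\ref{L-Rokhlin2}), placing large independence sets $J_k$ inside sufficiently invariant tiles $F_k$, and transporting independence to $\bigcup_k \sigma_i(J_k)C_k$ via maps of the form $\varphi(\sigma_{i,s}(c))=s x_{k,c}$. The only cosmetic caveat is that the invariance you must demand of the $F_k$ in Lemma~\ref{L-Rokhlin2} is not just under $F$ but under whatever finite set is needed for $\varphi_{\oU}(F_k)/|F_k|$ to be within $\eta$ of $I(\oU)$, which your phrase ``as invariant as desired'' already accommodates.
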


\begin{proof}
By Proposition~\ref{P-sofic IE to orbit IE}, sofic IE-tuples and $\Sigma$-IE-tuples
are orbit IE-tuples. That orbit IE-tuples are IE-tuples is clear
in view of the definition of the independence density $I(\oA )$ of a tuple $\oA$ of subsets
of $X$. It thus remains to show that IE-tuples are both sofic IE-tuples and $\Sigma$-IE-tuples.

To prove that IE-tuples are $\Sigma$-IE-tuples,
it suffices to demonstrate that, given
a tuple $\oU = (U_1,\dots,U_k)$ of subsets of $X$ with $I(\oU) > 0$,
the tuple $\oU$ has positive upper independence density over $\Sigma$.
Set $\lambda = I(\oU) > 0$. Let $F$ be a nonempty finite subset of $G$ and $\delta > 0$.

Let $\eta>0$, to be determined.
By Lemma~\ref{L-Rokhlin2} we can find an $\ell\in\Nb$ and nonempty finite sets $F_1 ,\dots ,F_\ell \subseteq G$
such that
(i) the sets $F_1,\dots,F_\ell$ are sufficiently left invariant
so that for each $i=1,\dots ,\ell$ there is a set $J_i \subseteq F_i$ which is an independence set for $\oU$
and has cardinality at least $\lambda |F_i|/2$, and (ii)
for every good enough sofic approximation $\sigma : G\to\Sym(d)$ there exist
$C_1,\dots,C_\ell \subseteq \{ 1,\dots ,d \}$ satisfying the following:
\begin{enumerate}
\item for every $i=1, \dots, \ell$ and $c\in C_i$, the map $s\mapsto \sigma_s(c)$ from $F_i$ to $\sigma(F_i)c$ is bijective,

\item the family of sets $\sigma(F_i)c$ for $i=1,\dots,\ell$ and $c\in C_i$
is disjoint and $(1-\eta)$-covers $\{1, \dots, d\}$.
\end{enumerate}

Let $\sigma:G\rightarrow \Sym (d)$ be a sufficiently good sofic approximation for $G$ for some $d\in\Nb$.
For every $h = (h_1,\dots,h_\ell )\in\prod_{i=1}^\ell X^{C_i}$ take a map
$\varphi_h : \{ 1,\dots ,d\} \to X$ such that
\[
\varphi_h (sc) = s(h_i (c))
\]
for all $i\in \{1,\dots,\ell\}$, $c\in C_i$, and $s\in F_i$. We may assume
in our invocation of Lemma~\ref{L-Rokhlin2} above that the sets $F_1,\dots,F_\ell$
are sufficiently left invariant so that, assuming that $\eta$ is sufficiently small
and $\sigma$ is a sufficiently good sofic approximation, we
have $\varphi_h \in\Map (\rho,F,\delta,\sigma)$ for every
$h\in\prod_{i=1}^\ell X^{C_i}$.
Write $\cJ$ for the subset
$\bigcup_{i=1}^\ell \bigcup_{s\in J_i} \bigcup_{c\in C_i} \sigma_s (c)$ of $\{1,\dots,d\}$.
From (1) and (2) we obtain
\[
|\cJ|= \sum_{i=1}^\ell |J_i| |C_i|\geq\sum_{i=1}^\ell \frac{\lambda}{2} |F_i| |C_i|
\geq \frac{\lambda}{2} (1-\eta)d \geq \frac{\lambda}{4} d
\]
assuming that $\eta\leq 1/2$.
Now whenever we are given $\omega_i \in \{ 1,\dots ,k\}^{J_i}$ for $i=1,\dots,\ell$
we can find, since each $J_i$ is an independence set for $\oU$, an
$h = (h_1,\dots,h_\ell)\in\prod_{i=1}^\ell X^{C_i}$ such that
$sh_i (c) \in U_{\omega_i (s)}$ for all $i=1,\dots,\ell$ and $s\in J_i$.
The maps $\varphi_h$ for such $h$ then witness the fact that $\cJ$ is a
$(\rho,F,\delta,\sigma)$-independence set for $\oU$.
It follows that $\oU$ has positive upper independence density over $\Sigma$.
Hence IE-tuples are $\Sigma$-IE-tuples.

The above argument also shows that IE-tuples are sofic IE-tuples, and so we are done.

One can also give the following direct proof that IE-tuples are orbit IE-tuples.
It suffices to show that if $\oA = (A_1 , \dots ,A_k )$ is a
tuple of subsets of $X$ and $q>0$ is such that for every nonempty finite subset $K$ of $G$ and
$\eps>0$ there exist a nonempty finite subset
$F$ of $G$ with $|KF\setminus F|\le \eps |F|$ and a $J\subseteq F$ with $|J|\ge q|F|$
which is an independence set for $\oA$,
then the independence density of $\oA$ over $G$ is at least $q$.
Let $F_1$ be a nonempty finite subset of $G$. Let $1>\delta>0$.
Take $\eps>0$ be a small number which we shall determine in a moment.
Then there exist a nonempty finite subset
$F$ of $G$ with $|F_1^{-1}F\setminus F|\le \eps |F|$ and a $J\subseteq F$ with $|J|\ge q|F|$ which is an independence set for $\oA$.
Set $F'=\{s\in F: F_1^{-1}s\subseteq F\}$. Taking $\eps$ to be small enough, we have $|F'|\ge (1-\delta)|F|$.
Note that the function $\sum_{s\in F}1_{F_1s}$ has value $|F_1|$ at every point of  $F'$.
Thus
\begin{align*}
\sum_{t\in J\cap F'}\sum_{s\in F}1_{F_1s}(t)=|J\cap F'||F_1|.
\end{align*}
We also have
\begin{align*}
\sum_{t\in J\cap F'}\sum_{s\in F}1_{F_1s}(t)=\sum_{s\in F}\sum_{t\in J\cap F'}1_{F_1s}(t)=\sum_{s\in F}|F_1s\cap (J\cap F')|.
\end{align*}
Therefore we can find an $s\in F$ with
\begin{align*}
|F_1s\cap (J\cap F')|\ge \frac {|J\cap F'||F_1|}{|F|}\ge (q-\delta)|F_1|.
\end{align*}
Since $F_1\cap (J\cap F')s^{-1}$ is an independence set for $\oA$, we deduce that $F_1$ has a subset
of cardinality at least $(q-\delta)|F_1|$ which is an independence set for $\oA$.
Letting $\delta\to 0$, we get that $F_1$ has a subset of cardinality at least $q|F_1|$
which is an independence set for $\oA$.
Therefore the independence density $I(\oA)$ of $\oA$ is at least $q$, and so we conclude that
IE-tuples are orbit IE-tuples.
\end{proof}

The surprising fact above is that IE-tuples are orbit IE-tuples in the amenable case.
It is clear however for a Bernoulli action that all tuples are orbit IE-tuples.
Notice also that the argument above works equally well if in the definition of
$\Sigma$-IE-tuples we use positive upper independence density over $\Sigma$
with respect to a fixed free ultrafilter $\fF$.

\begin{remark}
The product formula for $\IE$-tuples as defined in the amenable framework
was established in Theorem~3.15 of \cite{KerLi07} using a measure-theoretic argument.
We can now combine Theorems~\ref{T-amenable case: orbit IE=IE} and \ref{T-product for orbit IE}
to obtain a combinatorial proof.
\end{remark}

\begin{remark}\label{R-density}
The proof of Theorem~\ref{T-amenable case: orbit IE=IE} shows that the independence density
$I(\oA)$, as defined on page 887 of \cite{KerLi07} and recalled before the theorem statement,
coincides with the independence density defined in
Definition~\ref{D-independence density}. We may thus use the notation $I(\oA)$ without ambiguity
to denote the more general independence density of Definition~\ref{D-independence density}.
\end{remark}

\begin{remark}
When $G$ is amenable, it is clear from the classical $(n,\eps)$-separated set formulation of
topological entropy that the entropy of an action $G\curvearrowright X$
is bounded below by the supremum of $I(\oA) \log k$ over all pairs $(k,\oA)$
where $k\in\Nb$ and $\oA$ is a $k$-tuple of pairwise disjoint closed subsets of $X$.
For Bernoulli actions the two quantities are equal.
In the nonamenable case, the entropy fails in general to be bounded below by $\sup_{(k,\oA)} I(\oA) \log k$,
where $I(\oA)$ is as defined in Remark~\ref{R-density}.
Indeed an example of Ornstein and Weiss \cite[Appendix C]{OrnWei87} shows that the Bernoulli action
$F_2\curvearrowright \{ 0,1 \}^{F_2}$ over the free group on two generators has Bernoulli factors
over arbitrarily large finite sets of symbols, in which case the supremum is infinite.
\end{remark}

We next aim to establish some basic properties of $\Sigma$-IE-tuples in Proposition~\ref{P-basic}.

From Lemma~3.6 of \cite{KerLi07} we obtain:

\begin{lemma}\label{L-decomposition indep}
Let $k\in \Nb$. Then there is a constant $c>0$ depending only on $k$ with the following property.
Let $\oA=(A_1, \dots, A_k)$ be a $k$-tuple of subsets of
$X$ and suppose $A_1 = A_{1, 1}\cup A_{1,2}$.
Let $F$ be a nonempty finite subset of $G$ and $\delta > 0$.
Let $\sigma$ be a map from $G$ to $\Sym (d)$ for some $d\in\Nb$.
If a set $J\subseteq \{ 1,\dots ,d\}$ is a  $(\rho ,F,\delta ,\sigma )$-independence set for $\oA$, then there exists an $I\subseteq J$
such that $|I|\ge c|J|$ and $I$ is a  $(\rho ,F,\delta ,\sigma )$-independence set for
$(A_{1,1}, \dots, A_k)$ or $(A_{1,2}, \dots, A_k)$.
\end{lemma}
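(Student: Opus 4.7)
My plan is to extract the combinatorial core of the problem and then quote Lemma~3.6 of \cite{KerLi07}, as the hint in the statement suggests.

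First, for each function $\omega:J\to\{1,\dots,k\}$, I would fix a witness $\varphi_\omega\in\Map(\rho,F,\delta,\sigma)$ with $\varphi_\omega(a)\in A_{\omega(a)}$ for every $a\in J$, whose existence is guaranteed by the assumed independence of $J$ for $\oA$. At each $a$ with $\omega(a)=1$, the point $\varphi_\omega(a)$ lies in at least one of $A_{1,1}$ and $A_{1,2}$; choosing one, I refine $\omega$ to a function $\tilde\omega:J\to\{0,1,2,\dots,k\}$ by setting $\tilde\omega(a)=0$ if $\varphi_\omega(a)\in A_{1,1}$ and $\tilde\omega(a)=1$ otherwise (when $\omega(a)=1$), and $\tilde\omega(a)=\omega(a)$ when $\omega(a)\ge 2$. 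Let $T$ be the image of $\omega\mapsto\tilde\omega$. After identifying $J$ with $\{1,\dots,|J|\}$, the set $T\subseteq\{0,1,\dots,k\}^{J}$ has the property that under the coordinatewise map $\pi:\{0,1,\dots,k\}\to\{1,\dots,k\}$ sending $0,1\mapsto 1$ and fixing $2,\dots,k$, the projection $\pi(T)$ equals all of $\{1,\dots,k\}^{J}$.

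Next I would invoke Lemma~3.6 of \cite{KerLi07}, which supplies a constant $c>0$ depending only on $k$ together with a subset $I\subseteq J$ of cardinality at least $c|J|$ and a label $b\in\{0,1\}$ such that $T|_{I}\supseteq\{b,2,\dots,k\}^{I}$. I then claim this $I$ is a $(\rho,F,\delta,\sigma)$-independence set for $(A_{1,b+1},A_2,\dots,A_k)$: given any $\xi:I\to\{1,2,\dots,k\}$, relabel the value $1$ as $b$ so as to view $\xi$ as an element of $\{b,2,\dots,k\}^{I}$, lift it to some $\tilde\omega\in T$ extending it, and use the associated $\varphi_\omega$. By construction this witness carries positions $a\in I$ with $\xi(a)=1$ into $A_{1,b+1}$ and positions $a\in I$ with $\xi(a)\ge 2$ into $A_{\xi(a)}$, giving the required realization.

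The only substantive step is the combinatorial Lemma~3.6 of \cite{KerLi07}, which serves as a one-sided variant of the Sauer--Shelah/Karpovsky--Milman inequality: surjectivity of $\pi$ on $T$ is exactly what forces the existence of a sizable $I$ on which the trace $T|_{I}$ contains a full cylinder over a single choice of $b$. Everything else is bookkeeping that transfers the dynamical witnesses $\varphi_\omega$ through the refinement and back.
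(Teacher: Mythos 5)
Your proof is correct and takes essentially the same route as the paper, which deduces the lemma directly from Lemma~3.6 of \cite{KerLi07} without writing out the reduction; your bookkeeping (fixing witnesses $\varphi_\omega$, refining each $\omega$ to $\tilde\omega$ according to whether $\varphi_\omega(a)$ lands in $A_{1,1}$ or $A_{1,2}$, and applying the combinatorial lemma to the resulting family) is exactly the intended argument. The only cosmetic remark is that the statement allows $k=1$, where the cited combinatorial lemma is not needed and a pigeonhole with $c=1/2$ suffices.
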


From Lemma~\ref{L-decomposition indep} we get:

\begin{lemma}\label{L-decomposition E}
Let $\oA= (A_1, \dots, A_k )$ be a $k$-tuple of subsets of $X$ which has
positive upper independence density over $\Sigma$. Suppose that $A_1 = A_{1,
1}\cup A_{1, 2}$. Then at least one of the tuples $(A_{1,1},\dots,
A_k)$ and $(A_{1,2}, \dots, A_k)$ has positive upper  independence density over $\Sigma$.
\end{lemma}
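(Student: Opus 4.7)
The plan is to proceed by contradiction, extracting a subtuple whose independence sets come from applying Lemma~\ref{L-decomposition indep} to the independence sets of $\oA$. The central point is that the Sauer--Shelah-type inequality packaged in Lemma~\ref{L-decomposition indep} produces, from any $(\rho,F,\delta,\sigma)$-independence set $J$ for $\oA$, a subset of size at least $c|J|$ which is a $(\rho,F,\delta,\sigma)$-independence set for one of the two ``halves'' $(A_{1,1},A_2,\dots,A_k)$ or $(A_{1,2},A_2,\dots,A_k)$. The only subtlety is that which half one lands in may depend on the index $i$ and on the pair $(F,\delta)$, so one has to merge two potential ``bad'' choices of test data into a single one.

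More precisely, let $q>0$ witness that $\oA$ has positive upper independence density over $\Sigma$, let $c>0$ be the constant provided by Lemma~\ref{L-decomposition indep} for this $k$, and set $q'=cq$. Assume for contradiction that neither of the subtuples $\oA^{(1)}=(A_{1,1},A_2,\dots,A_k)$ and $\oA^{(2)}=(A_{1,2},A_2,\dots,A_k)$ has positive upper independence density over $\Sigma$. Applied to $q'$, this yields for each $T\in\{1,2\}$ a nonempty finite set $F_T\subseteq G$ and a $\delta_T>0$ such that only finitely many $i$ admit a $(\rho,F_T,\delta_T,\sigma_i)$-independence set for $\oA^{(T)}$ of cardinality at least $q'd_i$.

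Now set $F=F_1\cup F_2$ and $\delta=\min(\delta_1,\delta_2)$. Because $\Map(\rho,F,\delta,\sigma)\subseteq\Map(\rho,F_T,\delta_T,\sigma)$ for $T=1,2$, every $(\rho,F,\delta,\sigma_i)$-independence set for $\oA^{(T)}$ is automatically a $(\rho,F_T,\delta_T,\sigma_i)$-independence set for $\oA^{(T)}$. By the positive upper independence density of $\oA$, there is a cofinal set of $i$ for which $\oA$ has a $(\rho,F,\delta,\sigma_i)$-independence set $J_i\subseteq\{1,\dots,d_i\}$ with $|J_i|\ge q d_i$. For each such $i$, Lemma~\ref{L-decomposition indep} supplies an $I_i\subseteq J_i$ with $|I_i|\ge c|J_i|\ge q' d_i$ which is a $(\rho,F,\delta,\sigma_i)$-independence set for $\oA^{(T_i)}$ for some $T_i\in\{1,2\}$. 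By pigeonhole, the value $T_i$ is the same for a cofinal subset of these $i$'s, giving cofinally many $i$ with a $(\rho,F_{T_i},\delta_{T_i},\sigma_i)$-independence set for $\oA^{(T_i)}$ of size at least $q' d_i$. This contradicts the choice of $(F_{T_i},\delta_{T_i})$, completing the proof.

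The step I expect to require the most care is the merging of test data: one must verify that a single pair $(F,\delta)$ dominating both $(F_1,\delta_1)$ and $(F_2,\delta_2)$ transfers independence in the correct direction, which ultimately comes down to the monotonicity $\Map(\rho,F,\delta,\sigma)\subseteq\Map(\rho,F',\delta',\sigma)$ when $F'\subseteq F$ and $\delta'\ge\delta$. Everything else is a direct application of Lemma~\ref{L-decomposition indep} and a pigeonhole argument, and the same scheme will work verbatim in the ultrafilter variant of positive upper independence density used in Sections~\ref{S-product} and~\ref{S-chaos}, with ``cofinal'' replaced by ``belonging to $\fF$.''
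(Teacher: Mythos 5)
Your proof is correct and is exactly the argument the paper intends: the paper derives this lemma directly from Lemma~\ref{L-decomposition indep} without writing out the details, and your merging of the two test pairs $(F_1,\delta_1)$, $(F_2,\delta_2)$ via the monotonicity of $\Map(\rho,F,\delta,\sigma)$ together with the pigeonhole over a cofinal set is precisely the routine verification being suppressed. Nothing to add.
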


\begin{lemma} \label{L-positive entropy to independence}
$h_\Sigma(X, G)>0$ if and only if there are disjoint closed subsets $A_0$ and $A_1$ of $X$
such that $(A_0, A_1)$ has positive upper  independence density over $\Sigma$.
\end{lemma}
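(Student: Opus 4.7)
The plan is to prove the two implications separately: an elementary $\rho_2$-volume count handles the ``if'' direction, while for the ``only if'' direction I will extract a candidate pair from a finite Borel partition by a Sauer--Shelah-type argument and then disjointify using Lemma~\ref{L-decomposition E}.

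For the ``if'' direction I assume $A_0, A_1$ are disjoint closed with positive upper independence density $q > 0$, and set $\kappa = \rho(A_0, A_1) > 0$. Given $F$, $\delta$ and cofinally many $i$, I pick a $(\rho, F, \delta, \sigma_i)$-independence set $\cJ_i$ with $|\cJ_i| \ge q d_i$ and for each $\omega \in \{0, 1\}^{\cJ_i}$ a witness $\varphi_\omega \in \Map(\rho, F, \delta, \sigma_i)$ sending $\cJ_i$ into the assigned sets. At every coordinate $a \in \cJ_i$ on which $\omega$ and $\omega'$ disagree one has $\rho(\varphi_\omega(a), \varphi_{\omega'}(a)) \ge \kappa$, so the number of $\omega'$ within $\rho_2$-distance $\eta$ of a fixed $\omega$ is at most $\sum_{j \le \eta^2 d_i/\kappa^2}\binom{|\cJ_i|}{j}$. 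For $\eta$ a sufficiently small multiple of $\kappa\sqrt{q}$, Stirling bounds this count by $e^{\beta d_i}$ with $\beta < q \log 2$, so the $\varphi_\omega$'s contain a $(\rho_2, \eta)$-separated subfamily of cardinality at least $2^{q d_i}/e^{\beta d_i}$, which is exponential in $d_i$. Hence $h^\eta_{\Sigma, 2}(\rho, F, \delta) > 0$ and, since $F$ and $\delta$ were arbitrary, $h_\Sigma(X, G) > 0$.

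For the ``only if'' direction I use Proposition~\ref{P-pseudometric} to replace $\rho$ by a compatible metric, fix $\eps > 0$ with $h^\eps_{\Sigma, \infty}(\rho) > 0$, and take a finite Borel partition $\{P_1, \dots, P_k\}$ of $X$ whose parts have $\rho$-diameter less than $\eps/2$. Encoding each $\varphi \in \Map(\rho, F, \delta, \sigma_i)$ by the pattern $\Phi_\varphi \in \{1, \dots, k\}^{\{1, \dots, d_i\}}$ given by $\Phi_\varphi(a) = j$ iff $\varphi(a) \in P_j$, distinct $(\rho_\infty, \eps)$-separated maps produce distinct patterns, so the number of realised patterns grows exponentially in $d_i$. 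A two-letter-shattering strengthening of Lemma~\ref{L-KM} (obtained from the $k = 2$ case together with a pigeonhole over the $\binom{k}{2}$ pairs of labels) then supplies, for cofinally many $i$, a single pair $a_* \ne b_*$ in $\{1, \dots, k\}$ and a subset $I_i \subseteq \{1, \dots, d_i\}$ of size at least $c d_i$ on which every $\{a_*, b_*\}$-valued function is realised by some $\Phi_\varphi$. This says precisely that the disjoint Borel pair $(P_{a_*}, P_{b_*})$, and hence the closed pair $(\overline{P_{a_*}}, \overline{P_{b_*}})$, has positive upper independence density.

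The hard part will be converting this closed pair into a disjoint closed pair, since the closures may meet on boundary points. Setting $B_0 = \overline{P_{a_*}}$, $B_1 = \overline{P_{b_*}}$ and $K = B_0 \cap B_1$, I decompose $B_0$ into the closed pieces $\{x \in B_0 : \rho(x, K) \ge \delta_0\}$ and $\{x \in B_0 : \rho(x, K) \le \delta_0\}$ for a small $\delta_0 > 0$ and invoke Lemma~\ref{L-decomposition E}. The first piece is automatically disjoint from $B_1$, because any point in $B_0 \cap B_1 = K$ sits at distance $0 < \delta_0$ from $K$; so if that alternative retains positive upper independence density we are done. Otherwise the ``near $K$'' alternative does, and performing the symmetric decomposition on $B_1$ either yields the analogous disjoint pair by the same reasoning or delivers a pair of closed sets of positive upper independence density both sitting in a $\delta_0$-neighbourhood of $K$; in that remaining case one splits $K$ itself by a further application of Lemma~\ref{L-decomposition E} and iterates with a strictly smaller common part. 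Since each application of Lemma~\ref{L-decomposition E} preserves positive upper independence density, this bookkeeping terminates in boundedly many steps with a disjoint closed pair $(A_0, A_1)$ of positive upper independence density.
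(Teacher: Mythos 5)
Your ``if'' direction is correct (the paper treats it as obvious), but the ``only if'' direction has two gaps, the second of which is fatal to the approach as written. First, the combinatorial step: after encoding maps by partition patterns you must pass from ``exponentially many patterns in $\{1,\dots,k\}^{\{1,\dots,d_i\}}$'' to ``some fixed pair of letters is shattered on a set of linear size.'' Lemma~\ref{L-KM} gives nothing here unless the pattern count is at least $((k-1)\lambda)^{d_i}$, which exponential growth at a small rate does not supply once $k\geq 3$, and your proposed derivation from ``the $k=2$ case plus a pigeonhole over the $\binom{k}{2}$ pairs'' does not work: after you fix a pair $\{a_*,b_*\}$, the patterns are only partially defined relative to that pair (they take other values on most coordinates), so the $k=2$ case of Karpovsky--Milman is not applicable. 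What is actually needed is a Sauer--Shelah lemma for partially defined Boolean functions, and this is precisely the content of \cite[Lemma~2.3]{GlaWei95}, which is what the paper invokes. You would also need a pigeonhole/diagonalization to make the pair $(a_*,b_*)$ and the density constant independent of $i$ and of $(F,\delta)$, as the paper does for its point $\mathfrak{z}$ and threshold $t$.

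Second and more seriously, the disjointification cannot be repaired along the lines you sketch. Once you replace the disjoint Borel pair $(P_{a_*},P_{b_*})$ by the closed pair $(B_0,B_1)=(\overline{P_{a_*}},\overline{P_{b_*}})$ you have discarded all of the separation content: a pair of closed sets with nonempty intersection $K$ has positive upper independence density as soon as the $1$-tuple $K$ does, since a single map $\varphi$ sending an independence set into $K$ realizes every $\{0,1\}$-colouring simultaneously; so positivity for $(B_0,B_1)$ carries no information about genuine independence between two separated regions. Moreover your iteration makes no progress: since every point of $K$ is at distance $0$ from $K$, one has $K\subseteq B_0''$ and $K\subseteq B_1''$ at every stage, so the ``common part'' of the surviving pair never shrinks and there is no justification for termination. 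The paper avoids the issue entirely by working with the real-valued functions $a\mapsto\rho(\mathfrak{z},\varphi(a))$ for a single well-chosen $\mathfrak{z}$ and applying \cite[Lemma~2.3]{GlaWei95}, which produces a threshold $t$ together with a gap $\delta>0$, so that $A_0=\{x:\rho(\mathfrak{z},x)\geq t+\delta/2\}$ and $A_1=\{x:\rho(\mathfrak{z},x)\leq t-\delta/2\}$ are closed and disjoint by construction. If you wish to keep a partition-based argument, the gap must be produced at the combinatorial stage rather than recovered afterwards from the closures.
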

\begin{proof} Let $\rho$ be a compatible metric on $X$ with $\diam_\rho(X)\le 1$.
Then $h_{\Sigma, \infty}(\rho)=h_\Sigma(X, G)$. The ``if'' part is obvious.
So assume $h_{\Sigma, \infty}(\rho)>0$.
Then $h^{6\eps}_{\Sigma, \infty}(\rho)>0$ for some $\eps>0$. Set $c=h^{6\eps}_{\Sigma, \infty}(\rho)/2$.

Take a finite $(\rho, 2\eps)$-dense subset $Z$ of $X$.
Consider on $X$ the continuous pseudometrics $\rho^z$, for $z\in Z$, and $\rho'$ given by
\[ \rho^z(x, y)=|\rho(x, z)-\rho(y, z)|, \hspace*{5mm} \rho'(x, y)=\max_{z\in Z}\rho^z(x, y).\]
Note that if $\rho(x, y)\ge 6\eps$ for some $x, y\in X$, then $\rho'(x, y)\ge 2\eps$.
It follows that if
$d\in \Nb$ and $\varphi$ and $\psi$ are maps from $\{1, \dots, d\}$ to $X$
with $\rho_{\infty}(\varphi, \psi)\ge 6\eps$,
then $\rho'_{\infty}(\varphi, \psi)\ge 2\eps$.

Take an increasing sequence $\{F_n\}_{n\in \Nb}$ of nonempty finite subsets of $G$ with union $G$
and a decreasing sequence $\{\delta_n\}_{n\in \Nb}$ of positive numbers converging to $0$.
For each $n\in \Nb$, there is a cofinal set $I_n$ of $i$ for which one has
$N_{6\eps}(\Map(\rho, F_n, \delta_n, \sigma_i), \rho_\infty)\ge \exp(c d_i)$.
Then $N_{2\eps}(\Map(\rho, F_n, \delta_n, \sigma_i), \rho'_\infty)\ge \exp(c d_i)$
for all $i\in I_n$. For each $i\in I_n$ and $z\in Z$ take
a $(\rho^z_\infty, \eps)$-separated subset
$W_{i, z}$ of $\Map(\rho, F_n, \delta_n, \sigma_i)$ of maximum cardinality. Then
\[
N_{2\eps}(\Map(\rho, F_n, \delta_n, \sigma_i), \rho'_\infty)
\le \prod_{z\in Z}|W_{i, z}|
= \prod_{z\in Z}N_\eps(\Map(\rho, F_n, \delta_n, \sigma_i), \rho^z_\infty).
\]
Thus $N_\eps(\Map(\rho, F_n, \delta_n, \sigma_i), \rho^{z_{n, i}}_\infty)\ge \exp(c d_i/|Z|)$
for some $z_{n, i}\in Z$. Replacing $I_n$ by a confinal subset if necessary,
we may assume that $z_{n, i}$ is the same, say $z_n$, for all $i\in I_n$.
Passing to a subsequence of $\{(F_n, \delta_n)\}_{n \in \Nb}$ if necessary,
we may assume that $z_n$ is the same, say $\mathfrak{z}$, for all $n\in \Nb$.

Note that if $W$ is a $(\rho^{\mathfrak{z}}_\infty, \eps)$-separated subset of
$\Map(\rho, F_n, \delta_n, \sigma_i)$, then
the set $\{ \rho(\mathfrak{z}, \cdot)\circ  \varphi: \varphi \in W\}$ in $\ell_{\infty}^{d_i}$ is
$(\|\cdot \|_\infty, \eps)$-separated.
By \cite[Lemma 2.3]{GlaWei95}, there are constants $c'$ and $\delta>0$ depending only on $c/|Z|$
and $\eps$ such that for every $n\in \Nb$ and large enough
$i\in I_n$ there are a $t_{n, i}\in [0, 1]$ and a subset $J_{n, i}$ of $\{1, \dots, d_i\}$
with $|J_{n, i}|\ge c'd_i$ so that for every $\omega: J_{n, i}\rightarrow \{0, 1\}$
there are a $\varphi_\omega\in \Map(\rho, F_n, \delta_n, \sigma_i)$ such that for
all $a\in J_{n, i}$ we have $\rho(\mathfrak{z}, \varphi_\omega(a))\ge t_{n, i}+\delta$ or
$\rho(\mathfrak{z}, \varphi_\omega(a))\le t_{n, i}-\delta$ depending on
whether $\omega(a)=0$ or $\omega(a)=1$. Replacing $I_n$ by a confinal subset if necessary,
we may assume that there is a $t_n\in [0, 1]$ such that $|t_{n, i}-t_n|<\delta/4$ for all $i\in I_n$.
Replacing $\{F_n, \delta_n\}_{n\in \Nb}$ by a subsequence if necessary, we may assume that
there is a $t\in [0, 1]$ such that $|t_n-t|<\delta/4$ for all $n\in \Nb$.
Set $A_0=\{x\in X: \rho(\mathfrak{z}, x)\ge t+\delta/2\}$ and
$A_1=\{x\in X: \rho(\mathfrak{z}, z)\le t-\delta/2\}$.
Then for every $n\in \Nb$ and $i\in I_n$, the set $J_{n, i}$ is a
$(\rho, F_n, \delta_n, \sigma_i)$-independence set for $(A_0, A_1)$.
Thus $(A_0, A_1)$ has positive upper independence density over $\Sigma$.
\end{proof}

The following is obvious.

\begin{lemma} \label{L-nonnegative entropy}
$h_\Sigma(X, G)\ge 0$ if and only if $X$ as a $1$-tuple has positive upper independence density over $\Sigma$.
\end{lemma}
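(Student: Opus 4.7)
The plan is to show that each side of the equivalence unpacks to the single condition that, for every nonempty finite $F\subseteq G$ and every $\delta>0$, the set $\Map(\rho,F,\delta,\sigma_i)$ is nonempty for infinitely many $i$, where $\rho$ is any fixed dynamically generating continuous pseudometric on $X$.

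For the independence density side, I would unfold Definition~\ref{D-positive independence density} in the case $k=1$: a $(\rho,F,\delta,\sigma_i)$-independence set for the $1$-tuple $(X)$ is any $\cJ\subseteq\{1,\dots,d_i\}$ such that for every $\omega:\cJ\to\{1\}$—of which there is exactly one—there is a $\varphi\in\Map(\rho,F,\delta,\sigma_i)$ with $\varphi(a)\in X$ for all $a\in\cJ$. The inclusion $\varphi(a)\in X$ is automatic, so once $\Map(\rho,F,\delta,\sigma_i)\neq\emptyset$ the whole set $\{1,\dots,d_i\}$ serves as an independence set, giving cardinality $d_i=1\cdot d_i$. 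Hence positive upper independence density of $(X)$ over $\Sigma$ (with witness $q=1$) is equivalent to the nonemptiness condition above.

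For the entropy side, I would chase Definition~\ref{D-topological entropy} together with the conventions in the definition of $h^\eps_{\Sigma,2}(\rho,F,\delta)$: if $\Map(\rho,F,\delta,\sigma_i)\neq\emptyset$ for infinitely many $i$, then $N_\eps(\Map(\rho,F,\delta,\sigma_i),\rho_2)\geq 1$ for those $i$, so the limit supremum defining $h^\eps_{\Sigma,2}(\rho,F,\delta)$ is at least $0$; conversely, if $\Map(\rho,F,\delta,\sigma_i)$ is eventually empty, then by definition $h^\eps_{\Sigma,2}(\rho,F,\delta)=-\infty$, and so both $h^\eps_{\Sigma,2}(\rho)$ and $h_{\Sigma,2}(\rho)$ are $-\infty$ as well (the infimum over $F,\delta$ drags everything down). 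Since the nonemptiness condition is independent of $\eps$, taking $\sup_\eps \inf_F \inf_\delta$ shows that $h_\Sigma(X,G)\geq 0$ is equivalent to nonemptiness of $\Map(\rho,F,\delta,\sigma_i)$ cofinally often for every $F$ and $\delta$.

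Combining these two reductions gives the equivalence. There is no real obstacle; the lemma simply records the tautology that for $k=1$ the independence condition imposes no constraint on the maps beyond existence.
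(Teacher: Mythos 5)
Your argument is correct and is exactly the unpacking the paper has in mind — the paper offers no proof at all, labelling the lemma "obvious," and your reduction of both sides to the cofinal nonemptiness of $\Map(\rho,F,\delta,\sigma_i)$ (using the $-\infty$ convention for eventually empty model spaces) is the intended justification.
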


\begin{proposition}\label{P-basic}
The following are true:
\begin{enumerate}
\item Let $(A_1, \dots , A_k )$ be a tuple of closed subsets of $X$ which has positive upper
independence density over $\Sigma$. Then there exists a $\Sigma$-IE-tuple $(x_1,\dots ,
x_k)$ with $x_j\in A_j$ for all $1\le j\le k$.

\item $\IE_1^\Sigma(X, G)$ is nonempty if and only if $h_\Sigma(X, G)\ge 0$.

\item $\IE_2^\Sigma(X, G)\setminus \Delta_2(X)$ is nonempty if and only if $h_\Sigma(X, G)>0$,
where $\Delta_2 (X)$ denotes the diagonal in $X^2$.

\item $\IE_k^\Sigma(X, G)$ is a closed subset of $X^k$ which is invariant under the product action.

\item Let $\pi:(X, G)\rightarrow (Y,G)$ be a factor map. Then
$(\pi\times\cdots\times \pi )(\IE_k^\Sigma(X, G))\subseteq \IE_k^\Sigma(Y, G)$.

\item Suppose that $Z$ is a closed $G$-invariant subset of $X$. Then
$\IE_k^\Sigma(Z, G )\subseteq \IE_k^\Sigma(X, G)$.
\end{enumerate}
\end{proposition}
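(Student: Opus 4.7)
The plan is to dispatch the structural parts (4), (5), (6) by direct transport of independence sets, prove (1) by iterated coordinate refinement using Lemma~\ref{L-decomposition E}, and finally deduce (2) and (3) from (1) together with Lemmas~\ref{L-nonnegative entropy} and \ref{L-positive entropy to independence}.

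For the closedness in (4), if $\ox^{(n)}\to\ox$ with each $\ox^{(n)}\in\IE_k^\Sigma(X,G)$, then any product neighbourhood of $\ox$ eventually contains some $\ox^{(n)}$ and hence has positive upper independence density. For the $G$-invariance, given $s\in G$ and a product neighbourhood $(U_1,\dots,U_k)$ of $s\ox$, apply positivity of upper density to the neighbourhood $(s^{-1}U_1,\dots,s^{-1}U_k)$ of $\ox$: from a $(\rho,F,\delta,\sigma_i)$-independence set $\cJ$ for $(s^{-1}U_j)$ with witnesses $\psi_\omega$, form $\tilde\cJ:=\sigma_{i,s}(\cJ)$ (of the same size, since $\sigma_{i,s}$ is a permutation) and, for each $\omega':\tilde\cJ\to\{1,\dots,k\}$, define
\[
\varphi_{\omega'}(b)\;:=\;s\,\psi_{\omega'\circ\sigma_{i,s}}\bigl(\sigma_{i,s}^{-1}(b)\bigr).
\]
The containment $\varphi_{\omega'}(b)\in U_{\omega'(b)}$ for $b\in\tilde\cJ$ is a direct check, while $\varphi_{\omega'}\in\Map(\rho,F',\delta',\sigma_i)$ for any prescribed $F',\delta'$ holds once $F\supseteq\{s^{-1}ts:t\in F'\}\cup\{s,s^{-1}\}$, $\delta$ is sufficiently small, and $\sigma_i$ is a good enough sofic approximation, exploiting the approximate identity $\sigma_{i,s}^{-1}\sigma_{i,t}\sigma_{i,s}(c)=\sigma_{i,s^{-1}ts}(c)$ on most $c$ together with the uniform continuity of $\alpha_s$ on $X$. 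For (5), by Lemma~\ref{L-change pseudometric} it is harmless to choose a dynamically generating pseudometric on $X$ of the form $\rho_X+\rho_Y\circ\pi$ for a compatible metric $\rho_Y$ on $Y$; then $\pi\circ\psi\in\Map_Y(\rho_Y,F,\delta,\sigma_i)$ whenever $\psi\in\Map_X(\rho,F,\delta,\sigma_i)$, and independence sets for $(\pi^{-1}V_j)$ in $X$ transfer through composition with $\pi$ to independence sets for $(V_j)$ in $Y$. For (6), the inclusion $\Map_Z(\rho|_Z,F,\delta,\sigma_i)\subseteq\Map_X(\rho,F,\delta,\sigma_i)$ is automatic, so every $(\rho|_Z,F,\delta,\sigma_i)$-independence set for $(U_j\cap Z)$ in $Z$ is also a $(\rho,F,\delta,\sigma_i)$-independence set for $(U_j)$ in $X$.

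For (1), fix a compatible metric on $X$ and construct a nested sequence of closed $k$-tuples $(B_1^{(n)},\dots,B_k^{(n)})$ with $B_j^{(n)}\subseteq A_j$, each having positive upper independence density over $\Sigma$, and with $\max_j\diam(B_j^{(n)})\to 0$. At stage $n$, for each coordinate $j$ in turn cover the current $B_j^{(n-1)}$ by finitely many closed subsets of diameter at most $1/n$ and apply Lemma~\ref{L-decomposition E} iteratively to replace the $j$th coordinate by a single piece retaining positive upper independence density. By compactness $\bigcap_n B_j^{(n)}=\{x_j\}$ is a single point of $A_j$, and any product neighbourhood of $(x_1,\dots,x_k)$ contains $(B_1^{(n)},\dots,B_k^{(n)})$ for large $n$ and hence inherits positive upper independence density, making $(x_1,\dots,x_k)$ a $\Sigma$-IE-tuple. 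Part (2) then follows from Lemma~\ref{L-nonnegative entropy} combined with (1) applied to the $1$-tuple $X$, and part (3) from Lemma~\ref{L-positive entropy to independence} together with (1) applied to the disjoint closed pair furnished by that lemma; the reverse implications in (2) and (3) are immediate, using in (3) that a nondiagonal $\Sigma$-IE-pair admits disjoint closed neighbourhoods whose product tuple inherits positive upper independence density.

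The main obstacle is the $G$-invariance in (4): the attempted witness $\varphi=s\,\psi\,\sigma_{i,s}^{-1}$ lies in $\Map$ only after conjugating $\sigma_{i,s}$ through $\sigma_{i,t}$, which forces $s^{-1}ts$ into the approximation set and juggles the approximate multiplicativity of $\Sigma$ against the uniform continuity of $x\mapsto sx$. Once $F=\{s^{-1}ts:t\in F'\}\cup\{s,s^{-1}\}$ is chosen and $\delta$ is small enough that the $\rho_2$-defects $\rho_2(\psi\sigma_{s^{-1}ts},\alpha_{s^{-1}ts}\psi)$ translate (after composition with $\alpha_s$ and use of $\sigma_{i,s}^{-1}\sigma_{i,t}\sigma_{i,s}\approx\sigma_{i,s^{-1}ts}$) into the required bound $\rho_2(\varphi\sigma_t,\alpha_t\varphi)<\delta'$, the invariance falls out.
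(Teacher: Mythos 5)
Your proof is correct and follows essentially the same route as the paper, which only sketches these arguments: (1) via Lemma~\ref{L-decomposition E} plus compactness, (2) and (3) from (1) together with Lemmas~\ref{L-nonnegative entropy} and \ref{L-positive entropy to independence}, and (4) via the transported map $\alpha_s\circ\psi\circ\sigma_{i,s}^{-1}$ (the paper uses $\alpha_s\circ\varphi\circ\sigma_{s^{-1}}$ with $F=\{s^{-1}\}\cup s^{-1}F'$, a cosmetic difference in bookkeeping), with (5) and (6) handled by the same direct transport of independence sets that the paper dismisses as trivial.
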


\begin{proof}
Assertion (1) follows from Lemma~\ref{L-decomposition E} and a
simple compactness argument.
Assertion (2) follows from assertion (1) and Lemma~\ref{L-nonnegative entropy}.
Assertion (3)
follows directly from assertion (1) and Lemma~\ref{L-positive entropy to independence}.
Assertion (4) follows from the observation that, given a compatible metric $\rho$ of $X$,
for any $s\in G$, nonempty finite subset $F$ of $G$, and $\delta>0$ there is a $\delta'>0$ such
that, for every $d\in \Nb$ and map $\sigma: G\rightarrow \Sym(d)$
which is a good enough sofic approximation for $G$,
if $\varphi \in \Map(\rho, \{s^{-1}\}\cup (s^{-1}F), \delta', \sigma)$,
then $\alpha_s\circ \varphi\circ \sigma_{s^{-1}}\in \Map(\rho, F, \delta, \sigma)$,
where $\alpha_s$ is the transformation $x\mapsto sx$ of $X$.
Assertions (5) and (6) are trivial.
\end{proof}

\begin{remark}
The inclusion in (5) above is an equality when $G$ is amenable, since $\Sigma$-IE-tuples
are the same as IE-tuples by Theorem~\ref{T-amenable case: orbit IE=IE}. Equality can fail
however if $G$ is nonamenable: Take an action $G\curvearrowright X$ with $h_\Sigma (X,G)=-\infty$
and an action $G\curvearrowright Y$ with $h_\Sigma (Y,G) > 0$. Then $G\curvearrowright Y$
has a nondiagonal $\Sigma$-IE-pair, while the product action
$G\curvearrowright X\times Y$, which factors onto $G\curvearrowright Y$ via the second coordinate projection,
satisfies $h_\Sigma (X\times Y,G)=-\infty$ and hence has no
nondiagonal $\Sigma$-IE-pairs.
\end{remark}

\begin{remark}
The analogue for orbit IE-tuples of the localization in Proposition~\ref{P-basic}(1) does not hold
in the nonamenable case. Indeed for any action $G\curvearrowright X$ of a discrete group the $1$-tuple $X$
has positive independence density, while the boundary action $F_2 \curvearrowright \partial F_2$
of the free group on two generators (where $\partial F_2$ consists of infinite reduced words in the
standard generators and their inverses, with the action by left concatenation and reduction)
is easily seen not to admit any orbit IE-$1$-tuples.
\end{remark}

From Proposition~\ref{P-basic}(5) we get the following. As in Theorem~\ref{T-product for orbit IE},
the inclusion below is understood with respect to the identification of
$((x_1,\dots,x_k),(y_1,\dots,y_k))\in X^k \times Y^k$ and $((x_1,y_1),\dots,(x_k,y_k))\in (X\times Y)^k$.

\begin{proposition} \label{P-product for IE}
$\IE^\Sigma_k (X\times Y,G)\subseteq \IE^\Sigma_k (X,G) \times \IE^\Sigma_k (Y,G)$.
\end{proposition}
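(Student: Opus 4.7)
The plan is to apply Proposition~\ref{P-basic}(5) to the two coordinate projections of the product system. Concretely, let $\pi_X : X\times Y \to X$ and $\pi_Y : X\times Y \to Y$ be the canonical projections. Each is a continuous $G$-equivariant surjection, hence a factor map from $(X\times Y, G)$ onto $(X, G)$ and $(Y, G)$ respectively.

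Now suppose $((x_1,y_1),\dots,(x_k,y_k)) \in \IE_k^\Sigma(X\times Y, G)$; under the stated identification this is the same datum as a pair $((x_1,\dots,x_k),(y_1,\dots,y_k))\in X^k\times Y^k$ that lies in $\IE_k^\Sigma(X\times Y,G)$. Applying Proposition~\ref{P-basic}(5) to $\pi_X$ gives
\[
(\pi_X\times\cdots\times\pi_X)\bigl((x_1,y_1),\dots,(x_k,y_k)\bigr) = (x_1,\dots,x_k) \in \IE_k^\Sigma(X,G),
\]
and similarly applying it to $\pi_Y$ yields $(y_1,\dots,y_k) \in \IE_k^\Sigma(Y,G)$. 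Combining these two memberships is exactly the assertion that the tuple lies in $\IE_k^\Sigma(X,G)\times \IE_k^\Sigma(Y,G)$.

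There is no real obstacle here: once one observes that the coordinate projections are factor maps for the product action, the proposition is a one-line consequence of the factor-map monotonicity of $\Sigma$-IE-tuples already established in Proposition~\ref{P-basic}(5). (Note that the reverse inclusion, which would give the full product formula, is the nontrivial direction and is not asserted at this point; it is precisely what Section~\ref{S-product} addresses under the ergodicity hypothesis on the commutant action.)
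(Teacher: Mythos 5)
Your argument is exactly the paper's: the inclusion is stated there as an immediate consequence of Proposition~\ref{P-basic}(5) applied to the two coordinate projections, which are factor maps of the product action. The proof is correct and matches the paper's approach.
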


The problem of the reverse inclusion will be taken up in the next section.

For the remainder of this section $X$ is the unit ball of $\ell^p(G)$ for
some $1\le p<\infty$ equipped with the pointwise convergence topology,
and the action $G\curvearrowright X$ is by left shifts.
We will use some of the above results to compute the
sofic topological entropy of this action to be zero when $G$ is infinite.

Recall from the end of Section~\ref{S-orbit} that a tuple $\ox = (x_1 , \dots , x_k )\in X^k$ is an IN-tuple
if for every product neighbourhood $U_1 \times\dots\times U_k$ of $\ox$ the tuple
$(U_1 ,\dots, U_k )$ has arbitrarily large finite independence sets.
We write $\IN_k (X,G)$ for the set of IN-tuples of length $k$.

\begin{lemma} \label{L-unit ball null}
For every $k\in\Nb$ the set $\IN_k(X,G)$ consists of the single element $(0, \dots, 0)$.
\end{lemma}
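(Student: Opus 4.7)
The plan is to establish the two inclusions separately. For the easy direction I would check that $(0,\dots,0)$ is an IN-tuple: because $0\in X$ is fixed by the action and lies in every neighbourhood $U_i$ of itself, the point $0$ belongs to $\bigcap_{s\in J} s^{-1}U_{\omega(s)}$ for every finite $J\subseteq G$ and every $\omega:J\to\{1,\dots,k\}$. Thus every finite subset of $G$ is an independence set for $(U_1,\dots,U_k)$, so arbitrarily large independence sets trivially exist.

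For the substantive direction I would argue by contradiction: suppose $(x_1,\dots,x_k)\in X^k$ is an IN-tuple with some coordinate nonzero. Without loss of generality take $x_1\neq 0$ and pick $g\in G$ with $a:=|x_1(g)|>0$. Then I would select the basic product neighbourhood given by
\[
U_i=\{y\in X:|y(g)-x_i(g)|<a/2\}, \qquad i=1,\dots,k,
\]
and aim to show that every independence set $J\subseteq G$ for $(U_1,\dots,U_k)$ has cardinality bounded by a constant depending only on $a$ and $p$, which will contradict the IN-tuple hypothesis.

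To produce such a bound I would not even need to use the full independence condition: testing only against the constant colouring $\omega\equiv 1$, one obtains a single $y\in X$ with $s\cdot y\in U_1$ for all $s\in J$, that is, $|y(s^{-1}g)-x_1(g)|<a/2$ and hence $|y(s^{-1}g)|>a/2$ for every $s\in J$. Since right multiplication by $g$ is injective on $G$, the points $\{s^{-1}g:s\in J\}$ are pairwise distinct, so that
\[
1\geq \|y\|_p^p\geq \sum_{s\in J}|y(s^{-1}g)|^p > |J|(a/2)^p,
\]
giving $|J|<(2/a)^p$. This contradicts the existence of arbitrarily large independence sets, so no nonzero tuple can be an IN-tuple.

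I do not anticipate any real obstacle here: the proof reduces to a pigeonhole argument against the $\ell^p$-norm constraint together with the freeness of left multiplication of $G$ on itself, and the combinatorial subtlety of the IN condition (varying $\omega$) is not used — a single constant colouring already forces the contradiction.
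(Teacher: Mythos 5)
Your proof is correct and follows essentially the same route as the paper: the authors reduce to showing $\IN_1(X)\subseteq\{0\}$ (which is exactly your observation that only the constant colouring is needed) and then run the identical $\ell^p$-norm pigeonhole bound $|J|\le (2/a)^p$, the only cosmetic difference being that they translate by a group element to place the nonzero coordinate at $e$ and invoke $G$-invariance of $\IN_1$, whereas you work directly at the coordinate $g$.
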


\begin{proof}
Clearly $(0, \dots, 0)\in \IN_k(X)$ for every $k\in \Nb$.
Also note that if $\ox=(x_1,\dots, x_k)\in \IN_k(X)$ then $x_1, \dots, x_k\in \IN_1(X)$.
Thus it suffices to show $\IN_1(X)\subseteq \{0\}$.

Let $x\in X$ with $x\neq 0$. Then $(tx)_e\neq 0$ for some $t\in G$. Set $r=|(tx)_e|/2>0$ and $U=\{y\in X: |y_e|\ge r\}$.
Then $U$ is a neighborhood of $tx$ in $X$.
Let $F\subseteq G$ be a finite independence set for $U$. Then $\bigcap_{s\in F} s^{-1}U$ is nonempty.
Take $y\in \bigcap_{s\in F} s^{-1}U$.
Then $sy\in U$ and hence $|y_{s^{-1}}|=|(sy)_e|\ge r$ for every $s\in F$. It follows that
\[
|F|r^p\le \sum_{s\in F}|y_{s^{-1}}|^p\le \|y\|_p^p\le 1,
\]
and hence $|F|\le r^{-p}$. Therefore $tx\not\in \IN_1(X)$. Since $\IN_1(X)$ is $G$-invariant, $x\not\in \IN_1(X)$.
\end{proof}

\begin{proposition} \label{P-unit ball zero entropy}
Suppose that $G$ is infinite. Then $h_\Sigma(X, G)=0$.
\end{proposition}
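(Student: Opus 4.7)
The plan is to sandwich $h_\Sigma(X, G)$ between two bounds using the results already developed in the paper, with Lemma~\ref{L-unit ball null} supplying the upper bound and an explicit fixed-point argument supplying the lower bound.

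For the lower bound $h_\Sigma(X, G) \geq 0$, I would show directly that the origin $0 \in X$ is a $\Sigma$-IE-$1$-tuple. Since $0$ is fixed by every $\alpha_s$, the constant map $\varphi_0 : \{1, \dots, d\} \to X$ sending every $a$ to $0$ satisfies $\varphi_0 \sigma_s = \alpha_s \varphi_0$ identically, so $\varphi_0 \in \Map(\rho, F, \delta, \sigma)$ for any choice of $\rho$, $F$, $\delta$, and $\sigma$. Thus for any neighborhood $U$ of $0$, the entire set $\{1, \dots, d\}$ is a $(\rho, F, \delta, \sigma)$-independence set for the $1$-tuple $(U)$, witnessing positive upper independence density with constant $q = 1$. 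Hence $0 \in \IE_1^\Sigma(X, G)$, and Proposition~\ref{P-basic}(2) gives $h_\Sigma(X, G) \geq 0$.

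For the upper bound $h_\Sigma(X, G) \leq 0$, I would argue by contradiction. Suppose $h_\Sigma(X, G) > 0$. Then Proposition~\ref{P-basic}(3) produces a nondiagonal $\Sigma$-IE-pair $(x, y) \in X^2 \setminus \Delta_2(X)$. By Proposition~\ref{P-sofic IE to orbit IE}, $(x, y)$ is an orbit IE-pair, and since $G$ is infinite, Proposition~\ref{P-orbit IE to IN} forces $(x, y) \in \IN_2(X, G)$. But Lemma~\ref{L-unit ball null} states that $\IN_2(X, G) = \{(0, 0)\}$, contradicting the nondiagonality of $(x, y)$. Combining the two bounds yields $h_\Sigma(X, G) = 0$.

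There is no serious obstacle here, as the substantive work has already been done: Lemma~\ref{L-unit ball null} encodes the combinatorial rigidity of the $\ell^p$ unit ball via the independence set size bound $|F| \leq r^{-p}$, while the implication chain $\Sigma\text{-IE} \Rightarrow \text{orbit IE} \Rightarrow \text{IN}$ transports this rigidity to the sofic setting. The only minor subtlety is confirming that $0$ genuinely lies in $\IE_1^\Sigma(X, G)$ rather than merely in the closure of some nontrivial construction, and this is handled painlessly by the constant-map argument above.
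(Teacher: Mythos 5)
Your proof is correct and follows essentially the same route as the paper: the upper bound via the chain $\Sigma$-IE $\Rightarrow$ orbit IE $\Rightarrow$ IN together with Lemma~\ref{L-unit ball null}, and the lower bound from the fixed point $0$. Your explicit constant-map verification of the lower bound and your use of Proposition~\ref{P-basic}(3) for the pair argument are just slightly more detailed versions of what the paper states in one line.
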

\begin{proof}
By Lemma~\ref{L-unit ball null}, Propositions~\ref{P-basic}(2), and Propositions~\ref{P-sofic IE to orbit IE}
and \ref{P-orbit IE to IN}, we have $h_\Sigma(X, G)\le 0$.
Since $X$ has the fixed point $0$, we have $h_\Sigma(X, G)\ge 0$.
Therefore $h_\Sigma(X, G)=0$.
\end{proof}

\section{Product Formula for IE-tuples}\label{S-product}

In order to hope for a product formula for $\Sigma$-IE-tuples beyond the amenable case,
we must be able to witness independence density in some uniform way,
in analogy with the definition of orbit IE-tuples in Section~\ref{S-orbit}
(see Theorem~\ref{T-product for orbit IE}).
This can be achieved
by taking a free ultrafilter $\fF$ on $\Nb$ and requiring that the independence sets
in Definition~\ref{D-positive independence density} exist for a set of $i$ belonging to $\fF$
instead a cofinal set of $i$.
Thus for the purposes of this section we fix a free ultrafilter $\fF$ on $\Nb$ and switch to
definition of $\Sigma$-IE-tuples based on this interpretation of positive density.
We will similarly understand sofic
topological entropy to be defined by using an ultralimit over $\fF$ in Definition~\ref{D-topological entropy}
instead of the limit supremum. We do not know whether our product formula results,
Proposition~\ref{P-entropy for product} and Theorem~\ref{T-ergodic to product},
hold for the original definitions.

For the first part of our discussion, up to and including Lemma~\ref{L-tuple of product sets}, $G$ is a
countable sofic group and $\Sigma=\{\sigma_i: G\rightarrow \Sym(d_i)\}_{i=1}^\infty$ a fixed but arbitrary
sofic approximation sequence for $G$.

\begin{proposition}\label{P-entropy for product}
Let $G$ act continuously on compact metrizable spaces $X$ and $Y$. Then
\[
h_\Sigma(X\times Y, G)=h_\Sigma(X, G)+h_\Sigma(Y, G).
\]
\end{proposition}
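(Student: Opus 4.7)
The plan is to compute both sides via the $\rho_\infty$-formulation for a product pseudometric built from compatible metrics on the factors. Fix compatible metrics $\rho_X$ on $X$ and $\rho_Y$ on $Y$ and define the compatible metric $\rho$ on $X\times Y$ by
\[
\rho((x,y),(x',y'))=\max(\rho_X(x,x'),\rho_Y(y,y')).
\]
Writing any $\varphi:\{1,\dots,d\}\to X\times Y$ as a pair $(\varphi_X,\varphi_Y)$, this choice yields $\rho_\infty(\varphi,\psi)=\max(\rho_{X,\infty}(\varphi_X,\psi_X),\rho_{Y,\infty}(\varphi_Y,\psi_Y))$, while the pointwise bound $\max(a,b)^2\le a^2+b^2$ gives $\rho_2^2\le\rho_{X,2}^2+\rho_{Y,2}^2$. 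From these I read off the inclusions
\[
\Map(\rho_X,F,\delta/\sqrt{2},\sigma)\times\Map(\rho_Y,F,\delta/\sqrt{2},\sigma)\subseteq\Map(\rho,F,\delta,\sigma)\subseteq\Map(\rho_X,F,\delta,\sigma)\times\Map(\rho_Y,F,\delta,\sigma),
\]
valid for every $F,\delta,\sigma$ under the coordinatewise identification.

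For the upper bound I fix $\eps,F,\delta$ and a $(\rho_\infty,\eps)$-separated subset $S$ of $\Map(\rho,F,\delta,\sigma)$, then pick maximal $(\rho_{X,\infty},\eps/3)$- and $(\rho_{Y,\infty},\eps/3)$-separated subsets $W_X\subseteq\Map(\rho_X,F,\delta,\sigma)$ and $W_Y\subseteq\Map(\rho_Y,F,\delta,\sigma)$, which are automatically $(\eps/3)$-spanning. The assignment $\varphi\mapsto(w_X,w_Y)$, with $w_X\in W_X$ and $w_Y\in W_Y$ approximating $\varphi_X$ and $\varphi_Y$ within $\eps/3$, is injective, since two elements of $S$ with the same image would satisfy $\rho_\infty=\max(\rho_{X,\infty},\rho_{Y,\infty})<2\eps/3<\eps$. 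Hence $|S|\le|W_X||W_Y|$, and after taking logarithms, dividing by $d_i$, and passing to $\lim_\fF$---which, unlike $\limsup$, is genuinely additive on $[-\infty,+\infty)$---I obtain
\[
h_{\Sigma,\infty}^{\eps}(\rho,F,\delta)\le h_{\Sigma,\infty}^{\eps/3}(\rho_X,F,\delta)+h_{\Sigma,\infty}^{\eps/3}(\rho_Y,F,\delta).
\]
For the lower bound, choose $(\rho_{X,\infty},\eps)$-separated $W_X\subseteq\Map(\rho_X,F,\delta/\sqrt{2},\sigma)$ and $(\rho_{Y,\infty},\eps)$-separated $W_Y\subseteq\Map(\rho_Y,F,\delta/\sqrt{2},\sigma)$. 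By the left inclusion above the product $W_X\times W_Y$ sits inside $\Map(\rho,F,\delta,\sigma)$, and any two distinct elements differ by at least $\eps$ in at least one coordinate and therefore also in the max, so $W_X\times W_Y$ is $(\rho_\infty,\eps)$-separated. Ultrafilter additivity again yields
\[
h_{\Sigma,\infty}^{\eps}(\rho,F,\delta)\ge h_{\Sigma,\infty}^{\eps}(\rho_X,F,\delta/\sqrt{2})+h_{\Sigma,\infty}^{\eps}(\rho_Y,F,\delta/\sqrt{2}).
\]

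The last step is to take $\inf_\delta$, $\inf_F$, and $\sup_\eps$ on both sides of each inequality. Each of the three quantities involved is monotone in each of these variables (nondecreasing in $\delta$, nonincreasing in $F$ under inclusion, nonincreasing in $\eps$), so the corresponding infima and suprema are directed limits along chains of real numbers and limits of sums equal sums of limits. Invoking Proposition~\ref{P-pseudometric} to identify each resulting limit with the appropriate topological entropy, the two displayed inequalities convert to $h_\Sigma(X\times Y,G)\le h_\Sigma(X,G)+h_\Sigma(Y,G)$ and its reverse, finishing the proof. The one genuine subtlety---and the reason the section replaces $\limsup$ by $\lim_\fF$ in the definition of entropy---is precisely this interchange of sums and limits: mere subadditivity of $\limsup$ would deliver only one direction, whereas $\lim_\fF$ is fully additive on the relevant interval.
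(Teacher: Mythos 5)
Your proof is correct and follows essentially the same route as the paper's: coordinatewise identification of maps, inclusions between $\Map$ sets of the product and products of $\Map$ sets over the factors, multiplicativity of separated-set counts, and additivity of the ultrafilter limit (correctly identified as the one place where $\limsup$ would fail). The only differences are cosmetic --- you use the max metric and $\rho_\infty$ where the paper uses the sum metric and $\rho_2$, and maximal separated sets in place of explicit spanning sets for the upper bound.
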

\begin{proof} Fix compatible metrics $\rho^X$ and $\rho^Y$ on $X$ and $Y$ respectively.
Define a compatible metric $\rho_{X\times Y}$ on $X\times Y$ by
\[
\rho^{X\times Y}((x_1, y_1), (x_2, y_2))=\rho^X(x_1, x_2)+\rho^Y(y_1, y_2)
\]
for $(x_1, y_1), (x_2, y_2)\in X\times Y$.

Let $d\in \Nb$. Identify $(X\times Y)^{\{1, \dots, d\}}$ with $X^{\{1, \dots, d\}}\times Y^{\{1, \dots, d\}}$ naturally.
Note that for all $\varphi, \varphi'\in X^{\{1, \dots, d\}}$ and $\psi, \psi'\in Y^{\{1, \dots, d\}}$ one has
\[
\max\big(\rho^X_2(\varphi, \varphi'), \rho^Y_2(\psi, \psi' )\big)
\le \rho^{X\times Y}_2((\varphi, \psi), (\varphi', \psi'))
\le \rho^X_2(\varphi, \varphi')+\rho^Y_2(\psi, \psi').
\]
Let $F$ be a nonempty finite subset of $G$,  $\delta>0$, and $\eps>0$, and let
$\sigma$ be  a map from $G$ to $\Sym(d)$.
Then $\Map(\rho^X, F, \delta, \sigma)\times \Map(\rho^Y, F, \delta, \sigma)\subseteq \Map(\rho^{X\times Y}, F, 2\delta, \sigma)$.
Furthermore, for every $(\rho^X_2, \eps)$-separated subset $\sW_X$ of $\Map(\rho^X, F, \delta, \sigma)$
and every $(\rho^Y_2, \eps)$-separated subset $\sW_Y$ of $\Map(\rho^X, F, \delta, \sigma)$,
the set $\sW_X\times \sW_Y$ is $(\rho^{X\times Y}_2, \eps)$-separated.
It follows that $h^\eps_{\Sigma, 2}(\rho^{X\times Y}, F, 2\delta)\ge h^\eps_{\Sigma, 2}(\rho^X, F, \delta)+h^\eps_{\Sigma, 2}(\rho^Y, F, \delta)$,
and hence $h_\Sigma(X\times Y, G)\ge h_\Sigma(X, G)+h_\Sigma(Y, G)$.

Note that for any $(\rho^X_2, \eps)$-spanning subset $\sW_X$ of $\Map(\rho^X, F, \delta, \sigma)$
and any $(\rho^Y_2, \eps)$-spanning subset $\sW_Y$ of $\Map(\rho^X, F, \delta, \sigma)$,
the set $\sW_X\times \sW_Y$ is $(\rho^{X\times Y}_2, 2\eps)$-spanning for (though not necessarily contained in)
$\Map(\rho^{X\times Y}, F, \delta, \sigma)$.
It follows that $N_{4\eps}(\rho^{X\times Y}, F, \delta, \sigma)\le N_\eps(\rho^X, F, \delta, \sigma)\times N_\eps(\rho^Y, F, \delta, \sigma)$,
and hence $h^{4\eps}_{\Sigma, 2}(\rho^{X\times Y}, F, \delta)\le h^\eps_{\Sigma, 2}(\rho^X, F, \delta)+h^\eps_{\Sigma, 2}(\rho^Y, F, \delta)$.
Consequently, $h_\Sigma(X\times Y, G)\le h_\Sigma(X, G)+h_\Sigma(Y, G)$.
\end{proof}

The Loeb space and the Loeb measure were introduced by Loeb in \cite{Loe75}. An exposition can be found in \cite{AleGleGor99}.
The Loeb space is the ultraproduct space $\prod_\fF\{1, \dots, d_i\}$. A subset $Y$ of $\prod_\fF\{1, \dots, d_i\}$ is called {\it internal}
if it is of the form $\prod_\fF Y_i$ for a sequence $\{Y_i\}_{i\in \Nb}$ with $Y_i\subseteq \{1, \dots, d_i\}$ for all $i\in \Nb$.
The collection $\fI$ of inner subsets is an algebra. The Loeb measure is the unique probability measure $\mu$ on the $\sigma$-algebra $\fB$
generated by $\fI$ such that $\mu(Y)=\lim_{i\to \fF} |\cY_i|/d_i$ for every internal set $Y=\prod_\fF \cY_i$.
For every $Z\in \fB$ there exists a $Y\in \fI$ such that $\mu(Y\Delta Z)=0$.

For each $d\in \Nb$, denote by $\rho_{\rm Hamm}$ the normalized Hamming distance on $\Sym(d)$ defined by
\[
\rho_{\rm Hamm}(\tau, \tau')=\frac{1}{d}|\{a\in \{1, \dots, d\}: \tau(a)\neq \tau'(a)\}|.
\]

The ultraproduct group $\prod_\fF\Sym(d_i)$ has a natural action on $\prod_\fF\{1, \dots, d_i\}$ preserving $\mu$. One has
a bi-invariant pseudometric $\rho_L$ on $\prod_\fF\Sym(d_i)$ defined by
$\rho_L(\tau, \tau')=\mu(\{y\in \prod_\fF\{1, \dots, d_i\}: \tau y\neq \tau'y\})$.
For any $\tau=(\tau_i)_i, \tau'=(\tau'_i)_i\in \prod_\fF\Sym(d_i)$ with $\tau_i, \tau'_i\in \Sym(d_i)$ for all $i\in \Nb$,  one has
$\rho_L(\tau, \tau')=\lim_{i\to \fF}\rho_{\rm Hamm}(\tau_i, \tau'_i)$.
Denote by $\fG$ the quotient group of $\prod_\fF\Sym(d_i)$ by $\rho_L$.
Then we may think of $\fG$ as acting on $\prod_\fF \{1, \dots, d_i\}$ by $\mu$-preserving transformations.

The sofic approximation sequence $\Sigma$ gives rise to a natural group embedding of $G$ into $\fG$.
Thus we may think of $G$ as a subgroup of $\fG$. Denote by $G'$ the subgroup of $\fG$ consisting of elements commuting with $G$.

As before, the equality below is understood with respect to the identification of
$((x_1,\dots,x_k),(y_1,\dots,y_k))\in X^k \times Y^k$ and $((x_1,y_1),\dots,(x_k,y_k))\in (X\times Y)^k$.

\begin{theorem}\label{T-ergodic to product}
Suppose that the action of $G'$ on $(\prod_\fF\{1, \dots, d_i\}, \fB, \mu)$ is ergodic.
Let $G$ act continuously on compact metrizable spaces $X$ and $Y$. Let $k\in \Nb$. Then
\[
\IE^\Sigma_k (X\times Y,G)=\IE^\Sigma_k (X,G) \times \IE^\Sigma_k (Y,G).
\]
\end{theorem}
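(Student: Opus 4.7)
The forward inclusion $\IE^\Sigma_k(X\times Y,G)\subseteq \IE^\Sigma_k(X,G)\times \IE^\Sigma_k(Y,G)$ is Proposition~\ref{P-product for IE}. For the reverse, let $((x_j,y_j))_{j=1}^k$ be a point whose coordinate projections lie in $\IE^\Sigma_k(X,G)$ and $\IE^\Sigma_k(Y,G)$, and fix a product neighbourhood $\prod_j(U^X_j\times U^Y_j)$ of it. Write $q^X,q^Y>0$ for the upper independence densities (over $\Sigma$ with respect to $\fF$) of $(U^X_j)_j$ and $(U^Y_j)_j$. The plan is to show that $\oU:=(U^X_j\times U^Y_j)_j$ has upper independence density at least $q^Xq^Y/3$, uniformly in the choice of $(F,\delta)$, by matching independence sets for the two factors through an element of $G'$ produced from the ergodicity hypothesis.

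Fix compatible metrics $\rho^X,\rho^Y$ and set $\rho^{X\times Y}:=\rho^X+\rho^Y$. Given a finite $F\subseteq G$ and a target $\delta>0$, choose $\delta'\in(0,\delta)$ sufficiently small depending on $\delta, |F|$ and the $\rho^X$- and $\rho^Y$-diameters. For $\fF$-a.e.\ $i$, pick $(\rho^X,F,\delta',\sigma_i)$- and $(\rho^Y,F,\delta',\sigma_i)$-independence sets $\cJ^X_i$ and $\cJ^Y_i$ for $(U^X_j)_j$ and $(U^Y_j)_j$ of sizes at least $q^Xd_i$ and $q^Yd_i$, and form the internal sets $\cJ^X:=\prod_\fF\cJ^X_i$ and $\cJ^Y:=\prod_\fF\cJ^Y_i$ in the Loeb space $(Z,\fB,\mu)$; they have Loeb measures at least $q^X$ and $q^Y$. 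By ergodicity the $G'$-invariants in $L^2(Z,\mu)$ are the constants $\Cb\cdot\unit$, so the Alaoglu--Birkhoff mean ergodic theorem places $\mu(\cJ^Y)\unit$---the projection of $\unit_{\cJ^Y}$ onto the invariants---in the closed convex hull of the $G'$-orbit of $\unit_{\cJ^Y}$. Choose a finite convex combination $\sum_j\lambda_jg_j\unit_{\cJ^Y}$ within $q^Xq^Y/10$ of $\mu(\cJ^Y)\unit$ in $L^1(Z,\mu)$; integrating against $\unit_{\cJ^X}$ gives
\[
\sum_j\lambda_j\,\mu(\cJ^X\cap g_j\cJ^Y)\;\geq\;\mu(\cJ^X)\mu(\cJ^Y)-q^Xq^Y/10\;\geq\;9q^Xq^Y/10,
\]
so some $g=g_j\in G'$ satisfies $\mu(\cJ^X\cap g\cJ^Y)\geq q^Xq^Y/2$.

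Lift $g$ to a representative $(\tau_i)_i\in\prod_i\Sym(d_i)$. As $g$ commutes with each $s\in F$ in $\fG$, for $\fF$-a.e.\ $i$ one has $\rho_{\rm Hamm}(\tau_i\sigma_{i,s},\sigma_{i,s}\tau_i)<\delta'$ for every $s\in F$, and the set $\cK_i:=\cJ^X_i\cap\tau_i\cJ^Y_i$ has cardinality at least $(q^Xq^Y/3)d_i$. Given $\omega\colon\cK_i\to\{1,\dots,k\}$, extend $\omega$ arbitrarily to $\omega^X$ on $\cJ^X_i$ and define $\omega^Y$ on $\cJ^Y_i$ by $\omega^Y(b)=\omega(\tau_ib)$ when $\tau_ib\in\cK_i$ (arbitrarily elsewhere). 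The independence properties of $\cJ^X_i$ and $\cJ^Y_i$ produce $\varphi\in\Map(\rho^X,F,\delta',\sigma_i)$ realizing $\omega^X$ and $\psi'\in\Map(\rho^Y,F,\delta',\sigma_i)$ realizing $\omega^Y$. Set $\psi:=\psi'\circ\tau_i^{-1}$ and $\Phi:=(\varphi,\psi)$; for each $a\in\cK_i$, writing $a=\tau_ib$ with $b\in\cJ^Y_i$, one gets $\Phi(a)\in U^X_{\omega(a)}\times U^Y_{\omega(a)}$. A triangle inequality using $\rho_{\rm Hamm}(\tau_i^{-1}\sigma_{i,s},\sigma_{i,s}\tau_i^{-1})<\delta'$, permutation invariance of $\rho^Y_2$, the fact that $\psi'\in\Map$, and $\rho^{X\times Y}_2\leq \rho^X_2+\rho^Y_2$ shows that $\Phi\in\Map(\rho^{X\times Y},F,\delta,\sigma_i)$ once $\delta'$ is small enough. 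Hence $\cK_i$ is a $(\rho^{X\times Y},F,\delta,\sigma_i)$-independence set for $\oU$ of density at least $q^Xq^Y/3$.

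The main obstacle is obtaining the lower bound on $\mu(\cJ^X\cap g\cJ^Y)$ \emph{uniformly} in the data $(F,\delta)$: bare ergodicity of $G'$ supplies only some $g$ with positive intersection, with no control on its size in terms of $\mu(\cJ^X)$ and $\mu(\cJ^Y)$ alone. The uniform bound $q^Xq^Y/2$ is exactly what the Alaoglu--Birkhoff convex-combination argument delivers, and this is the genuinely ergodic input. The remaining steps---transferring approximate commutation of $\tau_i$ with $\sigma_{i,s}$ through the composition with $\tau_i^{-1}$ and combining the maps $\varphi$ and $\psi$ into a single map into $X\times Y$---are routine given the $G'$-commutation and the boundedness of the metrics.
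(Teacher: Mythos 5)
Your proof is correct, and the overall strategy---pass to the Loeb space, use ergodicity of $G'$ to overlap an independence set for the $X$-factor with a $G'$-translate of one for the $Y$-factor, then transfer back through the approximate commutation of a representative $(\tau_i)_i$ with the $\sigma_{i,s}$---is the same as the paper's. Where you genuinely diverge is in how uniformity over the data $(F,\delta)$ is extracted from ergodicity. The paper first upgrades positive upper independence density to a single \emph{internal independence set} in the sense of Definition~\ref{D-independence set on Loeb space}, i.e.\ one internal set that is a $(\rho,F,\delta,\sigma_i)$-independence set for $\fF$-almost every $i$ for every pair $(F,\delta)$ simultaneously; this is achieved by a diagonalization over an exhausting sequence $(F_n,1/n)$ in Proposition~\ref{P-basics of internal independence sets}(1). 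With that uniformity in hand, the purely qualitative Lemma~\ref{L-ergodic to positive intersection} (some $\tau\in G'$ with $\mu(Y\cap\tau Z)>0$) suffices, since the single number $\mu(Y\cap\tau Z)>0$ then serves as the density constant for all $(F,\delta)$ at once. You instead treat each $(F,\delta)$ separately, so your sets $\cJ^X,\cJ^Y$ and your element $g$ depend on $(F,\delta)$, and you must make the intersection bound quantitative in terms of $q^Xq^Y$ alone; the Alaoglu--Birkhoff argument delivers exactly that, and your verification that $\cK_i$ is an independence set for the product tuple mirrors parts (2) and (3) of Proposition~\ref{P-basics of internal independence sets}. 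Both routes are sound: yours trades the diagonalization for a stronger, quantitative use of ergodicity, while the paper's isolates reusable structural statements about internal independence sets. Two minor points: ``the upper independence densities $q^X,q^Y$'' should read ``constants witnessing positive upper independence density,'' since no canonical value is defined; and your step ``the $G'$-invariants in $L^2$ are the constants'' uses ergodicity in the form that almost-invariant sets are trivial, which is the same form the paper invokes in Lemma~\ref{L-ergodic to positive intersection} and the form its ergodicity theorems actually establish, so this is consistent with the hypothesis as intended.
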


We prove the theorem by way of the following results.

\begin{definition}\label{D-independence set on Loeb space}
Let $G$ act continuously on a compact metrizable space $X$. Let $\rho$ be a dynamically generating continuous pseudometric on $X$.
Let $\oA = (A_1 , \dots ,A_k )$ be a tuple of subsets of $X$.
We say that an internal set $Y=\prod_\fF Y_i$ with $Y_i\subseteq \{1, \dots, d_i\}$ for all $i\in \Nb$ is an {\it independence set} for
$\oA$ if for every nonempty finite subset $F$ of $G$ and every $\delta>0$ the set of all $i\in \Nb$ for which $Y_i$
is a $(\rho, F, \delta, \sigma_i)$-independence set for $\oA$ belongs to $\fF$.
\end{definition}

From Lemma~\ref{L-change pseudometric} it is easy to see that Definition~\ref{D-independence set on Loeb space}
does not depend on the choice of $\rho$.

Consistent with our interpretation of the equality in Theorem~\ref{T-ergodic to product},
in Proposition~\ref{P-basics of internal independence sets} and
Lemma~\ref{L-tuple of product sets}  we understand $\oA \times \oB$ to mean $(A_1 \times B_1 ,\dots , A_k \times B_k)$
where $\oA = (A_1 ,\dots ,A_k)$ and $\oB = (B_1 ,\dots ,B_k)$.

\begin{proposition} \label{P-basics of internal independence sets}
Let $G$ act continuously on compact metrizable spaces $X$ and $Y$. Let $\oA$ and $\oB$ be $k$-tuples of subsets of
$X$ and $Y$ respectively for some $k\in \Nb$. Then the following hold:
\begin{enumerate}
\item $\oA$ has positive upper independence density over $\Sigma$ if and only if $\oA$ has an internal independence
set $Z$ with $\mu(Z)>0$.

\item The set of internal independence sets for $\oA$ is $G'$-invariant.

\item An internal set is an independence set for $\oA \times \oB$ if and only if it is an independence set for both $\oA$ and $\oB$.
\end{enumerate}
\end{proposition}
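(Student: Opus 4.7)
The plan is to establish the three parts in order, with (2) being the main obstacle.

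For (1), fix a cofinal countable family of parameters: increasing finite $F_1 \subseteq F_2 \subseteq \cdots$ exhausting $G$ and positive $\delta_n \downarrow 0$; observe that any $(\rho, F_n, \delta_n, \sigma_i)$-independence set is also a $(\rho, F_m, \delta_m, \sigma_i)$-independence set whenever $m \leq n$. If $\oA$ has positive upper independence density $q > 0$ (with respect to $\fF$, as defined in this section), each set $E_n$ of indices $i$ admitting a $(\rho, F_n, \delta_n, \sigma_i)$-independence set of cardinality $\geq q d_i$ lies in $\fF$. Set $n(i) = \sup\{n : i \in E_n\}$ and, for each such $i$, choose $Y_i$ of cardinality $\geq q d_i$ witnessing $n(i)$; for the remaining $i$ pick $Y_i$ arbitrarily. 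Then $Y = \prod_\fF Y_i$ has $\mu(Y) \geq q$, and for any $(F, \delta)$, choosing $n$ with $F \subseteq F_n$ and $\delta_n \leq \delta$ gives $\{i : n(i) \geq n\} \supseteq E_n \in \fF$, so $Y$ is an internal independence set. The converse is immediate: if $\mu(Y) = q > 0$, then for any $(F, \delta)$ the intersection of $\{i : |Y_i| > (q/2) d_i\}$ with the $\fF$-set on which $Y_i$ is $(\rho, F, \delta, \sigma_i)$-independent lies in $\fF$ and exhibits positive upper independence density at least $q/2$.

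For (2), let $Y = \prod_\fF Y_i$ be internal and independent for $\oA$, and let $g \in G'$ be represented by $(g_i)_i$ with $g_i \in \Sym(d_i)$; set $gY := \prod_\fF g_i(Y_i)$, well-defined modulo null sets since two representatives of $g$ differ on sets of vanishing Hamming density. Fix a $\rho$-diameter bound $M$ of $X$ and, given $(F, \delta)$, take $\eta > 0$ with $\sqrt{(\delta/2)^2 + \eta M^2} < \delta$. Since $g$ commutes with each $s \in G$ in $\fG$, bi-invariance of $\rho_L$ yields $\rho_L(g^{-1}\sigma_s, \sigma_s g^{-1}) = 0$ as well, so for $\fF$-most $i$ the set $B_{i,s} := \{a : g_i^{-1}\sigma_{i,s}(a) = \sigma_{i,s} g_i^{-1}(a)\}$ has density at least $1 - \eta$ simultaneously for every $s \in F$, while $Y_i$ is a $(\rho, F, \delta/2, \sigma_i)$-independence set for $\oA$. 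For any such $i$ and any $\omega : g_i(Y_i) \to \{1, \dots, k\}$, choose $\varphi \in \Map(\rho, F, \delta/2, \sigma_i)$ with $\varphi(a) \in A_{\omega(g_i(a))}$ for $a \in Y_i$ and set $\psi := \varphi \circ g_i^{-1}$, so $\psi(b) \in A_{\omega(b)}$ for $b \in g_i(Y_i)$. Splitting the sum defining $\rho_2(\psi\sigma_{i,s}, \alpha_s\psi)^2$ at $B_{i,s}$ and making the bijective substitution $b = g_i^{-1}(a)$ on $B_{i,s}$, the contribution from $B_{i,s}$ is bounded by $\rho_2(\varphi\sigma_{i,s}, \alpha_s\varphi)^2 \leq (\delta/2)^2$ and that from its complement by $\eta M^2$; hence $\psi \in \Map(\rho, F, \delta, \sigma_i)$, and $g_i(Y_i)$ is a $(\rho, F, \delta, \sigma_i)$-independence set for $\oA$. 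The difficulty here is precisely the need to reconcile the Hamming-type commutation estimate governing $G'$ with the $\ell^2$-type pseudometric $\rho_2$ defining $\Map$.

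For (3), the forward direction uses $\rho^X_2(\varphi\sigma_{i,s}, \alpha_s\varphi) \leq \rho^{X\times Y}_2((\varphi, \psi)\sigma_{i,s}, \alpha_s(\varphi, \psi))$ from the proof of Proposition~\ref{P-entropy for product}: a witness $\Phi = (\varphi, \psi)$ that $Z_i$ is a $(\rho^{X\times Y}, F, \delta, \sigma_i)$-independence set for $\oA \times \oB$ projects to witnesses for the independence of $Z_i$ with respect to $\oA$ and $\oB$ separately. For the reverse direction, on the $\fF$-set where $Z_i$ is simultaneously a $(\rho^X, F, \delta/2, \sigma_i)$- and $(\rho^Y, F, \delta/2, \sigma_i)$-independence set, pick witnesses $\varphi$ and $\psi$ separately for each $\omega : Z_i \to \{1, \dots, k\}$; combining them into $\Phi = (\varphi, \psi)$ and applying the product inequality $\rho^{X\times Y}_2 \leq \rho^X_2 + \rho^Y_2$ yields $\Phi \in \Map(\rho^{X\times Y}, F, \delta, \sigma_i)$ with $\Phi(a) \in A_{\omega(a)} \times B_{\omega(a)}$ for $a \in Z_i$, as desired.
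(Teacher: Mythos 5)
Your proof is correct and follows essentially the same route as the paper's: the nested choice of witnesses along an exhausting sequence $(F_n,\delta_n)$ for (1), the triangle-inequality comparison of $\varphi\circ g_i^{-1}$ with $\varphi$ using the Hamming-smallness of the commutation defect between $g_i^{-1}$ and $\sigma_{i,s}$ for (2), and the projection/combination of witnesses via $\max(\rho^X_2,\rho^Y_2)\le\rho^{X\times Y}_2\le\rho^X_2+\rho^Y_2$ for (3). The only point to tidy is the case $n(i)=\infty$ in (1), where a single witness "for $n(i)$" need not exist; the paper's device of deleting $\{1,\dots,n-1\}$ from the $n$th index set (equivalently, taking a witness at level $\min(n(i),i)$) repairs this in one line.
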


\begin{proof}
Fix a compatible metric $\rho$ on $X$ which gives $X$ diameter at most $1$.

(1). The ``if'' part is obvious. Suppose that $\oA$ has positive upper independence density over $\Sigma$.
Let $q>0$ be as in Definition~\ref{D-positive independence density}. Let $\{F_n\}_{n\in \Nb}$ be an increasing
sequence of finite subsets of $G$ with $\bigcup_{n\in \Nb} F_n=G$.
For each $n\in \Nb$, denote by $W'_n$ the set of all $i\in \Nb$ for which there is a $(\rho_X, F_n, 1/n, \sigma_i)$-independence
set $Z_i$ for $\oA$ with $\zeta(Z_i)\ge q$. Also set $W_n=W'_n\setminus \{1, \dots, n-1\}$.
Then $W'_n\in \fF$ by our assumption and hence $W_n\in \fF$ for each $n\in \Nb$.
Note that the sequence $\{W_n\}_{n\in \Nb}$ is decreasing, and $\bigcap_{n\in \Nb}W_n=\emptyset$.

We define an internal set $Z=\prod_\fF \cZ_i$ as follows. If $i\in \Nb \setminus W_1$, we take any $\cZ_i \subseteq \{1, \dots, d_i\}$.
 If $i\in W_n\setminus W_{n+1}$ for some $n\in \Nb$, we take $\cZ_i$ to be a $(\rho, F_n, 1/n, \sigma_i)$-independence
set  for $\oA$ with $|\cZ_i|/d_i\ge q$. Then $\cZ_i$ is a $(\rho, F_n, 1/n, \sigma_i)$-independence set  for $\oA$ for all $n\in \Nb$
and $i\in W_n$. Thus $Z$ is an internal independence set for $\oA$.
As $|\cZ_i|/d_i\ge q$ for all $i\in W_1$, we have $\mu(Z)=\lim_{i\to \fF}\zeta(Z_i)\ge q$. This proves the ``only if'' part.

(2). Let $Z=\prod_\fF \cZ_i$ be an internal independence set for $\oA$, and let $\tau=(\tau_i)_i\in G'$. Then $\tau Z=\prod_\fF \tau_i\cZ_i$.
Let $F$ be a nonempty finite subset of $G$ and $1>\delta>0$. Then the set $W$ of all $i\in \Nb$ for which $\cZ_i$ is a
$(\rho, F, \delta, \sigma_i)$-independence set for $\oA$ is in $\fF$.
Since $\tau\in G'$, the set $V$ of all $i\in \Nb$ for which
$\max_{s\in F}\rho_{\rm Hamm}(\tau^{-1}_i \sigma_{i,s}, \sigma_{i,s}\tau^{-1}_i) \le \delta^2$ is also in $\fF$.
Then $V\cap W$ is in $\fF$. For every $i\in V$, $\varphi\in \Map(\rho, F, \delta, \sigma_i)$, and $s\in F$ one has
\begin{align*}
\lefteqn{\rho_2(\alpha_s\circ \varphi\circ \tau^{-1}_i, \varphi \circ \tau^{-1}_i \circ \sigma_{i,s})}\hspace*{20mm} \\
\hspace*{20mm} &\le \rho_2(\alpha_s\circ \varphi\circ \tau^{-1}_i, \varphi \circ \sigma_{i,s}\circ \tau^{-1}_i)
+\rho_2(\varphi\circ \sigma_{i,s} \circ \tau^{-1}_i, \varphi \circ \tau^{-1}_i \circ \sigma_{i,s}) \\
&\le  \rho_2(\alpha_s\circ \varphi, \varphi  \circ \sigma_{i,s})
+(\rho_{\rm Hamm}(\tau^{-1}_i \circ \sigma_{i,s}, \sigma_{i,s} \circ \tau^{-1}_i))^{1/2}\\
&\le \delta+\delta= 2\delta,
\end{align*}
where $\alpha_s$ is the transformation $x\mapsto sx$ of $X$, and hence $\varphi \circ \tau^{-1}_i\in \Map(\rho, F, 2\delta, \sigma_i)$. It follows that for every $i\in V\cap W$ the set $\tau_i \cZ_i$
is a $(\rho, F, 2\delta, \sigma_i)$-independence set for $\oA$. Therefore $\tau Z$ is an internal independence set for $\oA$.

(3). This can be proved using arguments similar to the proof of Proposition~\ref{P-entropy for product}.
\end{proof}

\begin{lemma}\label{L-ergodic to positive intersection}
Suppose that $\Gamma$ is a subgroup of $\fG$ and the action of $\Gamma$ on $(\prod_\fF\{1, \dots, d_i\}, \fB, \mu)$ is ergodic.
Let $Y, Z\in \fB$ be such that $\mu(Y), \mu(Z)>0$. Then $\mu(Y\cap \tau Z)>0$ for some $\tau\in \Gamma$.
\end{lemma}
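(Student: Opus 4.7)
My plan is to argue by contradiction: assume $\mu(Y\cap\tau Z)=0$ for every $\tau\in\Gamma$ and construct a $\Gamma$-invariant set of positive measure that is not of full measure, contradicting ergodicity.

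The standard trick for an uncountable group action on a measure space is to find a countable subfamily that already saturates $Z$ maximally. Concretely, I would set
\[
\eta=\sup\bigg\{\mu\Big(\bigcup_{\tau\in C}\tau Z\Big):C\subseteq\Gamma\text{ countable}\bigg\}.
\]
Pick countable sets $C_n\subseteq\Gamma$ with $\mu(\bigcup_{\tau\in C_n}\tau Z)\to\eta$, set $C=\bigcup_n C_n$ (still countable), and let $W=\bigcup_{\tau\in C}\tau Z$. Since $C$ is countable, $W\in\fB$ and $\mu(W)=\eta$. Because $\mu(Z)>0$ and $Z\subseteq W$ (take any one $\tau=e$, or just observe $Z$ itself is a candidate), we have $\eta\geq\mu(Z)>0$.

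Next I would use the maximality of $\eta$ to show $W$ is essentially $\Gamma$-invariant: for any $\sigma\in\Gamma$, the set $\sigma W=\bigcup_{\tau\in\sigma C}\tau Z$ is again indexed by a countable family, and so is $W\cup\sigma W=\bigcup_{\tau\in C\cup\sigma C}\tau Z$. Hence $\mu(W\cup\sigma W)\leq\eta=\mu(W)$, giving $\mu(\sigma W\setminus W)=0$. Applying the same to $\sigma^{-1}$ yields $\mu(W\triangle\sigma W)=0$, so $W$ is $\Gamma$-invariant modulo null sets. Ergodicity of the $\Gamma$-action then forces $\mu(W)\in\{0,1\}$, and since $\mu(W)>0$ we must have $\mu(W)=1$.

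Consequently $\mu(Y\cap W)=\mu(Y)>0$, and since $W=\bigcup_{\tau\in C}\tau Z$ with $C$ countable, countable subadditivity yields some $\tau\in C\subseteq\Gamma$ with $\mu(Y\cap\tau Z)>0$, contradicting the assumption. The main subtlety is the uncountability of $\Gamma$ (the Loeb $\sigma$-algebra is not closed under arbitrary unions), which is precisely what the ``countable saturation'' step circumvents; the rest is just bookkeeping with ergodicity.
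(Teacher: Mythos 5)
Your proposal is correct and follows essentially the same route as the paper: both take the supremum of $\mu\big(\bigcup_{\tau\in C}\tau Z\big)$ over countable $C\subseteq\Gamma$, realize it by a single countable union $W$, use maximality to show $W$ is essentially $\Gamma$-invariant, invoke ergodicity to get $\mu(W)=1$, and finish by countable subadditivity applied to $Y\cap W$. The only difference is cosmetic: the paper concludes directly rather than by contradiction.
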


\begin{proof}
Set $r=\sup_F \mu(\bigcup_{\tau \in F} \tau Z)$ with $F$ ranging over the nonempty countable subsets of $\Gamma$.
Then we can find nonempty finite subsets
$F_1, F_2, \dots$ of $\Gamma$ such that $r=\lim_{n\to \infty}\mu(\bigcup_{\tau \in F_n} \tau Z)$. Set $W=\bigcup_{n\in \Nb} F_n$
and $Z'=\bigcup_{\tau \in W} \tau Z$. Then $W$ is a countable subset of $\Gamma$ and $r=\mu(Z')$. For every $\tau'\in \Gamma$
we have $\mu(Z\cup \tau'Z)=\mu(\bigcup_{\tau \in W\cup \tau' W} \tau Z)\le r$ and hence $\mu(\tau' Z\setminus Z)=0$.
Since the action of $\Gamma$ on $(\prod_\fF\{1, \dots, d_i\}, \fB, \mu)$ is ergodic, we conclude that $r=1$.
Thus $\mu(Y)=\mu(Y\cap Z')\le \sum_{\tau \in W}\mu(Y \cap \tau Z)$, and hence $\mu(Y \cap \tau Z)>0$ for some $\tau \in W$.
\end{proof}

\begin{lemma} \label{L-tuple of product sets}
Suppose that the action of $G'$ on $(\prod_\fF\{1, \dots, d_i\}, \fB, \mu)$ is ergodic. Let $G$ act continuously on compact metrizable
spaces $X$ and $Y$. Let $k\in\Nb$ and let $\oA$ and $\oB$ be $k$-tuples of subsets of $X$ and $Y$, respectively.
Suppose that both $\oA$ and $\oB$ have positive upper independence density over $\Sigma$.
Then $\oA\times \oB$ also has positive upper independence density over $\Sigma$.
\end{lemma}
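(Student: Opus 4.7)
The proof proposal essentially assembles the three parts of Proposition~\ref{P-basics of internal independence sets} together with Lemma~\ref{L-ergodic to positive intersection}, so it should be short. My plan is as follows.

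First, I would translate the positive upper independence density hypotheses into the Loeb-space language. By Proposition~\ref{P-basics of internal independence sets}(1), the tuple $\oA$ admits an internal independence set $Z_{\oA} = \prod_\fF (Z_{\oA})_i$ with $\mu(Z_{\oA}) > 0$, and similarly $\oB$ admits an internal independence set $Z_{\oB}$ with $\mu(Z_{\oB}) > 0$. The goal becomes producing an internal independence set for $\oA \times \oB$ of positive $\mu$-measure, and then invoking Proposition~\ref{P-basics of internal independence sets}(1) again.

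Second, I would use the ergodicity hypothesis to align these two internal sets. Applying Lemma~\ref{L-ergodic to positive intersection} with $\Gamma = G'$, $Y = Z_{\oA}$, $Z = Z_{\oB}$, I obtain a $\tau \in G'$ with $\mu(Z_{\oA} \cap \tau Z_{\oB}) > 0$. By Proposition~\ref{P-basics of internal independence sets}(2), $\tau Z_{\oB}$ is still an internal independence set for $\oB$. The intersection $W := Z_{\oA} \cap \tau Z_{\oB}$ is internal because the internal sets form an algebra; concretely, $W = \prod_\fF \big((Z_{\oA})_i \cap \tau_i(Z_{\oB})_i\big)$ for any representing sequence $(\tau_i)_i$ of $\tau$.

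Third, I would observe that $W$ inherits the independence set property from both of its supersets. Indeed, any subset of a $(\rho,F,\delta,\sigma_i)$-independence set for a given tuple is again such an independence set, since one can simply restrict the coloring $\omega$ and use the same witnessing element of $\Map(\rho,F,\delta,\sigma_i)$; passing this observation through the ultrafilter, any internal subset of an internal independence set is an internal independence set. Thus $W \subseteq Z_{\oA}$ is an internal independence set for $\oA$, and $W \subseteq \tau Z_{\oB}$ is an internal independence set for $\oB$. By Proposition~\ref{P-basics of internal independence sets}(3), $W$ is then an internal independence set for $\oA \times \oB$, and since $\mu(W) > 0$, Proposition~\ref{P-basics of internal independence sets}(1) gives that $\oA \times \oB$ has positive upper independence density over $\Sigma$.

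There is no real obstacle; all the work has been packaged into the preceding lemmas, and the role of the ergodicity hypothesis is precisely to let us translate one of the two sets so as to have positive-measure overlap. The only small point to verify cleanly is the inheritance of the independence property by internal subsets, which is immediate from the definitions.
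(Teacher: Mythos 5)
Your proposal is correct and is exactly the argument the paper intends: the paper's proof is the single sentence ``This follows from Proposition~\ref{P-basics of internal independence sets} and Lemma~\ref{L-ergodic to positive intersection},'' and your write-up fills in precisely those steps, including the (correct and immediate) observation that an internal subset of an internal independence set is again one.
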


\begin{proof}
This follows from Proposition~\ref{P-basics of internal independence sets} and Lemma~\ref{L-ergodic to positive intersection}.
\end{proof}

Theorem~\ref{T-ergodic to product} now follows from Proposition~\ref{P-product for IE} and Lemma~\ref{L-tuple of product sets}.

The remainder of this section is devoted to the problem of when the ergodicity hypothesis in Theorem~\ref{T-ergodic to product} is satisfied.
We prove that this happens when $G$ is residually finite and $\Sigma$ arises from
finite quotients of $G$, and also when $G$ is amenable and $\Sigma$ is arbitrary.
A combination of results of Elek and Szabo  \cite[Thm.\ 2]{EleSza11} and Paunescu \cite{Pau11} shows on the other hand that
if $G$ is nonamenable then there is always a sofic approximation sequence $\Sigma$ for which
the commutant $G'$ does not act ergodically.

Let $G$ be an infinite residually finite group, and let $\{G_i\}_{i\in \Nb}$ be a sequence of finite-index normal subgroups
of $G$ such that $\bigcap_{n\in \Nb}\bigcup_{i\ge n}G_i=\{e\}$. Then we have the sofic approximation sequence
$\Sigma = \{ \sigma_i : G\to\Sym (|G/G_i| ) \}$ by identifying $\{1, \dots, |G/G_i|\}$ with $G/G_i$ and setting
$\sigma_i(s)(tG_i)=stG_i$ for $s, t\in G$.

\begin{theorem} \label{T-rf ergodic}
Under the above hypotheses, the action of $G'$ on $(\prod_\fF\{1, \dots, |G/G_i|\}, \fB, \mu)$ is ergodic.
\end{theorem}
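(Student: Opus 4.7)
The plan is to identify a large supply of elements of $G'$ by noting that, because each $G_i$ is normal in $G$, right multiplication on $G/G_i$ is a well-defined permutation that commutes with the left-multiplication sofic model $\sigma_i$ exactly; a simple averaging argument then supplies, for any pair of internal sets of positive measure, an element of $G'$ moving one into the other, which forces ergodicity.

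For each $i$ and each $r\in G$, normality of $G_i$ makes $\tau_{i,r}(tG_i):=trG_i$ a well-defined permutation of $G/G_i$: if $t'=tg$ with $g\in G_i$, then $t'rG_i=tr(r^{-1}gr)G_i=trG_i$. A direct computation gives $\sigma_i(s)\tau_{i,r}(tG_i)=strG_i=\tau_{i,r}\sigma_i(s)(tG_i)$, so $\tau_{i,r}$ commutes with $\sigma_i(G)$ as permutations. Consequently, for any sequence $(r_i)_{i\in\Nb}$ in $G$ the class in $\fG$ of $(\tau_{i,r_i})_i\in\prod_{\fF}\Sym(G/G_i)$ commutes exactly with the image of every $s\in G$ and therefore belongs to $G'$.

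Next I would prove the following intersection claim: for every pair of internal sets $Y=\prod_{\fF}Y_i$ and $Z=\prod_{\fF}Z_i$ with $\mu(Y),\mu(Z)>0$ there is a $\tau\in G'$ with $\mu(\tau Y\cap Z)\ge\mu(Y)\mu(Z)$. Since $r\mapsto arG_i$ is a bijection of $G/G_i$ for each $a\in G/G_i$,
\[
\sum_{rG_i\in G/G_i}|\tau_{i,r}Y_i\cap Z_i|=\sum_{a\in Y_i}|\{rG_i:arG_i\in Z_i\}|=|Y_i|\,|Z_i|,
\]
so by pigeonhole some $r_i\in G$ satisfies $|\tau_{i,r_i}Y_i\cap Z_i|\ge|Y_i|\,|Z_i|/|G/G_i|$; the element $\tau\in G'$ built from $(r_i)_i$ as in the previous paragraph then satisfies $\mu(\tau Y\cap Z)\ge\mu(Y)\mu(Z)$ after passing to the ultralimit.

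Finally, to deduce ergodicity, suppose for contradiction that $E\in\fB$ is $G'$-invariant with $0<\mu(E)<1$. By the density property of internal sets recalled in the excerpt, choose internal $Y,Z$ with $\mu(Y\Delta E)=\mu(Z\Delta E^c)=0$, so $\mu(Y)\mu(Z)=\mu(E)\mu(E^c)>0$. The intersection claim supplies $\tau\in G'$ with $\mu(\tau Y\cap Z)\ge\mu(Y)\mu(Z)>0$. Since $\tau$ preserves $\mu$ and $\tau E=E$, we have $\mu(\tau Y\Delta E)=\mu(Y\Delta E)=0$, and together with $\mu(Z\Delta E^c)=0$ this forces $\mu(\tau Y\cap Z)=\mu(E\cap E^c)=0$, a contradiction. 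The feature of the residually-finite/finite-quotient setting that makes this proof clean is precisely that the commuting elements above commute with $\sigma_i(G)$ \emph{exactly} rather than merely approximately, so no error estimates are required; beyond this bookkeeping I do not anticipate any serious obstacle.
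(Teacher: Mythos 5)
Your proof is correct and follows essentially the same route as the paper: both exploit the fact that right translations on $G/G_i$ commute \emph{exactly} with the left-multiplication sofic model and then average over the full finite quotient to produce a good element of $G'$. The only difference is cosmetic — the paper averages $|\sigma_i'(sG_i)\cY_i\,\Delta\,\cY_i|$ to show a single invariant internal set must have measure $0$ or $1$, whereas you double-count $|\tau_{i,r}Y_i\cap Z_i|$ to get the two-set intersection bound $\mu(\tau Y\cap Z)\ge\mu(Y)\mu(Z)$ and then apply it to $E$ and $E^c$.
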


\begin{proof}
Consider the right multiplication action $\sigma'$ of $G/G_i$ on itself given by $\sigma'_i(sG_i)(tG_i)=ts^{-1}G_i$ for $s, t\in G$.
Since this commutes with $\sigma_i$, it suffices to show that the action of $\prod_\fF\sigma'_i(G/G_i)\subseteq G'$
on $(\prod_\fF\{1, \dots, |G/G_i|\}, \fB, \mu)$ is ergodic.

Let $\cY_i\subseteq G/G_i$. Then, using the $\ell^1$-norm with respect to the uniform probability measure on $G/G_i$,
\begin{align*}
\frac{1}{|G/G_i|}\sum_{sG_i\in G/G_i}|\sigma'_i(sG_i)\cY_i\Delta \cY_i|
&= \sum_{sG_i\in G/G_i}\big\|1_{\sigma'_i(sG_i)\cY_i}-1_{\cY_i}\big\|_1\\
&\ge \bigg\|\sum_{sG_i\in G/G_i}1_{\sigma'_i(sG_i)\cY_i}-|G/G_i|\cdot 1_{\cY_i} \bigg\|_1\\
&= \big\||\cY_i|\cdot 1_{G/G_i}-|G/G_i|\cdot 1_{\cY_i}\big\|_1=2\frac{|\cY_i|}{|G/G_i|}\bigg(|G/G_i|-|\cY_i|\bigg).
\end{align*}
Thus there is some $s_iG_i\in G/G_i$ with
$\frac{1}{|G/G_i|}|\sigma'_i(s_iG_i)\cY_i\Delta \cY_i|\ge 2\frac{|\cY_i|}{|G/G_i|}\big(1-\frac{|\cY_i|}{|G/G_i|}\big)$.

Let $Y=\prod_\fF \cY_i$ be an internal subset of $\prod_\fF\{1, \dots, |G/G_n|\}$.  Take $s_iG_i \in G/G_i$ as above for each $i\in \Nb$.
Set $s=(s_iG_i)_i$.
Then
\begin{align*}
\mu(\sigma'(s)Y\Delta Y)&=\lim_{n\to \fF}\frac{|\sigma'(s_iG_i)\cY_i\Delta \cY_i|}{|G/G_i|} \\
&\ge \lim_{n\to \fF}2\frac{|\cY_i|}{|G/G_i|}\bigg(1-\frac{|\cY_i|}{|G/G_i|}\bigg)=2\mu(Y)(1-\mu(Y)).
\end{align*}
If $\mu(\sigma'(s)Y\Delta Y)=0$, then $\mu(Y)=0$ or $1$. This finishes the proof.
\end{proof}

\begin{theorem}\label{T-amenable ergodic}
Let $G$ be a countable amenable group. For every sofic approximation sequence $\Sigma$ for $G$,
the action of $G'$ on $(\prod_\fF\{1, \dots, d_i\}, \fB, \mu)$ is ergodic.
\end{theorem}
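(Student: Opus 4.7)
The plan is to argue by contradiction: assume $Y=\prod_\fF \cY_i$ is an internal $G'$-invariant set with $\alpha:=\mu(Y)\in(0,1)$, and derive a contradiction by producing a $\tau\in G'$ with $\mu(\tau Y\triangle Y)>0$. Amenability of $G$ enters through Lemma~\ref{L-Rokhlin2}, which supplies the elements of $G'$ needed.

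First I would apply Lemma~\ref{L-Rokhlin2} with $\tau=0$, $\eta>0$ small, and a finite set $K\subseteq G$ (to be sent to $G$ in a diagonal argument), obtaining F\o{}lner sets $F_1,\dots,F_\ell$ that are sufficiently $(K,\delta)$-invariant together with centers $C_{k,i}\subseteq\{1,\dots,d_i\}$ (for $i$ in a set in $\fF$) such that the tiles $\sigma_i(F_k)C_{k,i}$ are pairwise disjoint, the maps $(s,c)\mapsto \sigma_s(c)$ from $F_k\times C_{k,i}$ to $\sigma_i(F_k)C_{k,i}$ are bijective, and together the $\sigma_i(F_k)C_{k,i}$ are a $(1-\eta)$-cover of $\{1,\dots,d_i\}$. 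Given any choice of bijections $\beta_{k,i}\in\Sym(C_{k,i})$ I then define $\pi_i\in\Sym(d_i)$ by $\pi_i(\sigma_s(c))=\sigma_s(\beta_{k,i}(c))$ for $c\in C_{k,i}$, $s\in F_k$, and $\pi_i=\id$ off the covered region. The near-multiplicativity of $\sigma_i$ together with the $K$-invariance of the $F_k$ forces $\rho_L(\pi_i\sigma_i(h),\sigma_i(h)\pi_i)$ to be small for every $h\in K$, so a standard diagonalization over an exhaustion $K_n\uparrow G$ and $\eta_n\downarrow 0$ yields an ultralimit $\pi=(\pi_i)_i\in G'$.

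Next I would extract structural information on $\cY_i$ from $G'$-invariance of $Y$ under such tile-permuting $\pi$. Taking each $\beta_{k,i}$ uniformly at random in $\Sym(C_{k,i})$ and using the identity $\mathbb{E}|\beta(B)\triangle B|=2|B|(1-|B|/|C_{k,i}|)$ for $B\subseteq C_{k,i}$, the condition $\mu(\pi Y\triangle Y)=0$ forces
\[
\sum_{k}\frac{|C_{k,i}|}{d_i}\sum_{s\in F_k}a^{(k)}_{i,s}\bigl(1-a^{(k)}_{i,s}\bigr)\longrightarrow 0 \quad\text{along }\fF,
\]
where $a^{(k)}_{i,s}:=|\{c\in C_{k,i}:\sigma_s(c)\in\cY_i\}|/|C_{k,i}|$. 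So for each $(k,s)$ the density $a^{(k)}_{i,s}$ tends along $\fF$ to $0$ or $1$; let $P_k\subseteq F_k$ be the corresponding pattern, so that $\cY_i\cap\sigma_i(F_k)C_{k,i}\approx\sigma_i(P_k)C_{k,i}$ in the Loeb sense and $\alpha\approx\sum_k\lambda_k|P_k|/|F_k|$ with $\lambda_k=|C_{k,i}||F_k|/d_i$.

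To close the argument I would use a second family of symmetries in $G'$: the within-tile right-translations $R^{(i)}_g(\sigma_s(c))=\sigma_{sg}(c)$ (defined when $sg\in F_k$ and equal to the identity elsewhere) for $g\in G$, which again lie in $G'$ by the same diagonalization once $F_k$ is sufficiently $\{g\}$-invariant. Invariance of $Y$ under each such $R_g$ translates into approximate right-$g$-invariance of $P_k$ as a subset of $F_k$, for every $g\in G$; iterating these constraints along a generating set of $G$ and combining with the flexibility to refine the Rokhlin parameters forces each $|P_k|/|F_k|$ to $0$ or $1$ in the ultralimit, so $\alpha\in\{0,1\}$, contradicting $\alpha\in(0,1)$.

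The main obstacle is this last step. The approximate right-invariance is $o(1)$ only for each fixed $g$, and extracting a sharp $\{0,1\}$ dichotomy for $|P_k|/|F_k|$ requires careful management of how the errors compound when right-translations are iterated along a generating set of $G$, together with a delicate interplay between the multi-class Rokhlin geometry and the asymptotic value $\alpha$. This is where amenability of $G$ enters most fundamentally, beyond its role in making Lemma~\ref{L-Rokhlin2} available.
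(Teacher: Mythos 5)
Your construction of the tile-permuting elements $\pi\in G'$ and the consequent ``column structure'' of an invariant internal set (namely $\cY_i\approx\bigsqcup_k\sigma_i(P_k)C_{k,i}$ for patterns $P_k\subseteq F_k$) is sound, and the variance identity $\mathbb{E}|\beta(B)\triangle B|=2|B|(1-|B|/|C|)$ is exactly the computation the paper uses in the residually finite case (Theorem~\ref{T-rf ergodic}). But the final step is a genuine gap, not a technicality of error management. Approximate right-invariance of $P_k$ under each \emph{fixed} $g\in G$, with error $o(1)$ as the F{\o}lner sets grow, does not force $|P_k|/|F_k|$ toward $\{0,1\}$: for $G=\Zb$, $F_k=\{1,\dots,N\}$ and $P_k=\{1,\dots,\lfloor N/2\rfloor\}$ one has $|P_kg^{-1}\triangle P_k|=O(|g|)=o(N)$ for every fixed $g$, yet the density is $1/2$. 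To exclude such patterns you would need right-translations by elements comparable to the diameter of $F_k$ (e.g.\ the shift by $N/2$ in the example), and those do not preserve $F_k$ even approximately, so the within-tile map is no longer close to a permutation commuting with the left action. This is precisely the obstruction that makes the exact-group computation of Theorem~\ref{T-rf ergodic} unavailable for general F{\o}lner tiles, and your proposal does not supply a substitute. (A smaller issue: the vanishing of the weighted sum $\sum_{k,s}\frac{|C_k|}{d_i}a^{(k)}_{i,s}(1-a^{(k)}_{i,s})$ gives $a^{(k)}_{i,s}\in\{0,1\}$ only for a set of pairs $(k,s)$ of nearly full weight, not for every pair; and the patterns $P_k$ vary with $i$ through the diagonalization, which further complicates any iteration scheme.)

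The paper avoids this difficulty by proving a stronger, symmetric statement: for \emph{any} two positive-measure internal sets $Y=\prod_\fF\cY_n$ and $Z=\prod_\fF\cZ_n$ there is a single $\lambda>0$ and elements $\varphi\in\Sym(d_n)$, approximately commuting with $\sigma_n$, with $|\varphi(\cY_n)\cap\cZ_n|\ge\lambda d_n$ (taking $Z=Y^c$ then contradicts invariance). The permutation $\varphi$ is built not from symmetries of a single tiling but from a matching between two tower systems: Lemma~\ref{L-bijection} classifies the tower bases $c$ by the pattern $F_j\subseteq F$ of positions where $\sigma(F)c$ meets $\cJ_1$, and pairs each with a base $c'$ whose tile meets $\cJ_2$ on a $\tau/2$-fraction of that same pattern, so that $\sigma_s(c)\mapsto\sigma_s(\varphi_k(c))$ carries a definite fraction of $\cY_n$ into $\cZ_n$; a second, independent quasitiling (Lemma~\ref{L-Rokhlin1}) on the complement controls the commutator with $\sigma_n$. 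That pattern-matching device is the idea your argument is missing, and without it (or an equivalent) the contradiction with $\alpha\in(0,1)$ is not reached.
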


The proof of Theorem~\ref{T-amenable ergodic} requires several lemmas.

We will use the following terminology.
Let $(X,\mu )$ be a finite measure space and let $\delta \geq 0$.
A family of measurable subsets of $X$ is said to {\it $\delta$-cover}
$X$ if its union has measure at least $\delta \mu (X)$.
A collection $\{ A_i \}_{i\in I}$ of positive measure sets is
{\it $\delta$-disjoint} if there exist pairwise disjoint sets $\widehat{A}_i \subseteq A_i$ such that
$\mu (\widehat{A}_i ) \geq (1-\delta ) \mu (A_i )$ for all $i\in I$.

The following is the Rokhlin lemma for sofic approximations, which is based on the quasitiling theory
of Ornstein and Weiss and appears as Lemma~4.5 in \cite{KerLi10}. The statement of the latter does not contain
condition (3) below, but it is not hard to see from the proof in \cite{KerLi10} that it can be arranged.

\begin{lemma}\label{L-Rokhlin1}
Let $G$ be a countable discrete group.
Let $0\le \theta<1$, and $0<\eta<1$. Then there are an $\ell'\in \Nb$ and $\kappa, \eta''>0$
such that, whenever $e\in F_1\subseteq F_2\subseteq \cdots \subseteq F_{\ell'}$ are finite subsets of $G$
with $|(F_{k-1}^{-1}F_k) \setminus F_k|\le \kappa |F_k|$ for $k=2, \dots, \ell'$, there exist
$\lambda_1 , \dots, \lambda_{\ell'} \in [0,1]$
such that
for every $\delta>0$, every sufficiently large $d\in \Nb$ (depending on $\delta$),
every map $\sigma: G\rightarrow \Sym(d)$ with a set $\cB\subseteq \{1, \dots, d\}$ satisfying $|\cB|\ge (1-\eta'')d$ and
\[
\sigma_{st}(a)=\sigma_s\sigma_t(a), \ \sigma_s(a)\neq \sigma_{s'}(a),\ \sigma_e(a)=a
\]
for all $a\in \cB$ and $s, t, s'\in F_{\ell'}\cup F_{\ell'}^{-1}$ with $s\neq s'$, and every set $\cV\subseteq \{1, \dots, d\}$ with $|\cV|\ge (1-\theta)d$,
there exist $\cC_1, \dots, \cC_{\ell'}\subseteq \cV$ such that
\begin{enumerate}
\item for every $k=1, \dots, \ell'$ and $c\in \cC_k$, the map $s\mapsto \sigma_s(c)$ from $F_k$ to $\sigma(F_k)c$ is bijective,

\item the sets $\sigma(F_1)\cC_1, \dots, \sigma(F_{\ell'})\cC_{\ell'}$ are pairwise disjoint
and the family $\bigcup_{k=1}^{\ell'}\{\sigma(F_k)c: c\in \cC_k\}$ is $\eta$-disjoint and $(1-\theta-\eta)$-covers $\{1, \dots, d\}$,

\item $\sum_{k=1}^{\ell'}||\sigma(F_k)\cC_k|/d-\lambda_k|<\delta$.
\end{enumerate}
\end{lemma}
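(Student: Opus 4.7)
The plan is to leverage the proof of Lemma~4.5 in \cite{KerLi10}, which already establishes conditions (1) and (2), and argue that the densities $|\sigma(F_k)\cC_k|/d$ can be pinned down to fixed asymptotic values $\lambda_k$ depending only on the parameters $\theta$, $\eta$, and the sequence $F_1 \subseteq \cdots \subseteq F_{\ell'}$.

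First, I would recall the structure of that proof. The parameters $\ell'$, $\kappa$, $\eta''$ there come from an Ornstein--Weiss style quasitiling construction: given a sufficiently left-invariant nested chain $F_1\subseteq\cdots\subseteq F_{\ell'}$, a greedy/inductive algorithm produces quasi-tile centers $\cC_{\ell'}, \cC_{\ell'-1}, \dots, \cC_1$ (in decreasing order of $k$), where at stage $k$ one selects $\cC_k$ inside the uncovered portion of $\cV$ to maximize (up to $\eta$-disjointness) the newly covered set $\sigma(F_k)\cC_k$. The existing statement only records that the resulting family $\eta$-disjointly $(1-\theta-\eta)$-covers $\{1,\dots,d\}$.

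The second step is to extract the $\lambda_k$. I would define $\lambda_k$ abstractly as the limit density produced by running the same greedy construction on the ``ideal'' left-regular action of $G$ on itself (or, equivalently, on the Loeb-type ultralimit of good sofic approximations, where $G$ acts freely and measure-preservingly on a probability space by the asymptotic freeness and multiplicativity hypotheses). Concretely, by the Ornstein--Weiss quasitiling theorem applied to any free measure-preserving action of $G$ on a standard probability space, the greedy algorithm returns, for each $k$, centers whose $F_k$-translates have a well-defined measure $\lambda_k\in[0,1]$ depending only on $\theta$, $\eta$, and the combinatorics of $(F_1,\dots,F_{\ell'})$; in particular $\sum_k \lambda_k$ agrees with the overall covering density up to error controlled by $\eta$.

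Third, I would transfer this back to the finite sofic setting. Given $\delta>0$, by choosing $\eta''$ small enough and $d$ large enough, the local structure of $\sigma$ on its ``good'' set $\cB$ mimics the free measure-preserving action to within tolerance $\delta/(2\ell')$ in Hamming-type distances. Running the greedy algorithm on $\sigma$ then produces $\cC_k$ for which each per-stage density $|\sigma(F_k)\cC_k|/d$ differs from $\lambda_k$ by at most $\delta/\ell'$, since the greedy rule at each stage is determined (up to $o(1)$) by a volume count that has the same asymptotics as in the ideal model. Summing over $k$ yields condition~(3). Conditions (1) and (2) are preserved because they are exactly what \cite[Lem.~4.5]{KerLi10} already delivers from the same construction.

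The main obstacle is the third step: ensuring that the greedy density at stage $k$ really is essentially uniquely determined, rather than being one choice among many compatible with (1) and (2). The resolution is that the algorithm is canonical once one fixes a consistent rule for breaking ties (e.g., maximize the newly covered volume at each stage), and any two choices that obey the same rule must agree to within $o(d)$ because both are driven by the same finite combinatorial data on $F_{\ell'}\cup F_{\ell'}^{-1}$ acting on a nearly-free piece of $\{1,\dots,d\}$. This is exactly the observation hinted at in the remark preceding the statement, and once it is made precise, condition~(3) follows without any additional machinery.
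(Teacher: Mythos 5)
Your strategy of defining the $\lambda_k$ as canonical greedy densities produced by an idealized model, and then transferring back to $\sigma$, runs into a genuine obstruction, and it is also not the route the paper intends. The problem in the lemma has an input that a left-regular or Loeb-type model has no analogue of: the arbitrary subset $\cV\subseteq\{1,\dots,d\}$ with $|\cV|\ge(1-\theta)d$, which is universally quantified \emph{after} the $\lambda_k$ are fixed. The greedy (volume-maximizing) densities at each stage depend genuinely on $\cV$, not merely on $|\cV|/d$. For instance with $G=\Zb$, a tile $F=\{0,1\}$, and $\eta<1/2$, take on one hand $\cV$ the set of even integers in $\{1,\dots,d\}$, and on the other hand $\cV'$ the set of integers congruent to $0$ or $1$ modulo $4$. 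Both have density $1/2$, but a maximal disjoint family of translates $\sigma(F)c$ with $c\in\cV$ covers essentially all of $\{1,\dots,d\}$, while for $\cV'$ no such family covers more than density $1/2 + o(1)$. So the "canonicity up to $o(d)$'' that your step three leans on fails: there is no single number that the maximizing greedy rule outputs across all admissible $\cV$.

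The resolution is essentially the opposite of canonical maximization: one should fix the $\lambda_k$ in advance as \emph{target} densities determined only by $\theta$, $\eta$, $\ell'$ and the invariance of the $F_k$, and then \emph{trim} the quasitiling constructed in the proof of Lemma~4.5 of \cite{KerLi10} down to those targets. The point is a one-sided asymmetry that the lemma statement already encodes: the covering requirement in condition~(2) is a lower bound, and Lemma~\ref{L-disjointcover} guarantees a lower bound on what a maximal $\eta$-disjoint subfamily covers at each stage, so at every stage one has at least the target available and may simply discard tiles until the per-stage density falls to $\lambda_k\pm\delta/\ell'$; nothing in the construction needs an upper bound on achievable density. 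Since the targets are chosen before $d$, $\sigma$, and $\cV$, condition~(3) is then automatic, and conditions (1) and (2) persist because trimming preserves $\eta$-disjointness and pairwise disjointness while the cumulative trimmed coverage still meets $(1-\theta-\eta)$. This is precisely the light modification the remark before the lemma alludes to, and it requires no passage to an ultralimit or ergodic-theoretic model.
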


\begin{lemma}\label{L-density}
Let $G$ be a countable discrete group. Let
$F$ be a nonempty finite subset of $G$. For every $d\in \Nb$, every map $\sigma: G\rightarrow \Sym(d)$,
every set $\cB\subseteq \{1, \dots, d\}$ satisfying
\[
\sigma_s(a)\neq \sigma_t(a)
\]
for all $a\in \cB$ and distinct $s, t\in F$, every $\cJ\subseteq \{1, \dots, d\}$, and every $0<\lambda<1$,
there exists a $\cV\subseteq \cB$ such that $|\cV|\ge \frac{|\cB|(1-\lambda)-d+|\cJ| }{1-\lambda}$ and
$|\sigma(F)a\cap \cJ|>\lambda |F| $ for all $a\in \cV$.
\end{lemma}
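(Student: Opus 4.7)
The plan is a straightforward double counting argument. I would let
\[
\cV := \{a \in \cB : |\sigma(F)a \cap \cJ| > \lambda|F|\}
\]
and show directly that its complement in $\cB$ is small. First, observe that by the injectivity hypothesis on $\sigma$ restricted to $F$ acting on $\cB$, we have $|\sigma(F)a| = |F|$ for every $a \in \cB$, so for each such $a$ we may write $|\sigma(F)a \cap \cJ^{c}| = |F| - |\sigma(F)a \cap \cJ|$, where $\cJ^{c} := \{1,\dots,d\}\setminus \cJ$.

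The main step is to estimate the double sum $S := \sum_{a \in \cB} |\sigma(F)a \cap \cJ^{c}|$ in two ways. On one hand, swapping the order of summation gives
\[
S \;=\; \sum_{s \in F} \big|\{a \in \cB : \sigma_s(a) \in \cJ^{c}\}\big| \;\le\; \sum_{s \in F} |\cJ^{c}| \;=\; |F|\,(d - |\cJ|),
\]
since each $\sigma_s$ is a permutation of $\{1,\dots,d\}$. On the other hand, discarding the contribution from $\cV$ and using that for $a \in \cB \setminus \cV$ we have $|\sigma(F)a \cap \cJ^{c}| \ge (1-\lambda)|F|$, we obtain
\[
S \;\ge\; |\cB \setminus \cV|\,(1-\lambda)\,|F|.
\]

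Combining the two bounds and cancelling $|F|$ gives $|\cB \setminus \cV| \le (d-|\cJ|)/(1-\lambda)$, hence
\[
|\cV| \;\ge\; |\cB| - \frac{d-|\cJ|}{1-\lambda} \;=\; \frac{|\cB|(1-\lambda) - d + |\cJ|}{1-\lambda},
\]
which is the desired inequality. There is no real obstacle here: the argument is entirely elementary, and the hypothesis that $\sigma_s(a)\neq \sigma_t(a)$ for distinct $s,t\in F$ and $a\in\cB$ is used only to ensure that the ``lower bound'' inequality $|\sigma(F)a \cap \cJ^c|\ge (1-\lambda)|F|$ is valid with $|F|$ (and not some smaller number) on the right-hand side.
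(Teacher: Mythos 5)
Your proof is correct and is essentially the same double-counting argument as the paper's: the paper bounds $\sum_{a\in\cB}|\sigma(F)a\cap\cJ|$ from below and then from above by splitting $\cB$ into $\cV$ and its complement, while you do the equivalent computation with $\cJ^{c}$ in place of $\cJ$. The complementation step $|\sigma(F)a\cap\cJ^{c}|=|F|-|\sigma(F)a\cap\cJ|$ is justified exactly as you say, by the injectivity hypothesis, so there is no gap.
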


\begin{proof}
Set $X=\{1, \dots, d\}$.
Denote by $\zeta$ the uniform probability measure on $X$.
One has
\begin{align*}
\frac{1}{d}\sum_{a\in \cB}|\sigma(F)a\cap \cJ|&=\int_{\cJ}\sum_{a\in \cB} \mathbf{1}_{\sigma(F)a}(x)\, d\zeta(x)\\
&=\int_X\sum_{a\in \cB}\mathbf{1}_{\sigma(F)a}(x)\, d\zeta(x)-\int_{X\setminus \cJ}\sum_{a\in \cB} \mathbf{1}_{\sigma(F)a}(x)\, d\zeta(x)\\
&\ge \frac{|\cB|\cdot |F|}{d}-\int_{X\setminus \cJ}|F| \, d\zeta(x)\\
&=\frac{|\cB|\cdot |F|}{d}-\bigg(1-\frac{|\cJ|}{d}\bigg)|F|.
\end{align*}
Set $\cV=\{a\in \cB: |\sigma(F)a\cap \cJ|> \lambda |F|\}$. Then
\begin{align*}
\frac{1}{d}\sum_{a\in \cB}|\sigma(F)a\cap \cJ|\le \frac{|\cV|\cdot |F|}{d}+\frac{(|\cB|-|\cV|)\lambda |F|}{d}.
\end{align*}
Thus
\[
\frac{|\cV|\cdot |F|}{d}+\frac{(|\cB|-|\cV|)\lambda |F|}{d}\ge \frac{|\cB|\cdot |F|}{d}-\bigg(1-\frac{|\cJ|}{d}\bigg)|F|.
\]
It follows that
\[
|\cV|\ge \frac{|\cB|(1-\lambda)-d+|\cJ| }{1-\lambda}.
\qedhere
\]
\end{proof}

The proof of Lemma~4.4 in \cite{KerLi10} shows the following.

\begin{lemma}\label{L-disjointcover}
Let $(X,\mu )$ be a finite measure space.
Let $\delta ,\eta\in [0,1)$ and
let $\{ A_i \}_{i\in I}$ be a finite $\delta$-even covering of $X$ by positive measure sets.
Then every $\eta$-disjoint subcollection of $\{ A_i \}_{i\in I}$ can be enlarged to an $\eta$-disjoint
subcollection of $\{ A_i \}_{i\in I}$ which $\eta (1-\delta )$-covers $X$.
\end{lemma}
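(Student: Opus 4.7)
The plan is to produce the enlarged collection by a greedy procedure and to bound the measure of its union using the two inequalities built into the definition of a $\delta$-even covering (a pointwise upper bound $\sum_i \mathbf{1}_{A_i} \leq M$ on the multiplicity, combined with the lower bound $\sum_i \mu(A_i) \geq (1-\delta)M\mu(X)$ on the average).

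Starting from the given $\eta$-disjoint subcollection $\{A_i\}_{i\in J_0}$ with pairwise disjoint witnesses $\widehat{A}_i \subseteq A_i$ satisfying $\mu(\widehat{A}_i) \geq (1-\eta)\mu(A_i)$, set $W = \bigcup_{i\in J_0}\widehat{A}_i$. Iteratively, so long as there is some $i \in I\setminus J$ (where $J$ is the current index set) with $\mu(A_i \cap W) \leq \eta\mu(A_i)$, adjoin $i$ to $J$ and enlarge $W$ by the set $\widehat{A}_i := A_i\setminus W$, which is by construction disjoint from all previously chosen witnesses and of measure at least $(1-\eta)\mu(A_i)$. Since $I$ is finite, the procedure terminates at a maximal index set $J \supseteq J_0$, and the resulting family $\{A_i\}_{i\in J}$ is $\eta$-disjoint.

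Let $V = \bigcup_{i\in J} A_i$, so $V \supseteq W$. By maximality, every $i \in I\setminus J$ satisfies $\mu(A_i\cap W) > \eta\mu(A_i)$, and hence $\mu(A_i\cap V) > \eta\mu(A_i)$; for $i \in J$ we have $A_i \subseteq V$, so $\mu(A_i\cap V) = \mu(A_i) \geq \eta\mu(A_i)$. Summing over $i \in I$ gives
\[
\sum_{i\in I}\mu(A_i\cap V) \;\geq\; \eta \sum_{i\in I}\mu(A_i) \;\geq\; \eta(1-\delta)M\mu(X).
\]
On the other hand, the pointwise multiplicity bound yields
\[
\sum_{i\in I}\mu(A_i\cap V) \;=\; \int_V \Big(\sum_{i\in I}\mathbf{1}_{A_i}\Big)\,d\mu \;\leq\; M\mu(V).
\]
Combining the two estimates gives $\mu(V) \geq \eta(1-\delta)\mu(X)$, which is precisely the assertion that $\{A_i\}_{i\in J}$ is an $\eta(1-\delta)$-cover.

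The argument is essentially a direct application of the greedy procedure, so I do not foresee a serious obstacle. The only point that requires a moment's care is verifying that the new witness $A_i\setminus W$ chosen at each step does not shrink below $(1-\eta)\mu(A_i)$, which is exactly the condition imposed when the index was selected; after that, the even-cover inequalities deliver the conclusion in one line.
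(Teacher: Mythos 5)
Your proof is correct, and it is essentially the argument the paper is pointing to: the statement is justified there only by reference to the proof of Lemma~4.4 in \cite{KerLi10}, which is exactly this greedy/maximality construction together with the double counting of $\sum_{i\in I}\mu(A_i\cap V)$ against the multiplicity bound. The one point needing care --- that the new witness $A_i\setminus W$ retains measure at least $(1-\eta)\mu(A_i)$ and stays disjoint from the earlier witnesses --- is handled correctly by your selection criterion, so nothing is missing.
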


\begin{lemma}\label{L-bijection}
Let $G$ be a countable discrete group.
Let $F$ be a nonempty finite subset of $G$, $0<\tau\le 1$, and $0<\eta<1/2$. Then
for every large enough $d\in \Nb$, every map $\sigma: G\rightarrow \Sym(d)$
with sets $\cB_1, \cB_2 \subseteq \{1, \dots, d\}$ satisfying
$|\cB_i|\ge (\frac{\tau}{2}+\frac{2-2\tau}{2-\tau})d$ and
\[\sigma_s(a)\neq \sigma_{t}(a)\]
for all $a\in \cB_i$ and distinct $s, t\in F$,
and every $\cJ_1, \cJ_2\subseteq \{1, \dots, d\}$ with $|\cJ_i|\ge \tau d$ for $i=1, 2$,
there exist $\cC_i\subseteq \cB_i$ such that
\begin{enumerate}
\item for every $i=1, 2$, the family $\{\sigma(F)c: c\in \cC_i\}$ is $\eta$-disjoint
and $\eta\frac{\tau}{16}$-covers $\{1, \dots, d\}$,

\item there is a bijection $\varphi: \cC_1\rightarrow \cC_2$ such that for any $c\in \cC_1$, one has
$|\{s\in F: \sigma_s(c)\in \cJ_1, \sigma_s(\varphi(c))\in \cJ_2\}|\ge (\frac{\tau}{2})^2 |F|$.
\end{enumerate}
\end{lemma}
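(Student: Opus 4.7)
The strategy is to first use Lemma~\ref{L-density} to localize to subsets $\cV_i \subseteq \cB_i$ on which orbits meet $\cJ_i$ substantially, then carry out a simultaneous greedy quasitiling construction for $\cC_1,\cC_2$ together with the bijection $\varphi$. Applying Lemma~\ref{L-density} with parameter $\lambda=\tau/2$ to each pair $(\cB_i,\cJ_i)$, the hypothesis $|\cB_i|\ge (\tau/2+(2-2\tau)/(2-\tau))d$ has been calibrated so that a direct calculation produces $\cV_i\subseteq\cB_i$ of cardinality at least $(\tau/2)d$ satisfying $|\sigma(F)a\cap\cJ_i|>(\tau/2)|F|$ for every $a\in\cV_i$.

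Writing $F_c=\{s\in F:\sigma_s(c)\in\cJ_1\}$ and $F'_b=\{s\in F:\sigma_s(b)\in\cJ_2\}$, the key averaging identity
\[
\sum_{(c,b)\in\cB_1\times\cB_2} |F_c\cap F'_b|
= \sum_{s\in F} |\cB_1\cap\sigma_s^{-1}(\cJ_1)|\cdot|\cB_2\cap\sigma_s^{-1}(\cJ_2)| ,
\]
combined with the pointwise estimate $|\cB_i\cap\sigma_s^{-1}(\cJ_i)|\ge |\cB_i|+|\cJ_i|-d$, gives a lower bound on the average value of $|F_c\cap F'_b|/|F|$ over $(c,b)\in\cB_1\times\cB_2$. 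Writing $\gamma=\tau/2+(2-2\tau)/(2-\tau)$, the algebraic identity $\gamma(1-\tau/2)=(\tau/2)(1-\tau/2)+(1-\tau)$ implies $(\gamma+\tau-1)/\gamma=\tau/2+\tau(2-\tau)/(4\gamma)$, so that the average exceeds the threshold $(\tau/2)^{2}$ with an explicit quantitative margin; consequently a positive density of pairs in $\cB_1\times\cB_2$ satisfies $|F_c\cap F'_b|\ge(\tau/2)^{2}|F|$.

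I would then build $\cC_1\subseteq\cV_1$, $\cC_2\subseteq\cV_2$ and $\varphi$ in parallel by greedy selection. Writing $U_i=\bigcup_{c\in\cC_i}\sigma(F)c$ for the current cover, at each stage I seek an admissible pair $(c,b)\in(\cV_1\setminus\cC_1)\times(\cV_2\setminus\cC_2)$ such that $|\sigma(F)c\cap U_1|\le\eta|F|$ and $|\sigma(F)b\cap U_2|\le\eta|F|$ (so that $\eta$-disjointness is preserved when the orbits are appended) together with $|F_c\cap F'_b|\ge(\tau/2)^{2}|F|$ (so that the assignment $\varphi(c):=b$ meets the bijection requirement), and I add such a pair. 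The procedure terminates once both $|U_1|$ and $|U_2|$ reach $\eta\tau d/16$.

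The main obstacle is to verify that an admissible pair always exists while the cover target has not yet been met. The double-counting estimate $\sum_{a\in\cB_i}|\sigma(F)a\cap U_i|\le|F|\cdot|U_i|$ bounds the number of blocked candidates on each side by $|U_i|/\eta\le\tau d/16$. Reconciling this with the density of good pairs produced by the averaging argument---and doing so uniformly throughout the greedy process---is the delicate point, and it is here that the precise constants $\gamma$ and $\eta\tau/16$ in the statement are essential: they are chosen so that after excluding blocked candidates on both sides there remain sufficiently many good pairs to choose from in the residual product $(\cV_1\setminus\cC_1)\times(\cV_2\setminus\cC_2)$. A Hall-type matching argument applied to the residual good-pair graph can be used to certify at each stage that the greedy step succeeds, and the construction is completed by taking $d$ large enough to absorb the error terms coming from $\eta$-disjointness.
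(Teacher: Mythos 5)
Your overall architecture (the averaging identity over pairs followed by a greedy simultaneous selection) is genuinely different from the paper's proof and is a plausible route, and your preliminary computations are correct: the identity $\sum_{(c,b)}|F_c\cap F'_b|=\sum_{s\in F}|\cB_1\cap\sigma_s^{-1}(\cJ_1)|\cdot|\cB_2\cap\sigma_s^{-1}(\cJ_2)|$ and the algebraic calibration of $\gamma=\frac{\tau}{2}+\frac{2-2\tau}{2-\tau}$ both check out. But the step you yourself flag as delicate is exactly where the argument is incomplete, and the mechanism you sketch for it does not work. Counting good pairs against blocked pairs: the averaging argument yields at least $\bigl[(\gamma+\tau-1)^2-(\tau/2)^2\bigr]d^2\approx\frac{3}{4}\tau^2d^2$ good pairs, while the blocked candidates (up to $|U_i|/\eta\le\frac{\tau}{16}d$ per side) can kill up to $\frac{\tau}{8}d^2$ pairs; for $\tau\le 1/6$ the second quantity exceeds the first, so "excluding blocked candidates and still having good pairs left" is false as a direct count. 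The invocation of a "Hall-type matching argument" does not repair this, since the obstruction is the count itself, not the matching. The correct way to close the gap within your framework is not to subtract blocked pairs but to re-run the averaging identity at each greedy stage over the residual sets $\cB_i'=\cB_i\setminus(\text{blocked}\cup\cC_i)$: since $|\cB_i'|\ge(\gamma-\frac{\tau}{16}-o(1))d$ and $\gamma-\frac{\tau}{16}>\frac{2-2\tau}{2-\tau}$ (this is precisely what the $\frac{\tau}{2}$ slack in the hypothesis buys), the residual average of $|F_c\cap F'_b|/|F|$ still exceeds $(\tau/2)^2$, so an admissible pair exists. A secondary inconsistency: you establish the density of good pairs over $\cB_1\times\cB_2$ but select from $\cV_1\times\cV_2$; the sets $\cV_i$ are in fact unnecessary in your approach, since the pair condition subsumes the individual orbit conditions.

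For comparison, the paper avoids any pair-counting by a type decomposition. It first extracts $\cW_1\subseteq\cV_1$ whose $F$-orbits are $\eta$-disjoint and cover enough (via Lemma~\ref{L-disjointcover}), then partitions $\cW_1$ according to which subset $F_j\subseteq F$ of cardinality $\lceil\tau|F|/2\rceil$ satisfies $\sigma(F_j)c\subseteq\cJ_1$, and for each type $j$ applies Lemma~\ref{L-density} on the second side \emph{with $F_j$ in place of $F$} to find candidates $b$ with $|\sigma(F_j)b\cap\cJ_2|\ge\frac{\tau}{2}|F_j|$. Matching within each type then gives the intersection bound $\ge\frac{\tau}{2}|F_j|\ge(\frac{\tau}{2})^2|F|$ for free, because every $s\in F_j$ already sends $c$ into $\cJ_1$. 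This decouples the two sides so that only one-sided density estimates are ever needed, at the cost of a recursive bookkeeping argument to equalize $|\cC_{1,j}|$ and $|\cC_{2,j}|$ while preserving $\eta$-disjointness. Your route, once the residual-averaging fix is in place, is arguably more symmetric; but as written the central claim is asserted rather than proved.
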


\begin{proof}
Note that
for all distinct $a, c\in \{1, \dots, d\}$ and $s\in F$ we have
\[
\sigma_s(a)\neq \sigma_s(c).
\]

Taking $\lambda=\tau/2$ and $\cJ=\cJ_1$ in Lemma~\ref{L-density}, we find a $\cV_1\subseteq \cB_1$
such that $|\cV_1|/d\ge \frac{|\cB_1|(1-\lambda)/d-1+|\cJ_1|/d}{1-\lambda}\ge \frac{(\tau/2+(2-2\tau)/(2-\tau))(1-\lambda)-1+\tau}{1-\lambda}=\frac{\tau}{2}$
and $|\sigma(F)a\cap \cJ_1|/|F|\ge \frac{\tau}{2}$ for all $a\in \cV_1$.
Observe that
\[
\sum_{c\in \cV_1}|\sigma(F)c|=|F|\cdot |\cV_1|\ge |F|\cdot \frac{\tau}{2}d=|F|\cdot \bigg(1-\frac{2-\tau}{2}\bigg)d ,
\]
so that the family $\{\sigma(F)c\}_{c\in \cV_1}$ is a $\frac{2-\tau}{2}$-even covering of $\{1, \dots, d\}$ with
multiplicity $|F|$. By Lemma~\ref{L-disjointcover}, we can find a set
$\cW_1\subseteq \cV_1$ such that the family $\{\sigma(F)c\}_{c\in \cW_1}$ is $\eta$-disjoint and
$\eta\frac{\tau}{2}$-covers $\{1, \dots, d\}$.
We may assume that $|\sigma(F)\cW_1|< \eta \tau d/2 +|F|$.

List all the subsets of $F$ with cardinality $\lceil |F|\tau/2\rceil$ as  $F_1, \dots, F_n$ for some $n\in \Nb$,
where $\lceil x\rceil$ for a real number $x$ denotes the smallest integer no less than $x$.
Then we can write $\cW_1$ as the disjoint union of sets $\cW_{1, j}$ for $1\le j\le n$ such that $\sigma(F_j)c\subseteq \cJ_1$
for all $1\le j\le n$ and $c\in \cW_{1, j}$.  Throwing away those empty
$\cW_{1, j}$, we may assume that each $\cW_{1, j}$ is nonempty.

For each $1\le j\le n$, taking $\lambda=\tau/2$ and $\cJ=\cJ_2$ in Lemma~\ref{L-density}, we  find a
$\cV_{2, j}\subseteq \cB_2$ such that
$|\cV_{2, j}|/d\ge \frac{|\cB_2|(1-\lambda)/d-1+|\cJ_2|/d}{1-\lambda}\ge \frac{(\tau/2+(2-2\tau)/(2-\tau))(1-\lambda)-1+\tau}{1-\lambda}=\frac{\tau}{2}$
and $|\sigma(F_j)a\cap \cJ_2|/|F_j|\ge \frac{\tau}{2}$ for all $a\in \cV_{2, j}$.

We will recursively construct pairwise disjoint sets $\cC_{2, 1}, \dots, \cC_{2, n}$ such that the family
 $\{\sigma(F)c: c\in \cC_{2, j}, 1\le j\le n\}$ is $\eta$-disjoint, and $\cC_{2, j}\subseteq \cV_{2, j}$ and $|\cC_{2,  j}|=\lfloor |\cW_{1,  j}|/2\rfloor$
for every $1\le j\le n$, where $\lfloor x\rfloor$  for any real number $x$ denotes the largest integer no bigger than $x$.

Note that
\[
\sum_{c\in \cV_{2,  1}}|\sigma(F)c|=|F|\cdot |\cV_{2,  1}|\ge |F|\cdot \frac{\tau}{2}d=|F|\cdot \bigg(1-\frac{2-\tau}{2}\bigg)d ,
\]
so that the family $\{\sigma(F)c\}_{c\in \cV_{2, 1}}$ is a $\frac{2-\tau}{2}$-even covering of $\{1, \dots, d\}$ with
multiplicity $|F|$. By Lemma~\ref{L-disjointcover}, we can find a set
$\cW_{2,  1}\subseteq \cV_{2,  1}$ such that the family $\{\sigma(F)c\}_{c\in \cW_{2,  1}}$ is $\eta$-disjoint and
$\eta\frac{\tau}{2}$-covers $\{1, \dots, d\}$. Note that
\[
|\cW_{2, 1}|\cdot |F|\ge |\sigma(F)\cW_{2,  1}|\ge \eta\frac{\tau}{2}d,
\]
and since the family $\{\sigma(F)c\}_{c\in \cW_1}$ is $\eta$-disjoint, we have
\begin{align} \label{E-Rokhlin1}
 \frac{1}{2}|\cW_1|\cdot |F|\le(1-\eta)|\cW_1|\cdot |F|\le |\sigma(F)\cW_1|< \eta\frac{\tau}{2}d+|F|.
\end{align}
Thus
\[
\frac{1}{2}|\cW_1|\cdot |F|<|\cW_{2, 1}|\cdot |F|+|F|,
\]
and hence
\[
|\cW_{2, 1}|> \frac{1}{2}|\cW_1|-1\ge \frac{1}{2}|\cW_{1, 1}|-1.
\]
Therefore we can take a subset $\cC_{2, 1}$ of $\cW_{2, 1}$ with cardinality $\lfloor \frac{1}{2}|\cW_{1,  1}|\rfloor$.

Suppose that we have found pairwise disjoint sets $\cC_{2,  1}, \dots, \cC_{2,  k}$ for some $1\le k<n$
such that the family $\{\sigma(F)c: c\in \cC_{2, j}, 1\le j\le k\}$ is $\eta$-disjoint, and $\cC_{2,  j}\subseteq \cV_{2,  j}$
and $|\cC_{2, j}|=\lfloor |\cW_{1,  j}|/2\rfloor$ for every $1\le j\le k$. Note that
\begin{align*}
\sum_{c\in \cV_{2,  k+1}\cup \bigcup_{1\le j\le k}\cC_{2, j}}|\sigma(F)c|
&=|F |\cdot \bigg|\cV_{2, k+1}\cup \bigcup_{1\le j\le k}\cC_{2, j} \bigg| \\
&\ge |F|\cdot |\cV_{2,  k+1}|\ge |F|\cdot \frac{\tau}{2}d=|F|\cdot \bigg(1-\frac{2-\tau}{2}\bigg)d ,
\end{align*}
so that the family $\{\sigma(F)c\}_{c\in \cV_{2,  k+1}\cup \bigcup_{1\le j\le k}\cC_{2,  j}}$ is a $\frac{2-\tau}{2}$-even
covering of $\{1, \dots, d\}$ with multiplicity $|F|$. By Lemma~\ref{L-disjointcover}, we can find a set
$\cW_{2, k+1}\subseteq \cV_{2,  k+1}\setminus \bigcup_{1\le j\le k}\cC_{2, j}$ such that the family
$\{\sigma(F)c\}_{c\in \cW_{2,  k+1}\cup \bigcup_{1\le j\le k}\cC_{2,  j}}$ is $\eta$-disjoint and
$\eta\frac{\tau}{2}$-covers $\{1, \dots, d\}$. Note that
\[
\bigg(|\cW_{2, k+1}|+\sum_{1\le j\le k}|\cC_{2,  j}|\bigg)\cdot |F|
\ge \bigg|\sigma(F)\bigg(\cW_{2, k+1}\cup \bigcup_{1\le j\le k}\cC_{2,  j}\bigg)\bigg|\ge \eta\frac{\tau}{2}d.
\]
Thus, combining with \eqref{E-Rokhlin1}, we have
\begin{align*}
\frac{1}{2}\sum_{1\le j\le k+1}|\cW_{1, j}|\cdot |F|&\le \frac{1}{2}|\cW_1|\cdot |F|\\
&<\bigg(|\cW_{2, k+1}|+\sum_{1\le j\le k}|\cC_{2, j}|\bigg)\cdot |F|+|F|\\
&\le \bigg(|\cW_{2, k+1}|+\frac{1}{2}\sum_{1\le j\le k}|\cW_{1, j}|\bigg)\cdot |F|+|F|,
\end{align*}
and hence
\[
|\cW_{2, k+1}|> \frac{1}{2}|\cW_{1, k+1}|-1.
\]
Therefore we can take a subset $\cC_{2, k+1}$ of $\cW_{2, k+1}$ with cardinality $\lfloor \frac{1}{2}|\cW_{1, k+1}|\rfloor$.
This completes the recursive construction.

For each $1\le j\le n$ take a subset $\cC_{1, j}$ of $\cW_{1, j}$ with cardinality $\lfloor |\cW_{1, j}|/2\rfloor$.
Set $\cC_i=\bigcup_{1\le j\le n}\cC_{i, j}$ for $i=1, 2$. Take a bijection $\varphi: \cC_1\rightarrow \cC_2$ such that
$\varphi(\cC_{1, j})=\cC_{2, j}$ for all $1\le j\le n$. For each $c\in \cC_1$, considering $j$ such that $c\in \cC_{1, j}$ one has
\begin{align*}
|\{s\in F: \sigma_s(c)\in \cJ_1, \sigma_s(\varphi(c))\in \cJ_2\}|
&\ge |\{s\in F_j: \sigma_s(\varphi(c))\in \cJ_2\}| \\
&\ge \frac{\tau}{2}|F_j| \ge \bigg(\frac{\tau}{2}\bigg)^2|F|.
\end{align*}
Note that
\[
|\cW_1|\cdot |F|\ge |\sigma(F)\cW_1|\ge \eta\frac{\tau}{2}d,
\]
and hence for $i=1, 2$,
\[
|\cC_i|=\sum_{1\le j\le n}| \cC_{i, j}|\ge \sum_{1\le j\le n}\bigg(\frac{1}{2}|\cW_{1,  j}|-1\bigg)
\ge \frac{1}{2}|\cW_1|-2^{|F|}\ge \frac{1}{4}|\cW_1|
\]
when $d$ is sufficiently large.
Since the family $\{\sigma(F)c: c\in \cC_i\}$ is $\eta$-disjoint, we get
\[
|\sigma(F)\cC_i|\ge (1-\eta)|\cC_i|\cdot |F|\ge (1-\eta)\eta\frac{\tau}{8}d\ge \eta\frac{\tau}{16}d.
\]
\end{proof}

\begin{lemma} \label{L-Rokhlin}
Let $G$ be a countable discrete group.
Let $0<\tau\le 1$, and $0<\eta<1/2$ with $\eta\frac{\tau}{16}<\frac{1-\tau'}{24}$,
where $\tau'=\frac{\tau}{2}+\frac{2-2\tau}{2-\tau}<1$.
Then there are an $\ell\in \Nb$ and $\eta'>0$
such that, whenever $e\in F_1\subseteq F_2\subseteq \cdots \subseteq F_\ell$
are finite subsets of $G$ with $|(F_{k-1}^{-1}F_k) \setminus F_k|\le |F_k|$
for $k=2, \dots, \ell$,
for every large enough $d\in \Nb$, every map $\sigma: G\rightarrow \Sym(d)$ with a set $\cB\subseteq \{1, \dots, d\}$
satisfying $|\cB|\ge (1-\eta')d$ and
\[\sigma_{st}(a)=\sigma_s\sigma_t(a),\ \sigma_s(a)\neq \sigma_{s'}(a),\ \sigma_e(a)=a\]
for all $a\in \cB$ and $s, t, s'\in F_\ell\cup F_\ell^{-1}$ with $s\neq s'$,
and any $\cJ_1, \cJ_2\subseteq \{1, \dots, d\}$ with $|\cJ_i|\ge \tau d$ for $i=1, 2$,
there exist $\cC_{i, 1}, \dots, \cC_{i, \ell}\subseteq \cB$ such that
\begin{enumerate}
\item for every $i=1, 2$, $k=1, \dots, \ell$ and $c\in \cC_{i, k}$, the map $s\mapsto \sigma_s(c)$
from $F_k$ to $\sigma(F_k)c$ is bijective,

\item for every $i=1, 2$, the sets $\sigma(F_1)\cC_{i, 1}, \dots, \sigma(F_\ell)\cC_{i, \ell}$ are pairwise disjoint,
 the family $\bigcup_{k=1}^\ell\{\sigma(F_k)c: c\in \cC_{i, k}\}$ is $\eta$-disjoint, and
$(1-\eta)\frac{1-\tau'}{24}d\le |\bigcup_{k=1}^\ell\sigma(F_k)\cC_{i, k}|\le (\frac{1}{1-\eta}\frac{1-\tau'}{24}+\eta)d$,

\item for every $k=1, \dots, \ell$, there is a bijection $\varphi_k: \cC_{1, k}\rightarrow \cC_{2, k}$
such that for each $c\in \cC_{1, k}$ one has
$|\{s\in F_k: \sigma_s(c)\in \cJ_1, \sigma_s(\varphi_k(c))\in \cJ_2\}|\ge (\frac{\tau}{2})^2 |F_k|$.
\end{enumerate}
\end{lemma}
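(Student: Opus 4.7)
The plan is to adapt the Ornstein--Weiss quasitiling proof of Lemma~\ref{L-Rokhlin1} by plugging Lemma~\ref{L-bijection} into each tiling step, so that the matching bijections $\varphi_k$ are produced one level at a time. The construction is a downward recursion on $k$ from $\ell$ to $1$: given $\cC_{i,\ell},\dots,\cC_{i,k+1}$ already chosen, I would set
\[
\cB^{(k)}_i \;=\; \cB\setminus\bigcup_{k'>k}\sigma\bigl(F_k^{-1} F_{k'}\bigr)\cC_{i,k'},
\]
and note that multiplicativity of $\sigma$ on $F_\ell\cup F_\ell^{-1}$ over $\cB$ guarantees that for every $c\in\cB^{(k)}_i$, the tile $\sigma(F_k)c$ is disjoint from each previously chosen $\sigma(F_{k'})\cC_{i,k'}$ with $k'>k$. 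I would then apply Lemma~\ref{L-bijection} with $F=F_k$, $\cB_i=\cB^{(k)}_i$, and $\cJ_i$ as given, and take the outputs to be $\cC_{1,k}$, $\cC_{2,k}$, and $\varphi_k$. Each such step produces at least $\eta\tau/16\cdot d$ fresh coverage, so choosing $\ell$ with $\ell\cdot\eta\tau/16\ge (1-\tau')/24$ and halting the recursion (leaving the remaining levels empty) the first time the cumulative coverage crosses $(1-\eta)\frac{1-\tau'}{24}d$ keeps the total number of actively used levels within $\ell$.

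Condition~(1) is then inherited from $\cC_{i,k}\subseteq\cB$ together with multiplicativity on $F_k\subseteq F_\ell$. In~(2), cross-level pairwise disjointness of the sets $\sigma(F_k)\cC_{i,k}$ is built into $\cB^{(k)}_i$ by construction, and combined with the within-step $\eta$-disjointness supplied by L-bijection this yields the global $\eta$-disjointness of the full tile family; the lower coverage bound is the halting threshold, while the upper bound $\bigl(\tfrac{1}{1-\eta}\tfrac{1-\tau'}{24}+\eta\bigr)d$ comes from bounding the one-step overshoot at the moment of halting via $\eta$-disjointness. Condition~(3) is exactly the output of L-bijection at level $k$.

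The main obstacle will be maintaining $|\cB^{(k)}_i|\ge\tau'd$ throughout the recursion, since only then can L-bijection actually be invoked. The forbidden region $\bigcup_{k'>k}\sigma(F_k^{-1}F_{k'})\cC_{i,k'}$ is the accumulated tile coverage inflated by an $F_k^{-1}$-shift, and controlling this inflation in terms of the (still small) cumulative coverage is precisely where the quasi-invariance of the $F_k$'s enters---reading the stated $|(F_{k-1}^{-1}F_k)\setminus F_k|\le|F_k|$ as a typo for $\le\kappa|F_k|$ with $\kappa$ small, in line with Lemma~\ref{L-Rokhlin1}. The halting threshold $\frac{1-\tau'}{24}$ is calibrated so that, even after this inflation, the forbidden region together with $\{1,\dots,d\}\setminus\cB$ (of cardinality at most $\eta'd$) removes less than $(1-\tau')d$ from $\cB$, leaving a residual of size at least $\tau'd$ and allowing the recursion to proceed.
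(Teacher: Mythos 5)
Your architecture is the paper's: a downward recursion from $k=\ell$ to $k=1$, at each level deleting from $\cB$ the points whose $F_k$-tile would hit the previously placed tiles (equivalently, deleting $\bigcup_{j>k}\sigma(F_k^{-1}F_j)\cC_{i,j}$), checking that at least $\tau' d$ points of $\cB$ survive, and feeding the survivors into Lemma~\ref{L-bijection} to get the matched pair $\cC_{1,k},\cC_{2,k}$ and the bijection $\varphi_k$. Two points, however, are genuine gaps rather than routine details.

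First, the hypothesis $|(F_{k-1}^{-1}F_k)\setminus F_k|\le |F_k|$ is not a typo for $\le\kappa|F_k|$, and you cannot strengthen it: in the application (the proof of Theorem~\ref{T-amenable ergodic}) only the weak bound is arranged. The constants in the lemma are calibrated precisely so that this crude bound suffices. Concretely, if $\theta d$ denotes the measure of the tiles already placed, then $\eta$-disjointness (with $\eta<1/2$) gives $\sum_{j>k}|F_j|\cdot|\cC_{i,j}|\le \tfrac{1}{1-\eta}\theta d\le 2\theta d$, and since $F_k\subseteq F_{j-1}$ one gets $\big|\bigcup_{j>k}\sigma(F_k^{-1}F_j)\cC_{i,j}\big|\le \sum_{j>k}|(F_{j-1}^{-1}F_j)\setminus F_j|\cdot|\cC_{i,j}|+\theta d\le 3\theta d$. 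With $\eta'=\tfrac{1-\tau'}{2}$ and the recursion stopped as soon as $3\theta>1-\tau'-\eta'$ for either index $i$, one always has $|\cB_{i,k}|\ge(1-\eta')d-3\theta d\ge\tau' d$, and at the stopping moment the accumulated coverage already exceeds the target $\tfrac{1-\tau'}{24}d$. So the inflation is absorbed by a fixed factor of $3$, not by quasi-invariance; your proof as written either proves a different lemma or needs this computation inserted.

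Second, your halting rule does not deliver the two-sided coverage bound in (2). Lemma~\ref{L-bijection} as stated gives only a lower bound $\eta\tfrac{\tau}{16}d$ on the coverage added per step, no upper bound, so "the one-step overshoot at the moment of halting" is not controlled by $\eta$-disjointness; moreover the coverages for $i=1$ and $i=2$ agree only up to a factor $\tfrac{1}{1-\eta}$, so halting when one of them crosses $(1-\eta)\tfrac{1-\tau'}{24}d$ leaves the other possibly below that threshold. The fix is to run the recursion to the end (padding with empty sets after the stopping criterion fires) and then trim: remove elements of the $\cC_{1,k}$ one at a time—each removal changes $|\bigcup_k\sigma(F_k)\cC_{1,k}|$ by at most $|F_\ell|=o(d)$—until this union lies in $[\tfrac{1-\tau'}{24}d,\tfrac{1-\tau'}{24}d+|F_\ell|]$, and set $\cC_{2,k}=\varphi_k(\cC_{1,k})$; the bounds for $i=2$ then follow from those for $i=1$ by $\eta$-disjointness and $|\cC_{1,k}|=|\cC_{2,k}|$.
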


\begin{proof}
Set $\eta'=\frac{1-\tau'}{2}$. Take $\ell$ to be the largest integer satisfying
$\ell \eta\frac{\tau}{16}\le \frac{1-\tau'}{12}$. Then $\ell \eta\frac{\tau}{16}\ge \frac{1-\tau'}{24}$.
We will recursively construct  sets $\cC_{i, 1}', \dots, \cC_{i, \ell}'$ in reverse order
so that (i) for every $i=1, 2$ and $1\le k\le \ell$, the sets $\sigma(F_k)\cC_{i, k}', \dots, \sigma(F_\ell)\cC_{i, \ell}'$
are pairwise disjoint and the family $\bigcup_{n=k}^\ell\{\sigma(F_n)c: c\in \cC'_{i, n}\}$ is $\eta$-disjoint
and $(\ell-k+1)\eta\frac{\tau}{16}$-covers $\{1, \dots, d\}$, and (ii)
for every $k=1, \dots, \ell$, there is a bijection $\varphi_k: \cC_{1, k}'\rightarrow \cC_{2, k}'$ such that
for every $c\in \cC_{1, k}'$ one has
$|\{s\in F_k: \sigma_s(c)\in \cJ_1, \sigma_s(\varphi_k(c))\in \cJ_2\}|\ge (\frac{\tau}{2})^2 |F_k|$.

Taking $\cB_i=\cB$ for $i=1, 2$ in Lemma~\ref{L-bijection} we  find $\cC_{i, \ell}'\subseteq \cB$ for $i=1, 2$
such that the family $\{\sigma(F_\ell)c: c\in \cC_{i, \ell}'\}$ is $\eta$-disjoint and $\eta\frac{\tau}{16}$-covers
$\{1, \dots, d\}$ for $i=1, 2$ and there is a bijection $\varphi_\ell: \cC_{1, \ell}'\rightarrow \cC_{2, \ell}'$ with
$|\{s\in F_\ell: \sigma_s(c)\in \cJ_1, \sigma_s(\varphi_\ell(c))\in \cJ_2\}|\ge (\frac{\tau}{2})^2|F_\ell|$
for all $c\in \cC_{1, \ell}'$.

Suppose that $1\le k<\ell$ and we have found $\cC_{i, k+1}', \dots, \cC_{i, \ell}'\subseteq \cB$ for $i=1, 2$ such that the sets
$\sigma(F_{k+1})\cC_{i, k+1}', \dots, \sigma(F_\ell)\cC_{i,\ell}'$ are pairwise disjoint and
the family $\bigcup_{n=k+1}^\ell\{\sigma(F_n)c: c\in \cC_{i, n}'\}$ is $\eta$-disjoint and $(\ell-k)\eta\frac{\tau}{16}$-covers
$\{1, \dots, d\}$ for each $i=1,2$, and there is a bijection $\varphi_j:\cC_{1, j}'\rightarrow \cC_{2, j}'$
with $|\{s\in F_j: \sigma_s(c)\in \cJ_1, \sigma_s(\varphi_j(c))\in \cJ_2\}|\ge (\frac{\tau}{2})^2|F_j|$ for all $j=k+1, \dots, \ell$ and $c\in \cC_{1, j}'$.
Set $\theta_{i, k}=|\bigcup_{j=k+1}^\ell \sigma(F_j)\cC_{i, j}'|/d$ and
$\cB_{i, k}=\big\{c\in \cB: \sigma(F_k)c\cap \big(\bigcup_{j=k+1}^\ell\sigma(F_j)\cC_{i, j}'\big)=\emptyset\big\}$ for $i=1, 2$.

If $1-\tau'-\eta'< 3\theta_{m, k}$ for some $m=1, 2$, then
we set $\cC_{i, k}'=\emptyset$ for each $i=1, 2$.
Then
\begin{align*}
\sum_{j=k}^\ell|\cC'_{m, j}|\cdot |F_j|\ge \bigg|\bigcup_{j=k}^\ell\sigma(F_j)\cC_{m, j}' \bigg|
=\theta_{m, k}d\ge \frac{1-\tau'-\eta'}{3}d= \frac{1-\tau'}{6}d.
\end{align*}
Since the family $\bigcup_{j=k}^\ell\{\sigma(F_j)c: c\in \cC_{i, j}'\}$ is $\eta$-disjoint, one has
\begin{align*}
\bigg|\bigcup_{j=k}^\ell\sigma(F_j)\cC_{i, j}' \bigg|
&\ge (1-\eta)\sum_{j=k}^\ell|\cC_{i, j}'|\cdot |F_j| \\
&\ge (1-\eta)\frac{1-\tau'}{6}d\ge \frac{1-\tau'}{12}d\ge \ell\eta\frac{\tau}{16}d\ge (\ell-k+1)\eta\frac{\tau}{16}d
\end{align*}
for $i=1, 2$.

Assume that $1-\tau'-\eta' \ge 3\theta_{i, k}$ for every $i=1, 2$. Let $i\in \{1, 2\}$.
For every $c\in \cB\setminus  \cB_{i, k}$ we have $\sigma_s(c)=\sigma_t(a)$ for some
$j\in \{k+1, \dots, \ell \}$, $a\in \cC_{i, j}'$, $t\in F_j$, and $s\in F_k$, and hence
\[
c=\sigma_{s^{-1}}\sigma_s(c)=\sigma_{s^{-1}}\sigma_t(a)=\sigma_{s^{-1}t}(a)\in \bigcup_{j=k+1}^\ell\sigma(F_k^{-1}F_j)\cC_{i, j}'.
\]
Therefore
\[
\cB\setminus \cB_{i, k}\subseteq \bigcup_{j=k+1}^\ell\sigma(F_k^{-1}F_j)\cC_{i, j}'.
\]
Since the family $\bigcup_{j=k+1}^\ell \{\sigma(F_j)c: c\in \cC_{i, j}'\}$ is $\eta$-disjoint we have
\[
\frac{1}{2}\sum_{j=k+1}^\ell |F_j|\cdot |\cC_{i, j}'|\le \sum_{j=k+1}^\ell (1-\eta)|F_j|\cdot |\cC_{i, j}'|
\le \bigg|\bigcup_{j=k+1}^\ell \sigma(F_j)\cC_{i,j}' \bigg|=\theta_{i, k}d.
\]
Thus
\begin{align*}
\bigg|\bigcup_{j=k+1}^\ell \sigma(F_k^{-1}F_j)\cC_{i, j}'\bigg|
&\le \bigg|\bigcup_{j=k+1}^\ell \sigma((F_k^{-1}F_j)\setminus F_j)\cC_{i, j}'\bigg|+\bigg|\bigcup_{j=k+1}^\ell \sigma(F_j)\cC_{i, j}'\bigg|\\
&\le \sum_{j=k+1}^\ell |(F_k^{-1}F_j)\setminus F_j|\cdot |\cC_{i, j}'|+\theta_{i, k}d\\
&\le \sum_{j=k+1}^\ell |(F_{j-1}^{-1}F_j)\setminus F_j|\cdot |\cC_{i, j}'|+\theta_{i, k}d\\
&\le \sum_{j=k+1}^\ell |F_j|\cdot |\cC_{i, j}'|+\theta_{i, k}d\\
&\le 3 \theta_{i, k}d.
\end{align*}
Therefore
\begin{align*}
|\cB_{i, k}|=|\cB|-|\cB\setminus \cB_{i, k}| &\ge (1-\eta')d-\bigg|\bigcup_{j=k+1}^\ell \sigma(F_k^{-1}F_j)\cC_{i, j}'\bigg|\\
&\ge (1-\eta')d-3\theta_{i, k}d \\
&\ge \tau' d.
\end{align*}

Taking $\cB_i=\cB_{i, k}$ in Lemma~\ref{L-bijection}, we find $\cC'_{i, k}\subseteq \cB_{i, k}$ for $i=1, 2$
such that the family $\{\sigma(F_k)c: c\in \cC_{i, k}'\}$ is $\eta$-disjoint and $\eta\frac{\tau}{16}$-covers
$\{1, \dots, d\}$ for $i=1, 2$ and there is a bijection $\varphi_k: \cC_{1, k}'\rightarrow \cC_{2, k}'$ with
$|\{s\in F_k: \sigma_s(c)\in \cJ_1, \sigma_s(\varphi_k(c))\in \cJ_2\}|\ge (\frac{\tau}{2})^2|F_k|$ for all $c\in \cC_{1, k}'$.
Then for each $i=1, 2$,
the sets $\sigma(F_k)\cC_{i, k}', \dots, \sigma(F_\ell)\cC_{i, \ell}'$ are pairwise disjoint,
the family $\bigcup_{j=k}^\ell\{\sigma(F_j)c: c\in \cC_{i, j}'\}$
is $\eta$-disjoint, and
\begin{align*}
\bigg|\bigcup_{j=k}^\ell\sigma(F_j)\cC_{i, j}'\bigg|&=|\sigma(F_k)\cC_{i, k}'|+\bigg|\bigcup_{j=k+1}^\ell\sigma(F_j)\cC_{i, j}'\bigg|\\
&\ge \eta\frac{\tau}{16}d+(\ell-k)\eta\frac{\tau}{16}d=(\ell-k+1)\eta\frac{\tau}{16}d,
\end{align*}
completing the recursive construction.

When $d$ is large enough, take a subset $\cC_{1, k}$ of $\cC_{1, k}'$ for each $1\le k\le \ell$ such that
\[
\frac{1-\tau'}{24}d\le \bigg|\bigcup_{k=1}^\ell\sigma(F_k)\cC_{1, k}\bigg|\le \frac{1-\tau'}{24}d+|F_\ell|\le \frac{1-\tau'}{24}d+\eta(1-\eta)d.
\]
Set $\cC_{2, k}=\varphi_k(\cC_{1, k})$ for each $1\le k\le \ell$.
Since the families $\bigcup_{k=1}^\ell\{\sigma(F_k)c: c\in \cC_{i, k}\}$ for $i=1, 2$ are $\eta$-disjoint, we have
\begin{align*}
\bigg|\bigcup_{k=1}^\ell \sigma(F_k)\cC_{2, k}\bigg|
\ge (1-\eta)\bigg|\bigcup_{k=1}^\ell \sigma(F_k)\cC_{1, k}\bigg|\ge (1-\eta)\frac{1-\tau'}{24}d,
\end{align*}
and
\begin{align*}
\bigg|\bigcup_{k=1}^\ell \sigma(F_k)\cC_{2, k}\bigg|
\le \frac{1}{1-\eta}\bigg|\bigcup_{k=1}^\ell \sigma(F_k)\cC_{1, k}\bigg|\le \bigg(\frac{1}{1-\eta}\cdot\frac{1-\tau'}{24}+\eta\bigg)d.
\end{align*}
\end{proof}

We are ready to prove Theorem~\ref{T-amenable ergodic}.

\begin{proof}[Proof of Theorem~\ref{T-amenable ergodic}]
It suffices to show that for any internal sets $Y=\prod_{\fF}\cY_n$ and $Z=\prod_{\fF}\cZ_n$ with strictly positive measure,
there is some $s\in G'$ with $\mu(Z\cap sY)>0$. In turn it is enough to show that there is some $\lambda>0$ such that
for every finite subset $F$ of $G$ and $\eps>0$ the set of all $n\in \Nb$ for which there is some $\varphi\in \Sym(d_n)$
satisfying $\rho_{\rm Hamm}(\varphi\sigma_s, \sigma_s\varphi)<\eps$ for all $s\in F$
and $\frac{|\varphi(\cY_n)\cap \cZ_n|}{d_n}\ge \lambda$ belongs to $\fF$.

Set $\tau=\min(\mu(Y), \mu(Z))/2$, $\tau'=\frac{\tau}{2}+\frac{2-2\tau}{2-\tau}$, and $\lambda=\frac{\tau^2(1-\tau')}{384}$.
Take $0<\eta<1/2$ to be a small number with $\eta\frac{\tau}{16}<\frac{1-\tau'}{24}$, to be determined in a moment.
Then the set $V$ of all $n\in \Nb$ satisfying $\min(|\cY_n|/d_n, |\cZ_n|/d_n)\ge \tau$ belongs to $\fF$. Let $\ell$ and $\eta'$ be as in Lemma~\ref{L-Rokhlin}.
We may assume that $\eta'<\varepsilon/2$.
Set $\theta=\frac{1}{(1-\eta)^2}\frac{1-\tau'}{24}+\frac{\eta}{1-\eta}+\eta'$. Let $\ell', \kappa$, and $\eta''$ be as  in Lemma~\ref{L-Rokhlin1}.
Take $F'_1\subseteq F'_2\subseteq \dots \subseteq F'_{\ell'}\subseteq F_1\subseteq F_2\subseteq \dots \subseteq F_\ell$
to be finite subsets of $G$ containing $e$ such that
\begin{enumerate}
\item $|((F'_{\ell'})^{-1}F_k)\setminus F_k|<\eta |F_k|$ for all $k=1, \dots, \ell$,

\item $|((F'_{k-1})^{-1}F'_k)\setminus F'_k|<\kappa |F'_k|$ for all $k=2, \dots, \ell'$ and $|\tilde{F}'_k|\ge (1-\eta)|F'_k|$
for all $k=1, \dots, \ell'$, where $\tilde{F}'_k=\{s\in F'_k: Fs\subseteq F'_k\}$, and

\item $|(F_{k-1}^{-1}F_k)\setminus F_k|<|F_k|$ for all $k=2, \dots, \ell$ and $|\tilde{F}_k|\ge (1-\eta)|F_k|$ for all $k=1, \dots, \ell$,
where $\tilde{F}_k=\{s\in F_k: Fs\subseteq F_k\}$.
\end{enumerate}
Then we have $\lambda_1, \dots, \lambda_{\ell'}$ as in Lemma~\ref{L-Rokhlin1}.
When $n\in V$ is large enough, one has $|\cB|\ge (1-\min(\eta', \eta''))d_n$ where
$\cB$ denotes the set of all $a\in \{1, \dots, d_n\}$ satisfying
\[
\sigma_{n,st}(a)=\sigma_{n, s}\sigma_{n, t}(a), \ \sigma_{n, s}(a)\neq \sigma_{n, s'}(a),\ \sigma_{n, e}(a)
\]
for all $s,t\in (F\cup F_\ell)\cup(F\cup F_\ell)^{-1}$ and distinct $s, s'\in F_\ell \cup F_\ell^{-1}$, and one has $\cC_{i, 1}, \dots, \cC_{i, \ell}\subseteq \cB$
for $i=1, 2$ as in Lemma~\ref{L-Rokhlin} for $d=d_n$,  $\sigma=\sigma_n$, $\cJ_1=\cY_n$, and $\cJ_2=\cZ_n$.

Let $i\in \{1, 2\}$. Set $\cV_i=\{c\in \cB: \sigma_n(F'_{\ell'})c \cap \bigcup_{k=1}^\ell\sigma_n(F_k)\cC_{i, k}=\emptyset\}$.
Then $\cB\setminus \cV_i\subseteq \bigcup_{k=1}^\ell \sigma_n((F'_{\ell'})^{-1}F_k)\cC_{i, k}$.
Since the family $\bigcup_{k=1}^\ell\{\sigma_n(F_k)c: c\in \cC_{i, k}\}$  is $\eta$-disjoint, one has
 \begin{align*}
|\cB\setminus \cV_i|&\le \bigg|\bigcup_{k=1}^\ell \sigma_n((F'_{\ell'})^{-1}F_k)\cC_{i, k}\bigg|\\
&\le
\bigg|\bigcup_{k=1}^\ell \sigma_n(((F'_{\ell'})^{-1}F_k)\setminus F_k)\cC_{i, k}\bigg|+\bigg|\bigcup_{k=1}^\ell \sigma_n(F_k)\cC_{i, k}\bigg|\\
&\le \sum_{k=1}^\ell |\sigma_n(((F'_{\ell'})^{-1}F_k)\setminus F_k)\cC_{i, k}|+\bigg|\bigcup_{k=1}^\ell \sigma_n(F_k)\cC_{i, k}\bigg|\\
&\le \sum_{k=1}^\ell |((F'_{\ell'})^{-1}F_k)\setminus F_k|\cdot |\cC_{i, k}|+\bigg|\bigcup_{k=1}^\ell \sigma_n(F_k)\cC_{i, k}\bigg|\\
&\le \eta\sum_{k=1}^\ell |F_k|\cdot |\cC_{i, k}|+\bigg|\bigcup_{k=1}^\ell \sigma_n(F_k)\cC_{i, k}\bigg|\\
&\le \bigg(\frac{\eta}{1-\eta}+1\bigg)\bigg|\bigcup_{k=1}^\ell \sigma_n(F_k)\cC_{i, k}\bigg|\\
&\le \frac{1}{(1-\eta)^2}\frac{1-\tau'}{24}d_n+\frac{\eta}{1-\eta}d_n,
\end{align*}
and thus
\[
|\cV_i|= |\cB|-|\cB\setminus \cV_i|\ge (1-\theta)d_n.
\]
Take $\delta>0$ with $2\eta\delta+2\eta\delta \ell'+2\delta\ell'\le \eta$.
Taking $\cV=\cV_i$ in Lemma~\ref{L-Rokhlin1}, when $n\in V$ is large enough, we find $\cC_{i, 1}', \dots, \cC_{i, \ell'}'\subseteq \cV_i$ such that
\begin{enumerate}
\item for every $k=1, \dots, \ell'$ and $c\in \cC_{i, k}'$, the map $s\mapsto \sigma_{n, s}(c)$ from $F'_k$ to $\sigma_n(F'_k)c$ is bijective,

\item the sets $\sigma_n(F'_1)\cC_{i, 1}', \dots, \sigma_n(F'_{\ell'})\cC_{i, \ell'}'$ are pairwise disjoint
and the family $\bigcup_{k=1}^{\ell'}\{\sigma_n(F'_k)c: c\in \cC_{i, k}'\}$ is $\eta$-disjoint and $(1-\theta-\eta)$-covers $\{1, \dots, d\}$,

\item $\sum_{k=1}^{\ell'}||\sigma_n(F'_k)\cC_{i, k}'|/d_n-\lambda_k|<\delta$.
\end{enumerate}

Note that
\[
\sum_{k=1}^{\ell'}\lambda_k\le \frac{1}{d_n} \bigg|\bigcup_{k=1}\sigma_n(F'_k)\cC_{1, k}' \bigg|+\delta\le 1+\delta.
\]
For each $1\le k\le \ell'$, take $\cC''_{1, k}\subseteq \cC'_{1, k}$ and $\cC''_{2, k}\subseteq \cC'_{2, k}$ with
\[
|\cC''_{1, k}|=|\cC''_{2, k}|=\min(|\cC'_{1, k}|, |\cC'_{2, k}|).
\]
Take a bijection $\varphi'_k: \cC''_{1, k}\rightarrow \cC''_{2, k}$.
We have
\begin{align*}
|\cC'_{1, k}|\cdot |F'_k|-|\cC'_{2, k}|\cdot |F'_k| &\le \frac{1}{1-\eta}|\sigma_n(F'_k)\cC'_{1, k}|-|\sigma_n(F'_k)\cC'_{2, k}|\\
&\le (1+2\eta)|\sigma_n(F'_k)\cC'_{1, k}|-|\sigma_n(F'_k)\cC'_{2, k}|\\
&\le (1+2\eta)(\lambda_k+\delta)d_n-(\lambda_k-\delta)d_n\\
&= (2\eta\lambda_k+2\eta\delta+2\delta)d_n,
\end{align*}
and similarly $|\cC'_{2, k}|\cdot |F'_k|-|\cC'_{1, k}|\cdot |F'_k|\le (2\eta\lambda_k+2\eta\delta+2\delta)d_n$.
Thus  for each $i=1, 2$ one has
\begin{align*}
|\sigma_n(F'_k)(\cC'_{i, k}\setminus \cC''_{i, k})|
&\le |\cC'_{i, k}\setminus \cC''_{i, k}|\cdot |F'_k| \\
&=||\cC'_{1, k}|\cdot |F'_k|-|\cC'_{2, k}|\cdot |F'_k||\le (2\eta\lambda_k+2\eta\delta+2\delta)d_n,
\end{align*}
and hence
\begin{align*}
\bigg|\bigcup_{k=1}^{\ell'}\sigma_n(F'_k)(\cC'_{i, k}\setminus \cC''_{i, k})\bigg|&=\sum_{k=1}^{\ell'}|\sigma_n(F'_k)(\cC'_{i, k}\setminus \cC''_{i, k})|\\
&\le \sum_{k=1}^{\ell'}(2\eta\lambda_k+2\eta\delta+2\delta)d_n\\
&\le (2\eta(1+\delta)+2\eta\delta \ell'+2\delta\ell')d_n\le 3\eta d_n.
\end{align*}
Therefore
\begin{align*}
\bigg|\bigcup_{k=1}^{\ell'}\sigma_n(F'_k)\cC''_{i, k}\bigg|
&\ge \bigg|\bigcup_{k=1}^{\ell'}\sigma_n(F'_k)\cC'_{i, k}\bigg|-\bigg|\bigcup_{k=1}^{\ell'}\sigma_n(F'_k)(\cC'_{i, k}\setminus \cC''_{i, k})\bigg| \\
&\ge (1-\theta-\eta)d_n-3\eta d_n=(1-\theta-4\eta)d_n
\end{align*}
for $i=1, 2$.

Since the families $\bigcup_{k=1}^\ell\{\sigma_n(F_k)c: c\in \cC_{i, k}\}$ for $i=1, 2$ are $\eta$-disjoint,
we can find $F_{i, c}\subseteq F_k$ with $|F_{i, c}|\ge (1-\eta)|F_k|$ for all $i=1, 2$, $k=1,\dots, \ell$, and $c\in \cC_{i, k}$
so that for each $i=1, 2$, the sets $\sigma_n(F_{i, c})c$ for $c\in \bigcup_{k=1}^\ell \cC_{i, k}$ are pairwise disjoint.
For every $k=1,\dots,\ell$ and $c\in \cC_{1, k}$, set $\bar{F}_c=F_{1, c}\cap F_{2, \varphi_k(c)}$
and $\hat{F}_c=\{s\in \bar{F}_c: Fs\subseteq \bar{F}_c\}$. Then $|\bar{F}_c|\ge (1-2\eta)|F_k|$
and $|\hat{F}_c|\ge |\tilde{F}_k|-2\eta |F|\cdot |F_k|\ge (1-(2|F|+1)\eta)|F_k|$.

Similarly, for every $1\le k\le \ell'$ and $c\in \cC''_{1, k}$, we find some $\tilde{F}'_c\subseteq F'_k$ with $|\tilde{F}'_c|\ge (1-2\eta)|F'_k|$
such that the sets $\sigma_n(\tilde{F}'_c)c$  for $c\in \bigcup_{k=1}^\ell \cC''_{1, k}$, as well as the sets
$\sigma_n(\tilde{F}'_c)\varphi'_k(c)$ for $c\in \bigcup_{k=1}^\ell \cC''_{1, k}$, are pairwise disjoint.
Setting $\hat{F}'_c=\{s\in \bar{F}'_c: Fs\subseteq \bar{F}'_c\}$, we have $|\hat{F}'_c|\ge (1-(2|F|+1)\eta)|F'_k|$.

Note that
\begin{align*}
\bigg|\bigcup_{k=1}^\ell\bigcup_{c\in \cC_{1, k}}\sigma_n(\hat{F}_c)c\bigg|&=\sum_{k=1}^\ell\sum_{c\in \cC_{1, k}}|\hat{F}_c|\\
&\ge \sum_{k=1}^\ell\sum_{c\in \cC_{1, k}}\big(1-(2|F|+1)\eta\big)|F_k|\\
&\ge \big(1-(2|F|+1)\eta\big)\bigg|\bigcup_{k=1}^\ell\sigma_n(F_k)\cC_{1, k}\bigg|\\
&\ge \big(1-(2|F|+1)\eta\big)(1-\eta)\frac{1-\tau'}{24}d_n.
\end{align*}
Similarly,
\begin{align*}
\bigg|\bigcup_{k=1}^{\ell'}\bigcup_{c\in \cC''_{1, k}}\sigma_n(\hat{F}'_c)c \bigg|
&\ge \big(1-(2|F|+1)\eta\big)\bigg|\bigcup_{k=1}^{\ell'}\sigma_n(F'_k)\cC''_{1, k}\bigg|\\
&\ge \big(1-(2|F|+1)\eta\big)(1-\theta-4\eta)d_n.
\end{align*}
Set $\cW=\big(\bigcup_{k=1}^\ell\bigcup_{c\in \cC_{1, k}}\sigma_n(\hat{F}_c)c\big)\cup\big(\bigcup_{k=1}^{\ell'}\bigcup_{c\in \cC''_{1, k}}\sigma_n(\hat{F}'_c)c\big)$.
Then
\[
|\cW|\ge \big(1-(2|F|+1)\eta\big)(1-\eta)\frac{1-\tau'}{24}d_n+\big(1-(2|F|+1)\eta\big)(1-\theta-4\eta)d_n.
\]

Take a $\varphi\in \Sym(d_n)$ such that $\varphi(\sigma_{n, s}(c))=\sigma_{n, s}(\varphi_k(c))$ for all $k=1,\dots,\ell$, $c\in \cC_{1, k}$, and $s\in \bar{F}_c$, and
$\varphi(\sigma_{n, s}(c))=\sigma_{n, s}(\varphi'_k(c))$ for all $k=1,\dots,\ell'$, $c\in \cC''_{1, k}$, and $s\in \bar{F}'_c$. For every $s\in F$, note that
$\sigma_{n, s}\varphi=\varphi\sigma_{n, s}$ on $\cW$, and hence
\begin{align*}
\rho_{\rm Hamm}(\sigma_{n, s}\varphi, \varphi\sigma_{n, s})
&\le 1-\frac{|\cW|}{d_n} \\
&\le 1-\big(1-(2|F|+1)\eta\big)(1-\eta)\frac{1-\tau'}{24}-\big(1-(2|F|+1)\eta\big)(1-\theta-4\eta)<\eps
\end{align*}
when $\eta$ is small enough. We also have
\begin{align*}
|\varphi(\cY_n)\cap \cZ_n|&\ge \sum_{k=1}^\ell\sum_{c\in \cC_{1, k}}|\{s\in \bar{F}_c:\sigma_{n, s}(c)\in \cY_n, \sigma_{n, s}(\varphi_k(c))\in \cZ_n\}|\\
&\ge\sum_{k=1}^\ell\sum_{c\in \cC_{1, k}}(|\{s\in F_k:\sigma_{n, s}(c)\in \cY_n, \sigma_{n, s}(\varphi_k(c))\in \cZ_n\}|-2\eta|F_k|)\\
&\ge \sum_{k=1}^\ell\sum_{c\in \cC_{1, k}}\bigg(\bigg(\frac{\tau}{2}\bigg)^2|F_k|-2\eta|F_k|\bigg)\\
&\ge \sum_{k=1}^\ell\sum_{c\in \cC_{1, k}}\frac{\tau^2}{8}|F_k|
\ge \frac{\tau^2}{8}\bigg|\bigcup_{k=1}^\ell\sigma_n(F_k)\cC_{1, k}\bigg|\\
&\ge \frac{\tau^2}{8}(1-\eta)\frac{1-\tau'}{24} d_n
\ge \frac{\tau^2}{16}\frac{1-\tau'}{24}d_n=\lambda d_n
\end{align*}
when $\eta$ is small enough.
\end{proof}

\begin{question}
Does every countable sofic group $G$ admit a sofic approximation sequence $\Sigma$ such that
the action of $G'$ on $(\prod_\fF\{1, \dots, d_i\}, \fB, \mu)$ is ergodic?
\end{question}

\section{IE-tuples and algebraic actions}\label{S-algebraic}

By an {\it algebraic action} we mean an action of a countable discrete group $G$ on a compact
metrizable Abelian group $X$ by (continuous) automorphisms. The structure of such an action is
captured by the Pontryagin dual $\widehat{X}$ viewed as a module over the
integral group ring $\Zb G$. The ring $\Zb G$ consists of the finitely supported $\Zb$-valued
functions on $G$, which we write in the form $\sum_{s\in G} f_s s$,
with addition $(\sum_{s\in G} f_s s )+(\sum_{s\in G} g_s s) = \sum_{s\in G} (f_s + g_s)s$
and multiplication $(\sum_{s\in G} f_s s )(\sum_{s\in G} g_s s) = \sum_{s\in G} (\sum_{t\in G} f_t g_{t^{-1} s} )s$.

Given a matrix $A$ in $M_n(\Zb G)$, the left action of $G$ on
$(\Zb G)^n/(\Zb G)^n A$ gives rise via Pontryagin duality to the algebraic action
$G\curvearrowright X_A := \widehat{(\Zb G)^n/(\Zb G)^nA}$.
Write $A^*$ for the matrix in $M_n(\Zb G)$ whose $(i,j)$ entry is the result of applying the involution
$(\sum_{s\in G} f_s s )^* = \sum_{s\in G} f_s s^{-1}$ to the $(j,i)$ entry of  $A$.
Viewing $\widehat{(\Zb G)^n}$ as $((\Rb /\Zb )^G )^n$, we can then identify $X_A$ with
the closed $G$-invariant subset
\[
\big\{ x\in ((\Rb /\Zb )^G )^n : xA^* = 0_{((\Rb /\Zb )^G)^n} \big\}
\]
of $((\Rb /\Zb )^G )^n$ equipped with the action of $G$ by left translation.
In the case that $A$ is invertible in $M_n(\ell^1 (G))$ the action $G\curvearrowright X_A$ is expansive,
and in fact such actions and their restrictions to closed $G$-invariant subgroups
constitute precisely all of the expansive algebraic actions \cite[Thm.\ 3.1]{ChuLi11}.
When $G$ is amenable, given an action of the form
$G\curvearrowright X_A$ with $A$ invertible in $M_n (\ell^1 (G))$, every
tuple of points in $X$ is an IE-tuple (see Lemma~5.4 and Theorems~7.3 and 7.8 in \cite{ChuLi11}).
We will extend this result in two ways in Theorems~\ref{T-invertible to dense} and
\ref{T-invertible to UPE}, which demonstrate that
in broader contexts independent behaviour similarly saturates the structure
of actions of the form $G\curvearrowright X_A$ with $A$ invertible in $M_n(\ell^1 (G))$.

First however we examine orbit IE-tuples in the context of actions $G\curvearrowright X$
on a compact metrizable (not necessarily  Abelian) group by automorphisms.
It was shown in \cite[Theorem 7.3]{ChuLi11} that, when $G$ is amenable, the IE-tuples for such an action
are determined by a closed $G$-invariant normal subgroup of $X$ called the {\it IE group}.
We now proceed to record some observations showing that the basic theory of the IE-group
from \cite{ChuLi11} can be extended from amenable $G$ to general $G$ using orbit IE-tuples.
Thus $G$ will be an arbitrary countable discrete group until we turn to the sofic
setting in Theorem~\ref{T-invertible to UPE}.

The proof of Lemma~3.11 in \cite{KerLi07} shows the following:

\begin{lemma}\label{L-measure to density}
Let $G$ act continuously on a compact metrizable space $X$.
Let $A$ be a Borel subset of $X$, and $\mu$ a $G$-invariant Borel probability measure on $X$.
Then $A$ has independence density at least $\mu(A)$ over $G$.
\end{lemma}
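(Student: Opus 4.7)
The plan is to exploit the invariance of $\mu$ via a simple first-moment (averaging) argument. Recall that for a $1$-tuple $(A)$, a subset $J$ of $G$ is an independence set precisely when $\bigcap_{s\in J'} s^{-1}A \neq \emptyset$ for every finite $J'\subseteq J$. Thus to establish the independence density bound it suffices, for a given finite $F\subseteq G$, to produce a single point $x\in X$ together with a subset $J\subseteq F$ of cardinality at least $\mu(A)|F|$ such that $x\in \bigcap_{s\in J} s^{-1}A$; any such $J$ is automatically an independence set.

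The key step is to integrate the function $x\mapsto \sum_{s\in F} 1_{s^{-1}A}(x)$ against $\mu$. By the $G$-invariance of $\mu$ we have $\mu(s^{-1}A)=\mu(A)$ for each $s\in G$, so
\[
\int_X \sum_{s\in F} 1_{s^{-1}A}(x)\, d\mu(x) = \sum_{s\in F} \mu(s^{-1}A) = |F|\mu(A).
\]
Since the integrand is nonnegative and has mean $|F|\mu(A)$, there exists a point $x_0\in X$ at which it attains a value at least $|F|\mu(A)$. Setting $J=\{s\in F : sx_0 \in A\}$, we obtain $|J|\geq \mu(A)|F|$ and $x_0 \in \bigcap_{s\in J} s^{-1}A$.

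Because this construction yields such a $J$ for every finite $F\subseteq G$, Definition~\ref{D-independence density} gives that the independence density of $A$ over $G$ is at least $\mu(A)$, completing the argument. There is no substantive obstacle here; the proof is essentially a one-line averaging computation, and the only minor subtlety is unwinding the definition of independence set in the $k=1$ case, where the nonemptiness of arbitrary finite intersections $\bigcap_{s\in J'} s^{-1}A$ is already witnessed by the single point $x_0$ produced above.
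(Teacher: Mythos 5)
Your proof is correct, and it is essentially the argument the paper invokes: the paper gives no proof here but cites Lemma~3.11 of \cite{KerLi07}, whose proof is exactly this first-moment computation $\int_X \sum_{s\in F} 1_{s^{-1}A}\, d\mu = |F|\mu(A)$ followed by choosing a point where the sum meets its mean. Your observation that for a $1$-tuple the single witness point $x_0$ already verifies all finite sub-intersections is the right way to close the argument.
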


From Lemma~\ref{L-measure to density} we immediately obtain:

\begin{lemma} \label{L-support to orbit}
Let $G$ act continuously on a compact metrizable space $X$. Let $\mu$ be a $G$-invariant Borel probability measure on $X$.
Then every point in the support of $\mu$ is an orbit $\IE$-$1$-tuple.
\end{lemma}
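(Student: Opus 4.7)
The plan is to deduce this immediately from Lemma~\ref{L-measure to density}. Let $x\in\supp(\mu)$ and let $U$ be any (open) neighbourhood of $x$ in $X$. To show that $x$ is an orbit IE-$1$-tuple, I need to verify that the $1$-tuple $(U)$ has positive independence density over $G$.

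The key observation is that, by definition of the support, every open neighbourhood of a point $x\in\supp(\mu)$ has strictly positive $\mu$-measure; in particular $\mu(U)>0$. Lemma~\ref{L-measure to density} then says that the (Borel) set $U$, viewed as a $1$-tuple, has independence density at least $\mu(U)>0$ over $G$. Since this holds for every neighbourhood $U$ of $x$, the point $x$ is an orbit IE-$1$-tuple, as required.

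There is essentially no obstacle here: the lemma is a one-line application of Lemma~\ref{L-measure to density} together with the characterisation of $\supp(\mu)$ as the set of points each of whose open neighbourhoods has positive measure. The only thing worth noting is that we do not even need to shrink $U$ to an open set, since any neighbourhood of $x\in\supp(\mu)$ contains an open neighbourhood of $x$ of positive measure and hence itself has positive measure, so Lemma~\ref{L-measure to density} applies directly.
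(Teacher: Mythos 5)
Your proof is correct and is exactly the paper's argument: the paper states that Lemma~\ref{L-support to orbit} follows immediately from Lemma~\ref{L-measure to density}, and your write-up simply spells out the (one-line) deduction via the positivity of $\mu(U)$ for neighbourhoods $U$ of points in $\supp(\mu)$.
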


We now suppose that $G$ acts continuously on a compact metrizable group $X$ by automorphisms.
From Lemma~\ref{L-support to orbit} we have:

\begin{lemma} \label{L-orbit 1 tuple}
Every point of $X$ is an orbit $\IE$-$1$-tuple.
\end{lemma}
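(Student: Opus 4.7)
The plan is to apply the preceding Lemma~\ref{L-support to orbit} with $\mu$ equal to the normalized Haar measure $\mu_X$ on $X$, so the only things to check are that $\mu_X$ is $G$-invariant and that its support is all of $X$.

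First I would verify $G$-invariance. For each $s\in G$ the transformation $\alpha_s : X\to X$ is a continuous group automorphism, so the pushforward $(\alpha_s)_* \mu_X$ is again a Borel probability measure on $X$ which is invariant under left translation by elements of $X$: indeed, for any Borel $A\subseteq X$ and $x\in X$,
\[
((\alpha_s)_*\mu_X)(xA) = \mu_X(\alpha_s^{-1}(xA)) = \mu_X(\alpha_s^{-1}(x)\alpha_s^{-1}(A)) = \mu_X(\alpha_s^{-1}(A)) = ((\alpha_s)_*\mu_X)(A).
\]
By uniqueness of normalized Haar measure on a compact group, $(\alpha_s)_* \mu_X = \mu_X$, so $\mu_X$ is $G$-invariant.

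Next I would note that $\mu_X$ assigns positive measure to every nonempty open subset of $X$ (a standard fact for Haar measure on a compact group, following from translation invariance and compactness), so $\supp(\mu_X) = X$. Lemma~\ref{L-support to orbit} then says that every $x\in X = \supp(\mu_X)$ is an orbit $\IE$-$1$-tuple, which is exactly the desired conclusion.

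There is no real obstacle here; the statement is essentially a one-line consequence of the previous lemma together with the standard properties of Haar measure. The only thing worth being slightly careful about is that the $G$-action is by automorphisms (not just by translations), which is precisely what forces $\mu_X$ to be $G$-invariant via the uniqueness argument above.
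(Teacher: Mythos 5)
Your proposal is correct and is exactly the argument the paper intends: the lemma is stated as an immediate consequence of Lemma~\ref{L-support to orbit} applied to the normalized Haar measure, which is $G$-invariant because the action is by automorphisms and has full support. Your write-up simply makes explicit the routine verifications that the paper leaves implicit.
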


Denote by $\IE(X)$ the set of all $x\in X$ such that $(x, e_X)$ is an orbit $\IE$-pair, where $e_X$ is the identity
element of $X$. The proof of Theorem~7.3 in \cite{ChuLi11} shows the following.

\begin{theorem} \label{T-IE group}
$\IE(X)$ is a closed $G$-invariant normal subgroup of $X$. For every $k\in \Nb$ the set $\IE_k(X, G)$
of all orbit IE-$k$-tuples is a closed $G$-invariant subgroup of the group $X^k$ and
\begin{align*}
\IE_k(X, G)&=\{(x_1y, \dots, x_ky): x_1, \dots, x_k\in \IE(X),\ y\in X\}\\
&=\{(yx_1, \dots, yx_k): x_1, \dots, x_k\in \IE(X),\ y\in X\}.
\end{align*}
\end{theorem}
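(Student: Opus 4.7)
The plan follows the approach of \cite[Theorem 7.3]{ChuLi11}, replacing the amenability-specific arguments there with the tools now available for general $G$: Theorem~\ref{T-product for orbit IE}, Lemma~\ref{L-orbit 1 tuple}, and the observation that because $G$ acts by automorphisms the multiplication $m\colon X\times X\to X$, the inversion $\iota\colon X\to X$, the diagonal embeddings, and the coordinate projections $X^k\to X^2$ are all $G$-equivariant continuous maps. Orbit IE-tuples are preserved under pushforward by any continuous $G$-equivariant surjection $\pi$, since $\bigcap_{s\in J}s^{-1}\pi^{-1}(V_{\omega(s)})=\pi^{-1}\bigl(\bigcap_{s\in J}s^{-1}V_{\omega(s)}\bigr)$ and surjectivity preserves nonemptiness, so the independence sets for a preimage tuple coincide with those for the image tuple.

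First I would check that $\IE_k(X,G)$ is closed (positive independence density of a product neighborhood is inherited by any point inside it) and $G$-invariant (if $J$ is an independence set for $(s^{-1}U_1,\dots,s^{-1}U_k)$ then $sJ$, of the same size, is one for $(U_1,\dots,U_k)$). To see it is a subgroup of $X^k$: given $(x_1,\dots,x_k),(y_1,\dots,y_k)\in\IE_k(X,G)$, Theorem~\ref{T-product for orbit IE} places $((x_1,y_1),\dots,(x_k,y_k))$ in $\IE_k(X\times X,G)$, and pushing forward componentwise along $m$ yields $(x_1 y_1,\dots,x_k y_k)\in\IE_k(X,G)$; closure under inverses is analogous via $\iota$. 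Next, the diagonal $\{(y,\dots,y):y\in X\}$ lies in $\IE_k(X,G)$: for any product neighborhood $U_1\times\cdots\times U_k$ of $(y,\dots,y)$, apply Lemma~\ref{L-orbit 1 tuple} to $U=\bigcap_i U_i$ to obtain positive-density sets $J\subseteq G$ with $\bigcap_{s\in J}s^{-1}U\neq\emptyset$, and any such $J$ is automatically an independence set for $(U_1,\dots,U_k)$ since $\bigcap_{s\in J}s^{-1}U_{\omega(s)}\supseteq\bigcap_{s\in J}s^{-1}U$ for every $\omega$. Normality of $\IE(X)$ then drops out: for $g\in X$ and $x\in\IE(X)$, the identity $(gxg^{-1},e_X)=(g,g)(x,e_X)(g^{-1},g^{-1})$ in $X^2$ places the left side in $\IE_2(X,G)$.

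The crux of the argument, and the step requiring the most care, is a single-coordinate extension lemma: for every $x\in\IE(X)$ and every $1\le j\le k$, the tuple with $x$ in slot $j$ and $e_X$ in every other slot lies in $\IE_k(X,G)$. Given a product neighborhood $U_1\times\cdots\times U_k$ of this point, set $V=\bigcap_{i\ne j}U_i$, a neighborhood of $e_X$. Since $(x,e_X)\in\IE_2(X,G)$, the pair $(U_j,V)$ has some positive independence density $q$, so every finite $F\subseteq G$ contains an independence set $J$ for $(U_j,V)$ with $|J|\ge q|F|$. For any $\omega'\colon J\to\{1,\dots,k\}$, define $\omega\colon J\to\{1,2\}$ by $\omega(s)=1$ on $(\omega')^{-1}(j)$ and $\omega(s)=2$ elsewhere; because $V\subseteq U_i$ for $i\ne j$, the nonempty intersection witnessing $(U_j,V)$-independence of $J$ at $\omega$ is contained in $\bigcap_{s\in J}s^{-1}U_{\omega'(s)}$, which is therefore also nonempty. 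Hence $J$ is an independence set for $(U_1,\dots,U_k)$ of size at least $q|F|$, proving the lemma.

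Combining the extension lemma with the subgroup property yields, for any $x_1,\dots,x_k\in\IE(X)$, the membership $(x_1,\dots,x_k)\in\IE_k(X,G)$ by writing the tuple as the componentwise product in $X^k$ of the $k$ single-coordinate tuples provided by the lemma; multiplying by the diagonal $(y,\dots,y)\in\IE_k(X,G)$ then gives $(x_1 y,\dots,x_k y)\in\IE_k(X,G)$, establishing the $\supseteq$ inclusion of the characterization (the left-translation form follows from normality of $\IE(X)$ via $x_i y=y(y^{-1}x_iy)$). For the $\subseteq$ inclusion, given $(x_1,\dots,x_k)\in\IE_k(X,G)$, multiply on the right by the diagonal $(x_k^{-1},\dots,x_k^{-1})\in\IE_k(X,G)$ to obtain $(x_1 x_k^{-1},\dots,x_{k-1}x_k^{-1},e_X)\in\IE_k(X,G)$, project onto the coordinates $(i,k)$ to read off $x_i x_k^{-1}\in\IE(X)$ for each $i<k$, and write $(x_1,\dots,x_k)=((x_1 x_k^{-1})x_k,\dots,(x_{k-1}x_k^{-1})x_k,e_X\cdot x_k)$ in the claimed form with $y=x_k$.
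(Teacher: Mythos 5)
Your proposal is correct and follows essentially the same route as the paper, which simply defers to the proof of Theorem~7.3 in \cite{ChuLi11}: you faithfully reconstruct that argument, substituting Theorem~\ref{T-product for orbit IE} and Lemma~\ref{L-orbit 1 tuple} for their amenable-case counterparts and supplying the single-coordinate extension lemma that makes the decomposition work. The only step phrased slightly loosely is the passage from the $k$-tuple $(x_1x_k^{-1},\dots,x_{k-1}x_k^{-1},e_X)$ to the pair $(x_ix_k^{-1},e_X)$, which is the elementary sub-tuple property (enlarge the omitted neighbourhoods to $X$) rather than an instance of your coordinatewise pushforward principle, but it is immediate from the definition.
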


Now we suppose that $X$ is Abelian. In this case a point $x\in X$ is said to be
{\it $1$-homoclinic} if the function $s\mapsto \varphi (sx) - 1$ on $G$ lies
in $\ell^1 (G)$ for every $\varphi$ in the Pontryagin dual $\widehat{X}$.
The set of $1$-homoclinic points is written $\Delta^1(X)$. This set was studied in
\cite{LSV, LSV12,SV} in the case $G=\Zb^d$ and in \cite{ChuLi11} for more
general $G$.
From the proof of Theorem~7.8 in \cite{ChuLi11} we obtain the following.

\begin{theorem}\label{T-homoclinic to IE}
Suppose that  $\widehat{X}$ is a finitely generated left $\Zb G$-module.
Then $\Delta^1(X)\subseteq \IE(X)$.
\end{theorem}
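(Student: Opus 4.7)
The plan is to fix $x\in \Delta^1(X)$ and a product neighborhood $U\times V$ of $(x,e_X)$, and to produce, for every finite $F\subseteq G$, an independence set $J\subseteq F$ for $(U,V)$ with $|J|\ge q|F|$ for some $q=q(U,V)>0$. The independence witnesses will be built explicitly as sums in the abelian group $X$, and the errors will be controlled by the $\ell^1$-decay coming from $1$-homoclinicity.

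Using the finite-generation of $\widehat{X}$, first pick generators $\varphi_1,\dots,\varphi_n\in\widehat{X}$ and an enumeration $s_1,s_2,\dots$ of $G$, and define a translation-invariant compatible metric $\rho$ on $X$ by
\[
\rho(a,b)=\sum_{i=1}^n\sum_{k=1}^\infty 2^{-k}\, d_{\Tb}(\varphi_i(s_ka),\varphi_i(s_kb)),
\]
where $d_{\Tb}$ is the standard metric on $\Tb=\Rb/\Zb$. Translation invariance holds because characters are homomorphisms, and $\rho$ separates points because the $G$-translates of $\varphi_1,\dots,\varphi_n$ already generate $\widehat{X}$ as an abelian group. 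Since $x\in\Delta^1(X)$, each function $s\mapsto\varphi_i(sx)-1$ lies in $\ell^1(G)$, so reindexing and summing against the weights $2^{-k}$ yields $\sum_{g\in G}\rho(e_X,gx)<\infty$. Now choose $\eps>0$ small enough that the $\rho$-$\eps$-balls around $x$ and $e_X$ lie in $U$ and $V$ respectively, and then a finite symmetric set $K\subseteq G$ containing $e$ with $\sum_{g\in G\setminus K}\rho(e_X,gx)<\eps$.

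Given any finite $F\subseteq G$, a greedy selection produces $J\subseteq F$ with $|J|\ge |F|/|K|$ such that $ts^{-1}\notin K$ for all distinct $s,t\in J$. For each $\omega:J\to\{1,2\}$, set $J_\omega=\omega^{-1}(1)$ and
\[
y_\omega=\sum_{s\in J_\omega}s^{-1}x\in X,
\]
a finite sum in $X$. Because $G$ acts by automorphisms, for every $t\in J$ we have $ty_\omega=z_t+E_t$ where $z_t=x$ if $\omega(t)=1$ and $z_t=e_X$ if $\omega(t)=2$, and $E_t=\sum_{s\in J_\omega\setminus\{t\}}ts^{-1}x$. By translation invariance of $\rho$,
\[
\rho(z_t,ty_\omega)=\rho(e_X,E_t)\le \sum_{s\in J_\omega\setminus\{t\}}\rho(e_X,ts^{-1}x)\le \sum_{g\in G\setminus K}\rho(e_X,gx)<\eps,
\]
where the second inequality uses that $s\mapsto ts^{-1}$ is injective on $J_\omega\setminus\{t\}$ and, by $K$-separation of $J$, takes values in $G\setminus K$. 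Hence $ty_\omega\in U$ when $\omega(t)=1$ and $ty_\omega\in V$ when $\omega(t)=2$, so $y_\omega\in \bigcap_{s\in J}s^{-1}A_{\omega(s)}$ with $A_1=U$, $A_2=V$. Thus $J$ is an independence set for $(U,V)$, giving independence density at least $1/|K|>0$, so $(x,e_X)$ is an orbit IE-pair and $x\in\IE(X)$.

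The only delicate step in the above plan is producing a translation-invariant compatible metric $\rho$ for which the $1$-homoclinic hypothesis translates into summability of $\rho(e_X,gx)$ over $g\in G$; this is where the finite-generation hypothesis on $\widehat{X}$ is essential, since it lets a single $\ell^1$-estimate across finitely many generating characters dominate $\rho$.
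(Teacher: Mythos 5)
Your argument is correct, and it is essentially the same construction that the paper obtains by citing the proof of Theorem~7.8 in \cite{ChuLi11}: one realizes independence witnesses as finite sums $\sum_{s\in J_\omega}s^{-1}x$ over a $K$-separated subset $J$ of a given finite set $F$, and uses the $\ell^1$-decay of the finitely many generating characters (packaged into a translation-invariant compatible metric) to control the error terms, yielding density at least $1/|K|$. The only difference is presentational: the paper defers entirely to the cited proof, whereas you have written out a self-contained version adapted directly to the orbit-IE (arbitrary finite $F$) formulation of independence density.
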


From Theorem~\ref{T-homoclinic to IE}  and \cite[Lemma 5.4]{ChuLi11} we obtain:

\begin{theorem}\label{T-invertible to dense}
Let $n\in\Nb$, and let $A$ be an element of $M_n(\Zb G)$ which is invertible in $M_n(\ell^1(G))$.
Then for the action $G\curvearrowright X_A$ one has $\IE(X_A)=X_A$.
\end{theorem}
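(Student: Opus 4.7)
The plan is to combine Theorem~\ref{T-homoclinic to IE} (homoclinic points are in the IE group) with the density of $\Delta^1(X_A)$ in $X_A$ and the closedness of $\IE(X_A)$ as a subgroup. The inclusion $\IE(X_A)\subseteq X_A$ is of course trivial, so the whole task is to establish $X_A\subseteq \IE(X_A)$.

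First I would verify the hypothesis of Theorem~\ref{T-homoclinic to IE} for $X=X_A$. By construction, $\widehat{X_A}=(\Zb G)^n/(\Zb G)^nA$ is generated as a left $\Zb G$-module by the (cosets of the) $n$ standard basis vectors, so it is finitely generated. Consequently
\[
\Delta^1(X_A)\subseteq \IE(X_A).
\]

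Next I would invoke \cite[Lemma 5.4]{ChuLi11}, which asserts that when $A$ is invertible in $M_n(\ell^1(G))$, the $1$-homoclinic group $\Delta^1(X_A)$ is dense in $X_A$. (The key point is that the rows of $A^{-1}\in M_n(\ell^1(G))$ mod $\Zb$ give explicit $1$-homoclinic points whose translates under $G$ separate the characters in $(\Zb G)^n/(\Zb G)^nA$, yielding density by Pontryagin duality.)

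Finally, by Theorem~\ref{T-IE group}, $\IE(X_A)$ is a \emph{closed} normal subgroup of $X_A$. Combining the previous two steps, $\IE(X_A)$ is a closed subgroup containing the dense subgroup $\Delta^1(X_A)$, so $\IE(X_A)=X_A$. The only real content beyond bookkeeping lies in the two cited inputs (Theorem~\ref{T-homoclinic to IE} and \cite[Lemma 5.4]{ChuLi11}), and I do not anticipate any obstacle in assembling them here; the proof should be a one-line deduction.
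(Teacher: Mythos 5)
Your proof is correct and is exactly the paper's argument: the paper derives this theorem directly from Theorem~\ref{T-homoclinic to IE} (applicable since $\widehat{X_A}=(\Zb G)^n/(\Zb G)^nA$ is finitely generated) together with the density of $\Delta^1(X_A)$ from Lemma~5.4 of \cite{ChuLi11} and the closedness of $\IE(X_A)$ from Theorem~\ref{T-IE group}.
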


Now we let $G$ be a countable sofic group and
$\Sigma=\{\sigma_i: G\rightarrow \Sym(d_i)\}_{i=1}^\infty$ a sofic approximation sequence for $G$.

\begin{theorem}\label{T-invertible to UPE}
Let $n \in \Nb$, and let $A$ be an element of $M_n(\Zb G)$ which is invertible in $M_n(\ell^1(G))$.
Consider the action $G\curvearrowright X_A$. Then,
for each $k\in \Nb$, every $k$-tuple of points in $X_A$ is a $\Sigma$-$\IE$-tuple.
\end{theorem}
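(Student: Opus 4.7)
The plan is to construct directly, for any $k$-tuple $(x_1, \dots, x_k) \in X_A^k$ and any product neighborhood $U_1 \times \cdots \times U_k$, a positive lower bound on the upper independence density of $(U_1, \dots, U_k)$ over $\Sigma$. The construction exploits the density of the $1$-homoclinic subgroup $\Delta^1(X_A)$ in $X_A$ (Lemma~5.4 of \cite{ChuLi11}) together with a direct sofic-approximation superposition of homoclinic contributions placed at vertices of a greedily-chosen sparse subset.

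Fix a compatible translation-invariant metric $\rho$ on $X_A$. First I would replace each $x_j$ by a nearby homoclinic point $\tilde{x}_j \in \Delta^1(X_A) \cap U_j$ and shrink $U_j$ to a small $\rho$-ball around $\tilde{x}_j$; this is possible by density of $\Delta^1(X_A)$. Each $\tilde{x}_j$ arises as $\pi(g_j B)$ with $g_j \in (\Zb G)^n$, $B = A^{-1} \in M_n(\ell^1(G))$, and $\pi : \Rb^{nG} \to \Tb^{nG}$ the coordinate-wise projection. Given $F \subseteq G$ finite, $\delta > 0$, and small $\epsilon > 0$ (to be specified in terms of $\delta$ and $|F|$), choose a large finite $K \subseteq G$ containing $F \cup F^{-1} \cup \{e\}$ such that $\sum_{s \in G \setminus K} \|(g_j B)_s\|_\infty < \epsilon$ for all $j$, using $1$-homoclinicity of the $\tilde{x}_j$.

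For a sufficiently good sofic approximation $\sigma = \sigma_i : G \to \Sym(d_i)$ relative to $KFK$, a greedy selection yields $\cJ \subseteq \{1, \dots, d_i\}$ of density at least $q := 1/(2|KFK|)$ such that the sets $\sigma(K) a$ for $a \in \cJ$ are pairwise disjoint and the map $s \mapsto \sigma(s) a$ is injective on $KFK$ for each $a \in \cJ$. For each $\omega : \cJ \to \{1, \dots, k\}$ I would define
\[
\varphi_\omega(b) \;=\; \sum_{a \in \cJ \,:\, b \in \sigma(K) a} t_{a,b} \cdot \tilde{x}_{\omega(a)},
\]
where $t_{a,b} \in K$ is the unique element with $\sigma(t_{a,b}) a = b$. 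The sum has at most one nonzero term by disjointness and lies in $X_A$ since each summand does. In particular $\varphi_\omega(a) = \tilde{x}_{\omega(a)} \in U_{\omega(a)}$ for $a \in \cJ$. The approximate $F$-equivariance $\rho_2(\varphi_\omega \sigma_s, s\varphi_\omega) < \delta$ follows because for $b = \sigma(t_{a,b}) a$ with $t_{a,b} \in K$ and $s \in F$, sofic compatibility on $KFK$ gives $\sigma(s) b = \sigma(st_{a,b}) a$, so $\varphi_\omega(\sigma(s) b) = (s t_{a,b}) \cdot \tilde{x}_{\omega(a)} = s \cdot \varphi_\omega(b)$, while the exceptional $b$ (outside every $\sigma(K)a$, or where sofic compatibility fails, or where $s t_{a,b} \notin K$) form a set of density controlled by $\epsilon$ plus the sofic defect.

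The main obstacle is the careful accounting of three error sources simultaneously: (i) the sofic approximation defect on $KFK$; (ii) the boundary points $b \in \sigma(K)a$ for which $s t_{a,b}$ escapes $K$, which would contribute a nontrivial error to $\rho_2$; and (iii) the fact that $\tilde{x}_j$, though homoclinic, still carries small nonzero tails beyond $K$ which enter the $\rho_2$-error because the constant values on $\sigma(K)a$-cells are not exact equivariants but only approximate ones. All three are absorbed by choosing $K$ first (making homoclinic tails smaller than $\epsilon$), then $\sigma$ good enough to make the sofic defect less than $\epsilon$, and finally $\epsilon$ small in terms of $\delta$, $|F|$, and $q$. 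With these estimates in place, $\cJ$ is a $(\rho, F, \delta, \sigma_i)$-independence set of density $\geq q$ for $(U_1, \dots, U_k)$ for every sufficiently good $\sigma_i$, yielding the required positive upper independence density over $\Sigma$ and hence $(x_1, \dots, x_k) \in \IE^\Sigma_k(X_A, G)$.
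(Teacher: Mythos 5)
Your construction is in the right spirit and close to the paper's, but as written it does not prove the theorem, for a quantifier reason built into Definition~\ref{D-positive independence density}: positive upper independence density requires a \emph{single} $q>0$ that works for every pair $(F,\delta)$. In your argument the window $K$ must contain $F$ and must also be large enough that the $\ell^1$-tails of the homoclinic data outside $K$ are smaller than $\epsilon=\epsilon(\delta,|F|)$, and your independence set $\cJ$ then has density only $q=1/(2|KFK|)$. This $q$ tends to $0$ as $F$ grows and $\delta$ shrinks, so no uniform density has been exhibited. The paper's proof avoids exactly this by decoupling the two scales: its tiling window $K$ depends only on the neighbourhoods $U_j$ (via continuity of $z\mapsto P(z(A^*)^{-1})$ on bounded subsets of $(\Zb^n)^G$), which gives the uniform density $1/(2|K|^2)$, while all of the $(F,\delta)$-dependence is absorbed into Lemma~\ref{L-point}, which shows that \emph{any} bounded integer-valued $\xi$ on $\{1,\dots,d\}$, pulled back along $\sigma$-orbits and convolved with $(A^*)^{-1}\in M_n(\ell^1(G))$, lands in $\Map(\rho,F,\delta,\sigma)$ once $\sigma$ is a good enough sofic approximation. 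Because your map places whole homoclinic points (whose supports are only $\ell^1$-small, not zero, outside a finite set depending on $\epsilon$) as a piecewise translate supported on disjoint tiles, the tile size is forced to grow with $(F,\delta)$ and the decoupling is lost. Repairing this essentially requires truncating the integer configurations to a fixed window and letting $(A^*)^{-1}$ do the smoothing, which is the paper's construction.

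There is a second problem in the equivariance estimate. The set of $b$ lying outside every tile $\sigma(K)a$ is not of small density: the tiles cover at most $|K|\,|\cJ|/d_i$ of the space, and with $|\cJ|$ only guaranteed to be of order $d_i/|KFK|$ the complement has density bounded below, so it cannot be absorbed into ``$\epsilon$ plus the sofic defect.'' For such $b$ one has $s\varphi_\omega(b)=0$, and one must separately verify that $\varphi_\omega(\sigma(s)b)$ also has a small coordinate at $e$; but $\sigma(s)b$ can enter a tile $\sigma(K)a'$ at a position $t\in K\setminus sK$, where the coordinate $(\tilde{x}_{\omega(a')})_{t^{-1}}$ need not be small unless one additionally arranges that the $\epsilon$-essential support $K_1$ of the homoclinic points satisfies $F^{-1}K_1\subseteq K$. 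This incoming-boundary case is absent from your accounting, which only treats the outgoing case $st_{a,b}\notin K$.
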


Before proceeding to the proof of Theorem~\ref{T-invertible to UPE}, we give an application
to a problem of Deninger. For an invertible element $f$ in the group von Neumann algebra $\cL G$
of a countable discrete group $G$ the Fuglede-Kadison determinant is defined
by $\det_{\cL G} f = \exp \tr (\log |f|)$ where $\tr$ is the canonical trace on $\cL G$.

In \cite[Question 26]{Den09} Deninger asked whether $\det_{\cL G} f>1$ whenever
$f\in \Zb G$ is invertible in $\ell^1(G)$ and has no left inverse in $\Zb G$.
An affirmative answer was given by Deninger and Schmidt
in the case that $G$ is residually finite and amenable \cite[Cor.\ 6.7]{DenSch07} and more
generally by Chung and the second author in the case $G$ is
amenable \cite[Corollary 7.9]{ChuLi11}.
Using Theorem~\ref{T-invertible to UPE}, Proposition~\ref{P-basic}(3),
Theorem~7.1 in \cite{KerLi11}, and the argument in the proof of Corollary~7.9 in \cite{ChuLi11},
we obtain an answer to Deninger's question for all countable residually finite groups:

\begin{corollary} \label{C-answer to Deninger}
Suppose that $G$ is residually finite and that $f$ is an element of $\Zb G$ which is invertible in
$\ell^1(G)$ and has no left inverse in $\Zb G$. Then $\det_{\cL G} f>1$.
\end{corollary}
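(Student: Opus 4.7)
The plan is to assemble the listed ingredients into a short chain of implications, with the only substantive content of Corollary~\ref{C-answer to Deninger} already contained in Theorem~\ref{T-invertible to UPE} and the principal algebraic action entropy formula from \cite{KerLi11}.

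First, using that $G$ is residually finite, I would fix a sofic approximation sequence $\Sigma = \{\sigma_i : G \to \Sym(d_i)\}$ arising from finite quotients of $G$, since this is the setting in which the entropy computation for principal algebraic actions from \cite{KerLi11} applies. Consider the principal algebraic action $G \curvearrowright X_f := \widehat{\Zb G/\Zb G f}$, which corresponds to the case $n=1$, $A = f$ of the setup of Theorem~\ref{T-invertible to UPE}, and note that $f$ is invertible in $M_1(\ell^1(G)) = \ell^1(G)$ by hypothesis.

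Next I would check that $X_f$ has more than one point. Since $f$ has no left inverse in $\Zb G$, the left ideal $\Zb G f$ is proper in $\Zb G$, hence $\Zb G/\Zb G f$ is a nonzero discrete Abelian group, and so its Pontryagin dual $X_f$ is nontrivial. Pick any two distinct points $x, y \in X_f$. By Theorem~\ref{T-invertible to UPE}, every tuple of points in $X_f$ is a $\Sigma$-IE-tuple, so in particular $(x,y)$ is a nondiagonal $\Sigma$-IE-pair.

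With a nondiagonal $\Sigma$-IE-pair in hand, Proposition~\ref{P-basic}(3) yields $h_\Sigma(X_f, G) > 0$. Finally, by Theorem~7.1 of \cite{KerLi11}, the sofic topological entropy of the principal algebraic action for a sofic approximation sequence built from finite quotients is given by $h_\Sigma(X_f, G) = \log \det_{\cL G} f$. Combining these two facts gives $\log \det_{\cL G} f > 0$, i.e., $\det_{\cL G} f > 1$, as desired. There is no real obstacle here: the heavy lifting was done in Theorem~\ref{T-invertible to UPE} (independent behaviour saturates expansive algebraic actions) and in the entropy computation of \cite{KerLi11}; the argument closely parallels the amenable case treated in Corollary~7.9 of \cite{ChuLi11}, with sofic IE-tuples replacing amenable IE-tuples throughout.
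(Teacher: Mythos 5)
Your proposal is correct and follows exactly the route the paper indicates: Theorem~\ref{T-invertible to UPE} to get a nondiagonal $\Sigma$-IE-pair (after noting $X_f$ is nontrivial because $\Zb G f$ is a proper left ideal), Proposition~\ref{P-basic}(3) to convert this into $h_\Sigma(X_f,G)>0$, and the entropy formula $h_\Sigma(X_f,G)=\log\det_{\cL G}f$ from Theorem~7.1 of \cite{KerLi11} for $\Sigma$ built from finite quotients. The only detail worth making explicit is that invertibility in $\ell^1(G)$ gives invertibility in $C^*(G)$, which is the hypothesis under which that entropy computation is stated.
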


Let $n\in \Nb$.  For $A=(A_{ij})_{1\le i, j\le n}\in M_n(\ell^1(G))$, we set
\[
\|A\|_1=\sum_{1\le i, j\le n}\|A_{ij}\|_1.
\]
For $(a_1,\dots, a_n)\in \Rb^d$, we set $\|(a_1, \dots, a_n)\|_\infty=\max_{1\le j\le n}|a_j|$. For $\xi: \{1, \dots, d\}\rightarrow \Zb^n$, we set
\[
\|\xi\|_\infty=\max_{1\le j\le d}\|\xi(j)\|_\infty.
\]
Denote by $P$ the natural quotient map $(\Rb^n)^G\rightarrow ((\Rb/\Zb)^n)^G$.
Denote by $\rho$ the canonical metric on $\Rb/\Zb$ defined by
\[
\rho(t_1+\Zb, t_2+\Zb):=\min_{m\in \Zb}|t_1-t_2-m|.
\]
By abuse of notation, we also use $\rho$ to denote the metric on $(\Rb/\Zb)^n$ defined by
\[
\rho((a_1, \dots, a_n), (b_1, \dots, b_n)):=\max_{1\le j\le n}\rho(a_j, b_j).
\]
Via the coordinate map at the identity element of $G$,
we will think of $\rho$ as a continuous pseudometric on $((\Rb/\Zb)^n)^G$.

\begin{lemma} \label{L-point}
Let $n \in \Nb$, and let $A$ be an element of $M_n(\Zb G)$ which is invertible in $M_n(\ell^1(G))$.
Consider the action $G\curvearrowright X_A$.
Let $F$ be a nonempty finite subset of $G$ and let $M, \delta>0$. For every $d\in \Nb$,
good enough sofic approximation $\sigma: G\rightarrow \Sym(d)$, and
$\xi: \{1, \dots, d\}\rightarrow \Zb^n$ with $\|\xi\|_\infty\le M$,
if we define $h:\{1, \dots, d\}\rightarrow (\Zb^n)^G$ and $\varphi: \{1, \dots, d\}\rightarrow X_A$ by
\[
(h(a))_{t^{-1}}=\xi(ta) \text{ for all } t\in G
\]
and
\[
\varphi(a)=P((h(a))(A^*)^{-1})
\]
then $\varphi\in \Map(\rho, F, \delta, \sigma)$.
\end{lemma}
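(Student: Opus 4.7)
First one checks that $\varphi(a) \in X_A$: by associativity of the right $M_n(\ell^1(G))$-action we have $\varphi(a) A^* = P(h(a)(A^*)^{-1} A^*) = P(h(a)) = 0$, since $h(a)$ has entries in $\Zb^n$. Next, set $B = (A^*)^{-1} \in M_n(\ell^1(G))$ and write $B_{ij} \in \ell^1(G)$ for its entries. Since $\rho$ is evaluated at the identity coordinate, we must bound
\[
\rho_2(\varphi\sigma_s,\alpha_s\varphi)^2 = \frac{1}{d}\sum_{a=1}^d \rho\bigl((\varphi(sa))_e,(s\varphi(a))_e\bigr)^2
\]
for each $s\in F$. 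Unfolding the definitions, using that the left $G$-action commutes with the right $M_n(\ell^1(G))$-action, and substituting $(h(a))_{r^{-1}}^{(i)} = \xi_i(\sigma_r(a))$, one finds that the coordinate-$e$ difference equals (the image in $(\Rb/\Zb)^n$ of)
\[
\sum_{i=1}^n \sum_{r\in G}\bigl(\xi_i(\sigma_r\sigma_s(a)) - \xi_i(\sigma_{rs}(a))\bigr)(B_{ij})_r
\]
in the $j$-th coordinate.

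The strategy is then to exploit the two features of this sum: the $\ell^1$-decay of $B$ in the variable $r$, and the asymptotic multiplicativity $\sigma_r\sigma_s = \sigma_{rs}$ of the sofic approximation. Concretely, choose a finite set $E\subseteq G$ such that $\sum_{r\notin E}\sum_{i,j}|(B_{ij})_r| < \delta/(8M)$; the existence of such an $E$ depends only on $\delta$ and $M$. Now, assuming $\sigma$ is a sufficiently good sofic approximation of $G$ (good depending on $E$ and $F$), the set
\[
W := \{a\in\{1,\dots,d\}: \sigma_r\sigma_s(a) = \sigma_{rs}(a)\text{ for all }r\in E, s\in F\}
\]
has cardinality at least $(1-\eta)d$ for $\eta$ as small as we like.

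For $a\in W$ and $s\in F$, every summand with $r\in E$ in the displayed expression vanishes, so the remaining sum over $r\notin E$ is bounded in absolute value by $2M\cdot\delta/(8M) = \delta/4$; when $\delta/4 < 1/2$, this bound transfers to $\rho$ since the quotient metric on $(\Rb/\Zb)^n$ is dominated by the max-norm in $\Rb^n$ for small vectors. For $a\notin W$ we use the trivial bound $\rho\le 1/2$. Choosing $\eta$ small enough that $(\delta/4)^2 + \eta/4 < \delta^2$ then yields $\rho_2(\varphi\sigma_s,\alpha_s\varphi)<\delta$ uniformly in $s\in F$, which is the desired conclusion. The only real obstacle is the bookkeeping around the convention $(h(a))_{t^{-1}} = \xi(ta)$ and making sure the truncation of $B$ outside $E$ can be combined uniformly with the sofic-approximation cancellation over $s\in F$; both follow from straightforward finite choices.
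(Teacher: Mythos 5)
Your proposal is correct and follows essentially the same route as the paper: the paper packages your $\ell^1$-truncation of $(A^*)^{-1}$ into the choice of a finite window $K$ such that boundedness plus agreement on $K$ forces closeness at the identity coordinate, and then uses approximate multiplicativity of $\sigma$ on $K^{-1}\times F$ to show $s(h(a))$ and $h(sa)$ agree on $K$ for all $a$ in a $(1-(\delta/2)^2)$-fraction of $\{1,\dots,d\}$, finishing with the same diameter-one bound on the exceptional set. Your version simply inlines that continuity step by expanding the convolution and splitting the sum over $r\in E$ and $r\notin E$; the mechanism is identical.
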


\begin{proof}
Since $(A^*)^{-1}\in M_d(\ell^1(G))$, there exists a nonempty finite subset
$K$ of $G$ such that for all $z_1, z_2\in (\Zb^n)^G$ such that
$\|z_1\|_\infty, \|z_2\|_\infty\le M$ and $z_1, z_2$ coincide on $K$, one has
$\|(z_1(A^*)^{-1})_e-(z_2(A^*)^{-1})_e\|_\infty<\delta/2$, which implies
that $\rho(P(z_1(A^*)^{-1}), P(z_2(A^*)^{-1}))<\delta/2$.

Denote by $\Lambda$ the set of all $a\in \{1, \dots, d\}$ satisfying $t(sa)=(ts)a$ for all $t\in K^{-1}$ and $s\in F$.
When $\sigma$ is a good enough sofic approximation for $G$, one has $|\Lambda|\ge (1-(\delta/2)^2)d$.
Let $a\in \Lambda$ and $s\in F$. Then
\[
s(\varphi(a))=P((s(h(a)))(A^*)^{-1})
\]
and
\[
\varphi(sa)=P((h(sa))(A^*)^{-1}).\]
For every $t\in K^{-1}$ one has
\[
(s(h(a)))_{t^{-1}}=(h(a))_{s^{-1}t^{-1}}=\xi((ts)a)=\xi(t(sa))=(h(sa))_{t^{-1}}.
\]
Thus, by the choice of $K$, we have $\rho(s(\varphi(a)), \varphi(sa))<\delta/2$.
Note that $((\Rb/\Zb)^n)^G$ has diameter $1$ under $\rho$. It follows that
\[
\rho_2(s\varphi(\cdot), \varphi(s\cdot))\le ((\delta/2)^2+1-|\Lambda|/d)^{1/2}<\delta.
\]
Therefore $\varphi\in \Map(\rho, F, \delta, \sigma)$.
\end{proof}

We are ready to prove Theorem~\ref{T-invertible to UPE}.

\begin{proof}[Proof of Theorem~\ref{T-invertible to UPE}]
Let $\ox=(x_1,\dots, x_k)$ be a $k$-tuple of points in $X_A$.
Then for each $1\le j\le k$ there is a $z_j\in (\Zb^n)^G$ such that
$\|z_j\|_\infty\le \|A\|_1$ and $x_j=P(z_j(A^*)^{-1})$.

Let $U_1\times \cdots \times U_k$ be a product neighborhood of $\ox$ in $X^k$.
Since the map from bounded subsets of $(\Zb^n)^G$ equipped with the pointwise convergence topology
to $X_A$ sending $z$ to $P(z(A^*)^{-1})$ is continuous \cite[Prop.\ 4.2]{DenSch07},
there is a nonempty finite subset $K$ of $G$ such that for every $1\le j\le k$ and
$z\in (\Zb^n)^G$ with $\|z\|_\infty\le \|A\|_1$ and $z|_K=z_j|_K$, one has $P(z(A^*)^{-1})\in U_j$.

Let $F$ be a nonempty finite subset of $G$ and $\delta>0$. Let $d\in \Nb$ and
let $\sigma$ be a map from $G$ to $\Sym(d)$.
Denote by $\Lambda$ the set of all $a\in \{1, \dots, d\}$ such that $sa\neq ta$ for all distinct
$s, t\in K^{-1}$. When $\sigma$ is a good enough sofic approximation for $G$,
we have $|\Lambda|\ge d/2$. Let $\cJ$ be a maximal subset of $\Lambda$ subject to the
condition that the sets $K^{-1}a$ for $a\in \cJ$ are pairwise disjoint. Then $\Lambda\subseteq (\sigma(K^{-1}))^{-1}\sigma(K^{-1})\cJ$, and hence $|\Lambda|\le |K|^2|\cJ|$.
Therefore $|\cJ|\ge d/(2|K|^2)$.

We claim that $\cJ$ is a $(\rho, F, \delta, \sigma)$-independence set for $\oU=(U_1, \dots, U_k)$
when $\sigma$ is a good enough sofic approximation for $G$. Let $\omega$ be a map from
$\{1, \dots, d\}$ to $\{1, \dots, k\}$. Define $\xi: \{1, \dots, d\}\rightarrow \Zb^n$
by $\xi(ta)=(z_{\omega(a)})_{t^{-1}}$ for all $a\in \cJ$ and $t\in K^{-1}$, and $\xi(b)=0$
for all $b$ not in $K^{-1}\cJ$. Then we have $h:\{1, \dots, d\}\rightarrow (\Zb^n)^G$ and
$\varphi\in \Map(\rho, F, \delta, \sigma)$ defined in Lemma~\ref{L-point} for $M=\|A\|_1$
when $\sigma$ is a good enough sofic approximation. Let $a\in \cJ$. For any $t\in K^{-1}$,
one has
\[
(h(a))_{t^{-1}}=\xi(ta)=(z_{\omega(a)})_{t^{-1}}.
\]
By the choice of $K$ we have $\varphi(a)=P(h(a)(A^*)^{-1})\in U_{\omega(a)}$.
This proves our claim, and finishes the proof of the theorem.
\end{proof}

\section{Orbit $\IE$-tuples and untameness}\label{S-untame}

Let $G$ be a countably infinite group acting continuously on a compact Hausdorff space $X$.

\begin{theorem}\label{T-orbit IE to nontame}
Let $k\in\Nb$ and let $\oA$ be a $k$-tuple of subsets of $X$.
Suppose that $\oA$ has positive independence density over $G$.
Then $\oA$ has an infinite independence set in $G$.
\end{theorem}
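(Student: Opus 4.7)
The plan is to deduce the theorem from Rosenthal's $\ell^{1}$ dichotomy applied to a sequence of indicator $k$-tuples. Since $G$ is countably infinite, I would first enumerate $G = \{g_{1},g_{2},\ldots\}$, and note that the positive independence density hypothesis (with constant $q>0$) furnishes, for each $n$, an independence set $J_{n}\subseteq\{g_{1},\ldots,g_{n}\}$ of cardinality at least $qn$. The goal is then to extract an infinite independence set from this asymptotically dense family of finite ones.

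The central move is to view the sequence of $k$-tuples $\phi_{n}=(\mathbf{1}_{g_{n}^{-1}A_{1}},\ldots,\mathbf{1}_{g_{n}^{-1}A_{k}})$ as a bounded sequence in the Banach space $(\ell^{\infty}(X))^{k}$ and apply Rosenthal's $\ell^{1}$ theorem. This produces a subsequence $(\phi_{n_{j}})$ that is either (a) weakly Cauchy --- equivalently, pointwise (coordinate-wise) convergent --- or (b) equivalent to the $\ell^{1}$-basis. In case (b), the fact that our functions take only $\{0,1\}$-values means the $\ell^{1}$-basic property translates exactly into Boolean $k$-independence of the family $\{(g_{n_{j}}^{-1}A_{1},\ldots,g_{n_{j}}^{-1}A_{k})\}_{j}$, which is precisely an infinite independence set for $\oA$.

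In case (a), for each $x\in X$ and each $i$ the sequence $\mathbf{1}_{g_{n_{j}}^{-1}A_{i}}(x)$ is eventually constant with a limit $\chi_{i}(x)\in\{0,1\}$. Set $Z=\bigcap_{i=1}^{k}\{x:\chi_{i}(x)=1\}$. If $Z\neq\emptyset$, then picking any $x_{0}\in Z$ and a threshold $J_{0}$ past which all $k$ indicators at $x_{0}$ have stabilized, the tail $\{g_{n_{j}}:j\geq J_{0}\}$ is an infinite independence set witnessed by $x_{0}$ (every labeling $\omega$ has $x_{0}$ in the corresponding intersection). Otherwise $Z=\emptyset$, and every potential witness $x$ has some coordinate $i(x)$ with $\chi_{i(x)}(x)=0$; for a finite independence set $J$ lying sufficiently deep in the tail of $(g_{n_{j}})$, a surjective labeling $\omega:J\to\{1,\ldots,k\}$ (available as soon as $|J|\geq k$) would demand a witness in the empty set $Z$. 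This bounds the cardinality of independence sets in deep tails by $k-1$, contradicting the density assertion $|J_{n}|\geq qn\to\infty$ (applied to $F=\{g_{n_{1}},\ldots,g_{n_{N}}\}$ for large $N$).

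The main obstacle is the rigorous execution of the $Z=\emptyset$ case: the threshold for pointwise convergence depends on the witness $x$, so to derive a clean bound on the cardinality of tail independence sets one has to manage the dependence of these thresholds on the finitely many witnesses $x_{\omega}$ attached to each candidate $J$. The natural way to handle this is either to pass, via compactness of $X$ and an Egoroff-type argument, to a subset of $X$ on which the convergence is uniform, or to iterate the extraction so that the tail escapes all relevant thresholds simultaneously. This is where the full quantitative strength of Rosenthal's $\ell^{1}$ dichotomy, rather than a purely Ramsey-theoretic argument, enters essentially; indeed examples like the Schreier family show that positive density in an arbitrary adequate family of finite subsets need not produce an infinite member, so the Boolean structure of $k$-independence must be used.
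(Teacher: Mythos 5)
Your proposal has genuine gaps in both branches of the Rosenthal dichotomy, and more fundamentally it discards the group structure that the statement actually needs.

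In case (b), equivalence of the vector-valued sequence $(\phi_{n_j})$ to the standard $\ell^1$-basis does \emph{not} translate into independence of the family $\{(g_{n_j}^{-1}A_1,\dots,g_{n_j}^{-1}A_k)\}_j$. The $\ell^1$-basis condition concerns norms of linear combinations; for $\{0,1\}$-valued functions it corresponds (after a further extraction, via the quantitative Rosenthal--Dor lemma) to independence of pairs of the form $(B, B^{\rm c})$ coming from level sets of a \emph{single} coordinate. It says nothing about the mixed intersections $\bigcap_j g_{n_j}^{-1}A_{\omega(j)}$ demanded by $k$-independence. For instance, with $A_1,A_2$ disjoint, the sequence of pairs can be $\ell^1$ simply because the first coordinates $\mathbf{1}_{g_{n_j}^{-1}A_1}$ already are, while every labeling that uses both indices gives an empty intersection.

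In case (a) with $Z=\emptyset$, you correctly identify the obstacle but the proposed remedies do not resolve it. Each candidate independence set $J$ in a tail comes with its own witnesses $x_\omega$, whose stabilization thresholds may grow with $J$; pointwise convergence therefore yields no uniform bound on $|J|$, and positive density is perfectly consistent with the witnesses ``escaping to infinity.'' There is no invariant measure available to run an Egoroff argument, and iterated subsequence extraction does not control witnesses that change with $N$. Your own remark about the Schreier family is the decisive point against the whole strategy: once you enumerate $G$ and forget the action, you are trying to prove that positive density in a hereditary family of finite sets forces an infinite member, which is false in general. What rescues the theorem is not the Boolean structure of independence alone but \emph{translation invariance}: if $J$ is an independence set for $\oA$ then so is $Js$ for every $s\in G$. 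The paper's proof uses exactly this. It builds windows $F_{n+1}=F_n\sqcup F_ns_n$, takes $E_n\subseteq F_n$ independent with $|E_n|\ge q|F_n|$, and at each level translates whichever half carries a $q$-fraction of $E_n$ back into the smaller window; since only finitely many configurations can occur inside each fixed $F_k$, a diagonal extraction produces an increasing chain of independence sets of unbounded cardinality, whose union (using closure of independence sets under increasing unions) is the desired infinite independence set. Your argument as it stands does not reach this conclusion.
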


\begin{proof}
Denote by $q$ the density of $\oA$ over $G$.

Let $F_1$ be a nonempty finite subset of $G$. Take $s_1, s_2, \dots$ in $G$ such that setting
$F_{n+1}=F_n \cup F_ns_n$ for all $n\in \Nb$ one has
$F_n\cap F_ns_n=\emptyset$ for all $n\in \Nb$.

Let $n\in \Nb$. Take an independence set $E_n$ of $\oA$ contained in $F_n$ with $|E_n|\ge q|F_n|$. We will construct, inductively on $m$, nonempty finite subsets $F^{(n)}_{m, k}$ and $E^{(n)}_m$ of $G$
for all $1\le m\le k\le n$ and $t^{(n)}_m\in G$ for all $1\le m<n$ such that
\begin{enumerate}
\item $F^{(n)}_{n, n}=F_n$ and $E^{(n)}_n=E_n$;

\item $t^{(n)}_m$ is equal to either $e$ or $s_m^{-1}$ for each $1\le m<n$;

\item $F^{(n)}_{m, k}=F^{(n)}_{m+1, k}t^{(n)}_m$ and $F^{(n)}_{m, m}=F_m$ for all $1\le m< k\le n$;

\item $E^{(n)}_m=E^{(n)}_{m+1}t^{(n)}_m$ for each $1\le m<n$;

\item $|E^{(n)}_m\cap F^{(n)}_{m, k}|\ge q|F^{(n)}_{m, k}|$ for all $1\le m\le k\le n$.
\end{enumerate}
To start with, we define $F^{(n)}_{n, n}$ and $E^{(n)}_n$ according to (1). If $|E^{(n)}_n\cap F_{n-1}|\ge q|F_{n-1}|$, we set $t^{(n)}_{n-1}=e$.
Otherwise, since $|E^{(n)}_n\cap F^{(n)}_{n, n}|\ge q|F^{(n)}_{n, n}|$ and $F^{(n)}_{n, n}=F_n$ is the disjoint union of $F_{n-1}$ and $F_{n-1}s_{n-1}$, we must
have $|E^{(n)}_n\cap F_{n-1}s_{n-1}|\ge q|F_{n-1}s_{n-1}|$, and we set $t^{(n)}_{n-1}=s^{-1}_{n-1}$. Defining $F^{(n)}_{n-1, k}$ for $n-1\le k\le n$ and $E^{(n)}_{n-1}$ according
to (3) and (4) respectively, we have that (5) holds for $m=n-1$. Next,
if $|E^{(n)}_{n-1}\cap F_{n-2}|\ge q|F_{n-2}|$, we set $t^{(n)}_{n-2}=e$.
Otherwise, since $|E^{(n)}_{n-1}\cap F^{(n)}_{n-1, n-1}|\ge q|F^{(n)}_{n-1, n-1}|$ and $F^{(n)}_{n-1, n-1}=F_{n-1}$ is the disjoint union of $F_{n-2}$ and $F_{n-2}s_{n-2}$, we must
have $|E^{(n)}_{n-1}\cap F_{n-2}s_{n-2}|\ge q|F_{n-2}s_{n-2}|$, and
we set $t^{(n)}_{n-2}=s^{-1}_{n-2}$. Defining $F^{(n)}_{n-2, k}$ for $n-2\le k\le n$ and $E^{(n)}_{n-2}$
according to (3) and (4) respectively, we have that (5) holds for $m=n-2$. Continuing in this way, we define
$F^{(n)}_{m, k}, E^{(n)}_m$, and $t^{(n)}_m$ satisfying the above conditions.

Note that if $E'$ is an independence set for $\oA$ in $G$, then $E's$ is an independence set for $\oA$ in $G$ for all $s\in G$.
By induction on $m$, we find easily that $E^{(n)}_m$ is an independence set for $\oA$ in $G$ for all
$n\in \Nb$ and $1\le m\le n$. Also note that for any $1\le m\le k\le n$, $F^{(n)}_{m, k}$ depends only on
$F_k\cap E^{(n)}_k$. In particular, for any fixed $k\in \Nb$ the number of sets appearing in
$F^{(n)}_{1, k}$ for all $n\ge k$ is finite. Thus we can find a strictly increasing sequence $n_1<n_2<\dots $
in $\Nb$ such that for any fixed $k\in \Nb$ the sets $F^{(n_l)}_{1, k}$ and
$E^{(n_l)}_1\cap F^{(n_l)}_{1, k}$ do not depend on $l\ge k$.
Set $E=\bigcup_{k\in \Nb}(E^{(n_k)}_1\cap F^{(n_k)}_{1, k})$.
Since $|E^{(n_k)}_1\cap F^{(n_k)}_{1, k}|\ge q|F^{(n_k)}_{1, k}|=q|F_k|=q|F_1|2^{k-1}$
for every $k\in \Nb$, the set $E$ is infinite. For evey $k\in \Nb$ one has
\begin{align*}
(E^{(n_{k+1})}_1\cap F^{(n_{k+1})}_{1, k+1})\cap F^{(n_k)}_{1, k}&=(E^{(n_{k+1})}_1
\cap F^{(n_{k+1})}_{1, k+1})\cap F^{(n_{k+1})}_{1, k}\\
&=E^{(n_{k+1})}_1\cap F^{(n_{k+1})}_{1, k}\\
&=E^{(n_k)}_1\cap F^{(n_k)}_{1, k}.
\end{align*}
Thus the sequence $\{E^{(n_k)}_1\cap F^{(n_k)}_{1, k}\}_{k\in \Nb}$ is increasing. Since the family of
independence sets for $\oA$ in $G$ is closed under taking increasing unions, we conclude that $E$ is an
independence set for $\oA$ in $G$.
\end{proof}

Recall that that a tuple $(x_1 , \dots , x_k ) \in X^k$ is an {\it IT-tuple} if for every product
neighbourhood $U_1 \times\cdots\times U_k$ of $(x_1 , \dots , x_k )$ the tuple $(U_1 ,\dots, U_k )$
has an infinite independence set \cite{KerLi07}.

\begin{corollary}\label{C-orbit IE to IT}
Every orbit IE-tuple of the action $G\curvearrowright X$ is an IT-tuple.
\end{corollary}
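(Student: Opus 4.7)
The plan is to observe that Corollary~\ref{C-orbit IE to IT} is an immediate consequence of Theorem~\ref{T-orbit IE to nontame} together with the definitions of orbit IE-tuple and IT-tuple. First I would let $\ox = (x_1, \dots, x_k) \in X^k$ be an arbitrary orbit IE-tuple of the action $G \curvearrowright X$, and fix an arbitrary product neighbourhood $U_1 \times \cdots \times U_k$ of $\ox$. By Definition~\ref{D-orbit IE}, the tuple $\oU = (U_1, \dots, U_k)$ has positive independence density over $G$, that is, there is some $q > 0$ such that every finite subset $F \subseteq G$ contains an independence set for $\oU$ of cardinality at least $q|F|$.

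Next I would invoke Theorem~\ref{T-orbit IE to nontame} with $\oA := \oU$: since $\oU$ has positive independence density over $G$, it has an infinite independence set in $G$. Since the product neighbourhood $U_1 \times \cdots \times U_k$ of $\ox$ was arbitrary, this verifies the defining condition for $\ox$ to be an IT-tuple, namely that every product neighbourhood of $\ox$ admits an infinite independence set. Hence $\ox$ is an IT-tuple, as desired.

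There is essentially no obstacle here beyond the substantial work already carried out in the proof of Theorem~\ref{T-orbit IE to nontame}, where the key combinatorial extraction of an infinite independence set from the positive-density condition was performed by the iterative bisection argument along the sequence $F_n, F_n s_n$ and the diagonal/stabilization passage to a strictly increasing subsequence $n_1 < n_2 < \cdots$. The corollary itself requires only that one match the definitions of orbit IE-tuple (Definition~\ref{D-orbit IE}) and IT-tuple as recalled just above the corollary statement.
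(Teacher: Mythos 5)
Your proposal is correct and is exactly the paper's (implicit) argument: the corollary is a direct consequence of Theorem~\ref{T-orbit IE to nontame} applied to the tuple $(U_1,\dots,U_k)$ for each product neighbourhood, after matching Definition~\ref{D-orbit IE} with the definition of IT-tuple. Nothing further is needed.
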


Write $C(X)$ for the Banach space of continuous complex-valued functions on $X$
with the supremum norm.
The action $G\curvearrowright X$ is said to be {\it tame} if no element $f\in C(X)$
admits an infinite subset $J$ of $G$ such that, for $s$ ranging in $J$, the family of functions
$x\mapsto f(s^{-1} x)$ in $C(X)$ is equivalent to the standard basis of $\ell^1$,
meaning that there is a bijection between the two which extends to an isomorphism
(i.e., a bounded linear map with bounded inverse) between
the closures of their linear spans \cite{Koh95,Gla06}.
The action is tame if and only if there is no nondiagonal IT-pair in $X\times X$ \cite[Prop.\ 6.4]{KerLi07}.
Thus from the above corollary we see that a tame action has no nondiagonal orbit IE-tuples.

\section{$\Sigma$-IE-tuples and Li-Yorke Chaos}\label{S-chaos}

Let $G$ be a countably infinite sofic group and
$\Sigma=\{\sigma_i: G\rightarrow \Sym(d_i)\}_{i=1}^\infty$ a sofic approximation sequence for $G$.
We fix a free ultrafilter $\fF$ on $\Nb$ and use it in the definitions of sofic topological entropy
and $\Sigma$-IE-tuples, as in Section~\ref{S-product}.

Let $G\curvearrowright X$ be a continuous action on a compact metrizable space.
Let $\rho$ be a compatible metric on $X$. We say that $(x, y)\in X\times X$ is a {\it Li-Yorke pair} if
\[
\limsup_{G \ni s\to \infty}\rho(sx, sy)>0 \hspace*{3mm}\text{and} \hspace*{3mm} \liminf_{G \ni s\to \infty}\rho(sx, sy)=0.
\]
where the limit supremum and limit infimum
mean the limits of $\sup_{s\in G\setminus F} \rho(sx, sy)$ and\linebreak $\inf_{s\in G\setminus F} \rho(sx, sy)$, respectively,
over the net of finite subsets $F$ of $G$.
Note that the definition of Li-Yorke pair does not depend on the choice of the metric $\rho$.
We say that the action $G\curvearrowright X$ is {\it Li-Yorke chaotic} if there is an uncountable
subset $Z$ of $X$ such that every nondiagonal pair $(x, y)$ in $Z\times Z$ is a Li-Yorke pair.
These definitions adapt those for continuous $\Nb$-actions, which have their origins
in \cite{LiYor75}. In that setting Blanchard, Glasner, Kolyada, and Maass showed that positive entropy
implies Li-Yorke chaos \cite{BlaGlaKolMaa02}. The following theorem demonstrates that, in our sofic context,
positive topological entropy with respect to some sofic approximation sequence implies
Li-Yorke chaos (cf.\ \cite[Thm.\ 3.18]{KerLi07}).

\begin{theorem}\label{T-positive entropy to chaos}
Suppose that
$k\ge 2$ and $\ox=(x_1, \dots, x_k)$ is a $\Sigma$-$\IE$-tuple in
$X^k$ with $x_1, \dots, x_k$ pairwise distinct. For each $1\le j\le k$, let $A_j$ be a
neighbourhood of $x_j$. Then there exist Cantor
sets $Z_j\subseteq A_j$ for $j=1,\dots ,k$ such that the
following hold:
\begin{enumerate}
\item every nonempty finite
tuple of points in $Z:=\bigcup_jZ_j$ is a $\Sigma$-$\IE$-tuple;

\item for all $m\in \Nb$, distinct $y_1, \dots, y_m \in Z$,
and $y'_1, \dots, y'_m \in Z$ one has
\[ \liminf_{G\ni s\to \infty}\max_{1\le i\le m} \rho(sy_i, y'_i)=0.\]
\end{enumerate}
\end{theorem}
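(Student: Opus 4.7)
The plan is to adapt the proof of Theorem~3.18 in \cite{KerLi07} to the sofic setting, building the Cantor sets $Z_j$ by an inductive doubling procedure on finite $\Sigma$-IE-tuples sitting inside the $A_j$. Property (1) will come from the closure of the set of $\Sigma$-IE-tuples under limits and projections (Proposition~\ref{P-basic}(4),(5)), while property (2) will be arranged by carrying through the construction specific elements of $G$ that realize approximate ``shuffling patterns'' of the current stage's points, using Proposition~\ref{P-sofic IE to orbit IE} together with Theorem~\ref{T-orbit IE to nontame}.

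I would first shrink the $A_j$ to pairwise disjoint closed neighbourhoods of $x_j$, and then construct recursively, for each $n \in \Nb$, a map $\zeta^{(n)} \colon \{1,\dots,k\} \times \{0,1\}^n \to X$ with $\zeta^{(n)}(j,w) \in A_j$ such that the resulting $k\cdot 2^n$-tuple (in some fixed order) is a $\Sigma$-IE-tuple in $X^{k\cdot 2^n}$, and such that for each $w$ and each $b\in\{0,1\}$ one has $\rho(\zeta^{(n+1)}(j, w\star b),\zeta^{(n)}(j,w)) < 2^{-n}$ with $\zeta^{(n+1)}(j,w\star 0) \neq \zeta^{(n+1)}(j,w\star 1)$. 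Taking limits $\zeta^{(\infty)}(j,\xi) := \lim_n \zeta^{(n)}(j,\xi|_n)$ along branches $\xi \in \{0,1\}^{\Nb}$ yields continuous injections $\{0,1\}^{\Nb} \to A_j$ whose images are the desired Cantor sets $Z_j$. The key inductive step, producing $\zeta^{(n+1)}$ from $\zeta^{(n)}$, splits each small open neighbourhood of a stage-$n$ point into two disjoint halves and applies Lemma~\ref{L-decomposition E} iteratively (once per coordinate) to see that some choice of halves preserves positive upper independence density; Proposition~\ref{P-basic}(1) then furnishes genuine points in those halves that together form a new $\Sigma$-IE-tuple.

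Property (1) then follows by closure: any nonempty finite tuple of distinct points in $Z$ is a coordinatewise limit of sub-tuples of the stage-$n$ $\Sigma$-IE-tuples, which are themselves $\Sigma$-IE-tuples by Proposition~\ref{P-basic}(5), and $\Sigma$-IE-tuples form a closed set by Proposition~\ref{P-basic}(4). For property (2), I would augment the construction with a further inductive requirement: at stage $n$, for every function $\phi$ on the stage-$n$ point set and every prescribed finite set $F \subseteq G$, we should be able to find an element $s_{n,\phi} \in G \setminus F$ with $\max_{j,w}\rho(s_{n,\phi}\zeta^{(n)}(j,w),\phi(\zeta^{(n)}(j,w))) < 1/n$. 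Such elements exist because the relevant enlarged tuple, containing both the stage-$n$ points and their intended images under $\phi$, is itself a $\Sigma$-IE-tuple (by property (1) of $Z$, arranged simultaneously), and therefore an orbit IE-tuple with an infinite independence set in $G$ by Proposition~\ref{P-sofic IE to orbit IE} and Theorem~\ref{T-orbit IE to nontame}.

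The main obstacle is precisely this last coordination. An infinite independence set over $G$ produces, for each combinatorial pattern, some point of $X$ whose orbit under the independence set matches the pattern, but property (2) demands that our pre-chosen Cantor points themselves play the role of these pattern-realizing points under a specific group element. This is the subtle bookkeeping carried out in the amenable argument of \cite{KerLi07}, in which one interleaves the pattern-realization step with the doubling step so that the newly selected points at stage $n+1$ can be taken to be exactly the witness points for the patterns prescribed at stage $n$. The adjustments needed for the sofic case are that the local input, namely positive upper independence density of refined tuples, is provided by Lemma~\ref{L-decomposition E} and Proposition~\ref{P-basic}(1) rather than their amenable counterparts, and that the global link from positive upper independence density to infinite independence sets in $G$ is routed through Proposition~\ref{P-sofic IE to orbit IE} and Theorem~\ref{T-orbit IE to nontame}.
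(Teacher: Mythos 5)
Your outline of the Cantor construction and your treatment of property (1) match the paper's: both build nested families of closed sets with shrinking diameters whose full tuples retain positive upper independence density over $\Sigma$ (via Lemma~\ref{L-decomposition E} and a compactness/closedness argument), and property (1) then follows because sub-tuples and limits of $\Sigma$-IE-tuples are $\Sigma$-IE-tuples. The gap is in your mechanism for property (2). You propose to obtain the shuffling elements $s_{n,\phi}$ by passing from the $\Sigma$-IE property of an enlarged tuple to an orbit IE-tuple (Proposition~\ref{P-sofic IE to orbit IE}) and then to an infinite independence set in $G$ (Theorem~\ref{T-orbit IE to nontame}). But an infinite independence set for a tuple of neighbourhoods only yields, for each pattern $\omega$, \emph{some} witness point in $\bigcap_{s} s^{-1}U_{\omega(s)}$; it says nothing quantitative about microstate spaces. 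If you take these witness points as your stage-$(n+1)$ points (the interleaving you describe), you have no control ensuring that the new tuple still has positive upper independence density \emph{over $\Sigma$} --- the orbit/IT machinery only guarantees nonemptiness of the relevant intersections, so you would preserve property (2) at the cost of losing property (1) at the next stage. This is precisely why the paper does not route the argument through Section~\ref{S-untame}.

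The missing ingredient is Lemma~\ref{L-LY} (and its iterate, Lemma~\ref{L-LY multiple}): for a tuple $(A_1,\dots,A_k)$ with positive upper independence density over $\Sigma$, open $U_j\supseteq A_j$, and a finite $E\subseteq G$, there is an $s\in G\setminus E$ such that the refined $k^2$-tuple $(A_i\cap s^{-1}U_j)_{i,j}$ \emph{again} has positive upper independence density over $\Sigma$. This is strictly stronger than producing pattern-realizing points, and its proof is not a corollary of the orbit-level results: it works inside the finite models $\{1,\dots,d_i\}$, using the Karpovsky--Milman form of the Sauer--Shelah lemma to extract a large set $\cJ_i'$ on which all patterns are realized with a common approximate equivariance set $\Theta_i$, and then a counting/pigeonhole argument over a large finite $W\subseteq G$ to find a single $s=t^{-1}t'\in W^{-1}W\setminus\{e\}$ (hence outside $E$) for which $\cJ_i'$ and $\sigma_i(s)^{-1}\cJ_i'$ overlap in a set of positive density; that overlap is the independence set for the refined tuple. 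With this lemma in hand, the doubling and shuffling steps are genuinely interleaved as you intend --- the stage-$(m+1)$ sets are taken of the form $A_{m,i}\cap s_1^{-1}\overline{U_{m,\omega(1)}}\cap\cdots\cap s_n^{-1}\overline{U_{m,\omega(n)}}$ --- and both properties survive the induction. Without it, or some equivalent $\Sigma$-internal statement, your argument does not close.
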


We now set out to prove Theorem~\ref{T-positive entropy to chaos}. We begin
with the following lemmas.

\begin{lemma}\label{L-LY}
Let $k\ge 2$ and $\oA=(A_1, \dots, A_k)$ be a tuple of closed subsets of $X$ with positive upper independence
density over $\Sigma$.  For each $j=1, \dots, k$ let $U_j$ be an open set containing $A_j$.
Let $E$ be a finite subset of $G$.
Then there exists an $s\in G\setminus E$ such that the tuple $\oA'$ consisting of $A_i\cap s^{-1}U_j$ for all $i, j=1, \dots, k$
has positive upper independence density over $\Sigma$.
\end{lemma}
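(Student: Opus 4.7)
My plan is to combine Karpovsky--Milman (Lemma~\ref{L-KM}) with a Khintchine-type inclusion--exclusion on the Loeb space from Section~\ref{S-product}, using the internal-set criterion of Proposition~\ref{P-basics of internal independence sets}(1). The intuition is that a $k^2$-labeling $\tilde\omega$ on a set $\cZ_i$ in a finite model imposes the simultaneous requirement that $\varphi(a)\in A_{\tilde\omega(a)_1}$ and (via the $\Map$-approximation) $\varphi(\sigma_s(a))\in A_{\tilde\omega(a)_2}$; so given an independence set for $\oA$, a positive-density subset $\cZ_i$ with $\sigma_s(\cZ_i)\cap\cZ_i=\emptyset$ and $\cZ_i\cup\sigma_s(\cZ_i)$ lying inside the ``fully free'' subset produced by K-M will serve as an independence set for $\oA'$, provided $\cZ_i$ also avoids the small bad set on which the $\Map$-approximation fails.

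To set things up, fix $\eta>0$ so that the closed $\eta$-neighborhood of each $A_j$ lies in $U_j$, and use Proposition~\ref{P-basics of internal independence sets}(1) to take an internal independence set $Y$ for $\oA$ with $\mu(Y)>0$. Fix $\lambda\in(1,k/(k-1))$, let $c=c(k,\lambda)>0$ be the K-M constant, and (since $G$ is infinite) preselect $t_1,\dots,t_m\in G$ with $mc\mu(Y)>2$ and $t_l^{-1}t_{l'}\notin E\cup\{e\}$ for $l\neq l'$; set $S=\{t_l^{-1}t_{l'}:l\neq l'\}\subseteq G\setminus E$. This preselection is the device that breaks the circular dependence noted at the end.

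For a prescribed $(F,\delta)$ in the definition of positive upper independence density for $\oA'$, take $F_0=F\cup S$ and $\delta_0<\delta$ sufficiently small. For $i$ in a set in $\fF$, $\cY_i:=(Y)_i$ is a $(\rho,F_0,\delta_0,\sigma_i)$-independence set for $\oA$ of density $\geq\mu(Y)-o(1)$. For each $\omega:\cY_i\to\{1,\dots,k\}$ pick $\varphi_\omega\in\Map(\rho,F_0,\delta_0,\sigma_i)$ realizing $\varphi_\omega(a)\in A_{\omega(a)}$; for each $s'\in S$ the bad set $B^{s'}_\omega=\{a:\rho(s'\varphi_\omega(a),\varphi_\omega(\sigma_{s'}(a)))\geq\eta\}$ has size $\leq(\delta_0/\eta)^2 d_i$. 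A Stirling pigeonhole on $B_\omega=\bigcup_{s'\in S}B^{s'}_\omega$ fixes a common value $B^*$ on a subfamily $\Omega$ with $|\Omega|\geq k^{|\cY_i|}e^{-\beta d_i}$ and $\beta\to 0$ as $\delta_0\to 0$; for $\delta_0$ small this exceeds $((k-1)\lambda)^{|\cY_i|}$, and Lemma~\ref{L-KM} produces $I\subseteq\cY_i$ of density $\geq c\mu(Y)$ with $\Omega|_I=\{1,\dots,k\}^I$. The internal set $Y_I=\prod_\fF I$ has $\mu(Y_I)\geq c\mu(Y)$, so $\sum_l\mu(t_lY_I)>2$ and inclusion-exclusion gives a pair $l\neq l'$ with $\mu(t_lY_I\cap t_{l'}Y_I)\geq q^*:=2(mc\mu(Y)-1)/(m(m-1))$, a constant uniform in $(F,\delta)$; by $G$-invariance this equals $\mu(Y_I\cap s^{-1}Y_I)$ for $s=t_l^{-1}t_{l'}\in S$. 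Since the pair depends on $(F,\delta)$ but lies in the finite set $S$, a directed-set pigeonhole (over the directed set of all $(F,\delta)$) picks out a single $s^*\in S$ realized cofinally often as $(F,\delta)$ refines.

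Within $I\cap\sigma_{s^*}^{-1}(I)\cap\Theta$ (with $\Theta=\{1,\dots,d_i\}\setminus B^*$ of complement $\leq|S|(\delta_0/\eta)^2 d_i$), I extract $\cZ_i$ with $\sigma_{s^*}(\cZ_i)\cap\cZ_i=\emptyset$ and $|\cZ_i|\geq q^* d_i/12$, by taking a large independent set in the path-and-cycle graph of $a\mapsto\sigma_{s^*}(a)$ (the fixed-point set of $\sigma_{s^*}$ is $o(d_i)$ by soficity since $s^*\neq e$). For any $\tilde\omega:\cZ_i\to\{1,\dots,k\}^2$ the $k$-labeling $\omega$ defined by $\omega(a)=\tilde\omega(a)_1$ on $\cZ_i$ and $\omega(\sigma_{s^*}(a))=\tilde\omega(a)_2$ on $\sigma_{s^*}(\cZ_i)$ is consistent by disjointness, extends arbitrarily on $I\setminus(\cZ_i\cup\sigma_{s^*}(\cZ_i))$, and lifts via $\Omega|_I=\{1,\dots,k\}^I$ to some $\omega\in\Omega$; the corresponding $\varphi_\omega$ satisfies $\varphi_\omega(a)\in A_{\tilde\omega(a)_1}$ and, since $\cZ_i\subseteq\Theta$, $\rho(s^*\varphi_\omega(a),\varphi_\omega(\sigma_{s^*}(a)))<\eta$, so $s^*\varphi_\omega(a)\in U_{\tilde\omega(a)_2}$ by the choice of $\eta$. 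Thus $\cZ_i$ is a $(\rho,F,\delta,\sigma_i)$-independence set for $\oA'$ of density $\geq q^*/12$, establishing the claim for the cofinally many $(F,\delta)$ that realize $s^*$, and hence (by monotonicity of $\Map$) for all $(F,\delta)$. The main obstacle is the circular dependence between $s$ (which must lie in $F_0$ so that $B^{s}_\omega$ is small) and the K-M output $I$ (from which an effective $s$ is extracted via Khintchine on $Y_I$); the preselection of the finite candidate set $S$ and the directed-set pigeonhole on $S$ are precisely what resolve this.
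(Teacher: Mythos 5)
Your proof is correct, and its combinatorial core is the same as the paper's: control the bad sets of the maps $\varphi_\omega$ on a preselected finite candidate family of shifts, pigeonhole over bad sets via Stirling, apply Karpovsky--Milman (Lemma~\ref{L-KM}) to get a large set $I$ on which all $k$-labelings are realized, find a shift $s$ in the candidate family for which $I$ overlaps $\sigma_s^{-1}(I)$ in positive density, split that overlap into a piece disjoint from its $s$-image to decouple the two coordinates of a $k^2$-labeling, and finally pigeonhole over the directed set of pairs $(F,\delta)$ to fix a single $s$. The one place where you genuinely deviate is the selection of the overlapping translate: the paper does this inside each finite model by the elementary double count $\sum_{s\in W}|\sigma_i(s)(\cW_i\cap\Theta_i\cap\cJ_i')|\ge 2d_i$, which forces two translates to intersect in density at least $1/|W|^2$, and then pigeonholes over $i$ along $\fF$ to get $s_{F,\delta}$; you instead package the independence sets into an internal set via Proposition~\ref{P-basics of internal independence sets}(1) and run the inclusion--exclusion once on the Loeb space, which yields the pair $(l,l')$ independently of $i$ and so eliminates that intermediate pigeonhole. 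The two are ultralimits of one another; your version buys a slightly cleaner bookkeeping of the constant $q^*$ at the cost of importing the Loeb-space formalism of Section~\ref{S-product}, while the paper's is self-contained within the finite models. (Two points of pure bookkeeping in your write-up: the pair $(l,l')$ produces $\mu(Y_I\cap sY_I)$ rather than $\mu(Y_I\cap s^{-1}Y_I)$ for $s=t_l^{-1}t_{l'}$, harmless since $S$ is symmetric and disjoint from $E\cup\{e\}$; and the passage from $\mu(Y_I\cap s^{*-1}Y_I)\ge q^*$ back to the sets $I_i\cap\sigma_i(s^*)^{-1}(I_i)$ silently uses that $\sigma_i(t_l)^{-1}\sigma_i(t_{l'})$ agrees with $\sigma_i(s^*)$ off an $o(d_i)$ set, which is where approximate multiplicativity enters in place of the paper's set $\cW_i$.)
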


\begin{proof}
Take $1<\lambda<\frac{k}{k-1}$. Then we have the constant $c>0$ in Lemma~\ref{L-KM}.
Take a $q>0$ such that for every nonempty finite subset $F$ of $G$ and $\delta>0$ the set $V_{F, \delta}$
of all $i\in \Nb$ for which $\oA$ has a $(\rho, F, \delta, \sigma_i)$-independence set of cardinality
at least $qd_i$ is in $\fF$.
Take a finite subset $W$ of $G$ such that $cq|W|>8$ and for any distinct $s, t\in W$
one has $s^{-1}t\not \in E$.
When $0<|W|^2\kappa<1/2$, the number of subsets of $\{1, \dots, d\}$ of cardinality no greater
than $|W|^2\kappa d$
is equal to $\sum^{\lfloor |W|^2\kappa d\rfloor}_{j=0}\binom{d}{j}$, which is at most
$|W|^2\kappa d \binom{d}{|W|^2\kappa d}$,
which by Stirling's approximation is less than $\exp(\beta d)$
for some $\beta > 0$ depending on $\kappa$
but not on $d$ when $d$ is sufficiently large with $\beta\to 0$ as $\kappa\to 0$.
Take $cq/(2|W|^2)>\kappa>0$ such that for any $1\le j\le k$ and $x\in X\setminus U_j$
one has $\rho(x, A_j)>\sqrt{\kappa}$ and for all sufficiently large $d\in \Nb$ the number
of subsets of $\{1, \dots, d\}$
of cardinality no greater than $|W|^2\kappa d$ is at most $\big(\frac{k}{(k-1)\lambda}\big)^{q d}$.

Let $F$ be a nonempty finite subset of $G$ and $\delta>0$. Set $F'=F\cup W$ and $\delta'=\min(\delta, \kappa)$.
Let $i\in\Nb$ be such that $\oA$ has a $(\rho, F', \delta', \sigma_i)$-independence set $\cJ_i$ of cardinality at least $qd_i$.
For each $\omega\in \{1, \dots, k\}^{\cJ_i}$ take a $\varphi_\omega\in \Map(\rho, F', \delta', \sigma_i)$ such that
$\varphi_\omega(a)\in A_{\omega(a)}$ for every $a\in \cJ_i$. For each $\omega\in \{1, \dots, k\}^{\cJ_i}$, there is some
$\Lambda_\omega\subseteq \{1, \dots, d_i\}$ with $|\Lambda_\omega|\ge (1-|W|^2\delta')d_i$ such that
$\rho(\varphi_\omega(\sigma_i(s)a),s\varphi_\omega(a))<\sqrt{\delta'}$ for all
$s\in W^{-1}W$ and $a\in \Lambda_\omega$.
By the choice of $\kappa$, when $i$ is large enough there is a subset $\Omega_i$ of $\{1, \dots, k\}^{\cJ_i}$ with
$\big(\frac{k}{(k-1)\lambda}\big)^{q d_i}|\Omega_i|\ge k^{|\cJ_i|}$
such that the set $\Lambda_\omega$ is the same, say $\Theta_i$, for every $\omega \in \Omega_i$,
and $|\Theta_i|/d_i\ge 1-|W|^2\delta'>1-cq/2$. Then
\[
|\Omega_i|\ge k^{|\cJ_i|}\bigg(\frac{(k-1)\lambda}{k}\bigg)^{q d_i}
\ge k^{|\cJ_i|}\bigg(\frac{(k-1)\lambda}{k}\bigg)^{|\cJ_i|}=((k-1)\lambda)^{|\cJ_i|}.
\]
By our choice of $c$, we can find a subset $\cJ'_i$ of $\cJ_i$ with $|\cJ'_i|\ge c|\cJ_i|\ge cq d_i$  such that every map
$\cJ'_i\rightarrow \{1, \dots, k\}$ extends to some $\omega\in \Omega_i$. When $i$ is large enough, one also has
$|\cW_i|\ge (1-cq/4)d_i$ for the set
\begin{align*}
\cW_i &= \big\{a\in \{1, \dots, d_i\} : ((\sigma_i(s))^{-1}\sigma_i(t))(a)=\sigma_i(s^{-1}t)(a)
\text{ for all } s, t\in W \\&\hspace*{40mm} \text{and } \sigma_i(s)(a)\neq a \text{ for all } s\in W^{-1}W\setminus \{e\} \big\}.
\end{align*}
Note that $|\cW_i\cap \Theta_i\cap \cJ'_i|\ge cqd_i/4$ and every map $\cW_i\cap \Theta_i\cap \cJ'_i\rightarrow \{1, \dots, k\}$
extends to some $\omega\in \Omega_i$.

Denote by $\eta$ the maximum of $|\sigma_i(s)(\cW_i\cap \Theta_i\cap \cJ'_i)\cap \sigma_i(t)(\cW_i\cap \Theta_i\cap \cJ'_i)|/d_i$
for $s, t$ ranging over distinct elements of $W$. Then for each $s\in W$ there is a subset $\Upsilon_{i, s}$ of
$\sigma_i(s)(\cW_i\cap \Theta_i\cap \cJ'_i)$ with cardinality at most $\eta |W|d_i$ such that the sets
$(\sigma_i(s)(\cW_i\cap \Theta_i\cap \cJ'_i))\setminus \Upsilon_{i, s}$ for $s\in W$ are pairwise disjoint.
It follows
that
\begin{align*}
\sum_{s\in W}|\sigma_i(s)(\cW_i\cap \Theta_i\cap \cJ'_i)|
&\le \eta |W|^2d_i
+\bigg|\bigcup_{s\in W}((\sigma_i(s)(\cW_i\cap \Theta_i\cap \cJ'_i))\setminus \Upsilon_{i, s})\bigg| \\
&\le \eta |W|^2d_i+d_i.
\end{align*}
On the other hand, we have
\[
\sum_{s\in W}|\sigma_i(s)(\cW_i\cap \Theta_i\cap \cJ'_i)|=|W|\cdot |\cW_i\cap \Theta_i\cap \cJ'_i|\ge |W|cq d_i/4\ge 2d_i.
\]
Thus $\eta\ge 1/|W|^2$. Then we can  find some distinct $t_i, t'_i\in W$ with
$|\sigma_i(t_i)(\cW_i\cap \Theta_i\cap \cJ'_i)\cap \sigma_i(t'_i)(\cW_i\cap \Theta_i\cap \cJ'_i)|\ge d_i/|W|^2$.
Set $s_i=t_i^{-1}t'_i$. Then $s_i\in W^{-1}W\setminus \{e\}$, and
\begin{align*}
\lefteqn{|(\cW_i\cap \Theta_i\cap \cJ'_i)\cap (\sigma_i(s_i))^{-1}(\cW_i\cap \Theta_i\cap \cJ'_i)|}\hspace*{35mm} \\
\hspace*{30mm} &=|(\cW_i\cap \Theta_i\cap \cJ'_i)\cap \sigma_i(s_i)(\cW_i\cap \Theta_i\cap \cJ'_i)|\\
&=|\sigma_i(t_i)(\cW_i\cap \Theta_i\cap \cJ'_i)\cap \sigma_i(t'_i)(\cW_i\cap \Theta_i\cap \cJ'_i)|\\
&\ge d_i/|W|^2.
\end{align*}
Take a maximal subset $\Xi_i$ of $(\cW_i\cap \Theta_i\cap \cJ'_i)\cap (\sigma_i(s_i))^{-1}(\cW_i\cap \Theta_i\cap \cJ'_i)$
subject to the condition that for any $a\in \Xi_i$,
neither  $\sigma_i(s_i)(a)$ nor $(\sigma_i(s_i))^{-1}(a)$  is in $\Xi_i$. Then
$\Xi_i\cup \sigma_i(s_i)\Xi_i\cup (\sigma_i(s_i))^{-1}\Xi_i\supseteq (\cW_i\cap \Theta_i\cap \cJ'_i)\cap (\sigma_i(s_i))^{-1}(\cW_i\cap \Theta_i\cap \cJ'_i)$.
It follows that $|\Xi_i|\ge |(\cW_i\cap \Theta_i\cap \cJ'_i)\cap (\sigma_i(s_i))^{-1}(\cW_i\cap \Theta_i\cap \cJ'_i)|/3\ge d_i/(3|W|^2)$.
Note that $\Xi_i$ and $\sigma_i(s_i)\Xi_i$ are disjoint subsets of $\cW_i\cap \Theta_i\cap \cJ'_i$.

Let $\xi=(\xi_1, \xi_2): \Xi_i\rightarrow \{1, \dots, k\}^2$.  Define a map
$\xi': \Xi_i\cup \sigma_i(s_i)\Xi_i\rightarrow \{1, \dots, k\}$ by
$\xi'(a)=\xi_1(a)$ and $\xi'(\sigma_i(s_i)(a))=\xi_2(a)$ for all $a\in \Xi_i$. Extend $\xi'$
to some $\omega\in \Omega_i$. Then
$\varphi_\omega \in \Map(\rho, F, \delta, \sigma_i)$, $\varphi_\omega(a)\in A_{\omega(a)}=A_{\xi_1(a)}$ and
$\varphi_\omega(\sigma_i(s_i)(a))\in A_{\omega(\sigma_i(s_i)(a))}=A_{\xi_2(a)}$ for all $a\in \Xi_i$.
For any $a\in \Xi_i$, since $\rho(\varphi_\omega(\sigma_i(s_i)a),s_i\varphi_\omega(a))<\sqrt{\delta'}\le \sqrt{\kappa}$,
by the choice of $\kappa$ we have $s_i\varphi_\omega(a)\in U_{\xi_2(a)}$, and hence
$\varphi_\omega(a)\in A_{\xi_1(a)}\cap s_i^{-1}U_{\xi_2(a)}$. Therefore $\Xi_i$
is a $(\rho, F, \delta, \sigma_i)$-independence set of cardinality at least $d_i/(3|W|^2)$ for the tuple
consisting of $A_l\cap s_i^{-1}U_j$ for all $l, j=1, \dots, k$.

There is some $s_{F, \delta}\in W^{-1}W\setminus \{e\}$ such that the set of $i\in V_{F', \delta'}$
for which $s_i$ is defined and $s_i=s_{F, \delta}$ lies in $\fF$.
It follows that we can find an $s\in W^{-1}W\setminus \{e\}$ such that for any nonempty finite
subset $F$ of $G$ and $\delta>0$ there are some nonempty finite subset $\tilde{F}$ of $G$
and $\tilde{\delta}>0$ with $F\subseteq \tilde{F}$ and $\delta>\tilde{\delta}$ such that
$s_{\tilde{F}, \tilde{\delta}}=s$. Then the tuple $\oA'$ consisting of $A_l\cap s^{-1}U_j$ for all $l, j=1, \dots, k$
has upper independence density at least $1/(3|W|^2)$ over $\Sigma$.
From the choice of $W$ we have $s\not \in E$.
\end{proof}

From Lemma~\ref{L-LY} by induction on $m$ we have:

\begin{lemma}\label{L-LY multiple}
Let $k\ge 2$ and $\oA=(A_1, \dots, A_k)$ be a tuple of closed subsets of $X$ with positive upper independence
density over $\Sigma$.  For each $j=1, \dots, k$ let $U_j$ be an open set containing $A_j$.
Let $E$ be a finite subset of $G$ and $m\in \Nb$. Then there exist $s_1, \dots, s_m\in G\setminus E$ such that
$s_i^{-1}s_j\not \in E$ for all distinct $1\le i, j\le m$ and the tuple $\oA'$ consisting of
$A_i\cap s^{-1}_1U_{\omega(1)}\cap \dots \cap s^{-1}_mU_{\omega(m)}$ for all $1\le i\le k$ and $\omega \in \{1, \dots, k\}^m$
has positive upper independence density over $\Sigma$.
\end{lemma}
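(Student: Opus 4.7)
The proof will go by induction on $m$, with the base case $m=1$ being Lemma~\ref{L-LY} itself. For the inductive step, I would argue as follows. Given $\oA$, the open sets $U_j$, and $E$ as in the hypothesis, first choose for each $j$ an open set $V_j$ with $A_j \subseteq V_j \subseteq \overline{V}_j \subseteq U_j$. Applying the inductive hypothesis to $\oA$ with $V_j$ replacing $U_j$ (and the same $E$) produces $s_1, \dots, s_{m-1} \in G\setminus E$ with $s_i^{-1}s_j \notin E$ for all distinct $i,j\in\{1,\dots,m-1\}$ such that the tuple
$\oB := (A_i \cap s_1^{-1}V_{\omega(1)} \cap \cdots \cap s_{m-1}^{-1}V_{\omega(m-1)})_{i,\omega}$,
indexed by $i \in \{1,\dots,k\}$ and $\omega \in \{1,\dots,k\}^{m-1}$, has positive upper independence density over $\Sigma$.

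Next I would pass to closures: each entry of $\oB$ sits inside the closed set
$B_{i,\omega} := A_i \cap s_1^{-1}\overline{V}_{\omega(1)} \cap \cdots \cap s_{m-1}^{-1}\overline{V}_{\omega(m-1)}$,
and since positive upper independence density is manifestly preserved when one enlarges entries of a tuple, $\oB^{\mathrm{cl}} := (B_{i,\omega})_{i,\omega}$ is a tuple of closed sets with positive upper independence density. Now apply Lemma~\ref{L-LY} to $\oB^{\mathrm{cl}}$, taking as enclosing open sets $W_{i,\omega} := U_i$ (note $B_{i,\omega} \subseteq A_i \subseteq U_i$), and replacing the finite set $E$ there by
$\tilde E := E \cup \bigcup_{j<m}(s_j E \cup s_j E^{-1})$.
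This yields $s_m \in G\setminus \tilde E$, which forces $s_m \notin E$ and $s_m^{-1}s_j,\, s_j^{-1}s_m \notin E$ for every $j<m$, and gives positive upper independence density for the tuple of $B_{i,\omega}\cap s_m^{-1}W_{i',\omega'} = B_{i,\omega} \cap s_m^{-1}U_{i'}$ ranging over all index pairs $((i,\omega),(i',\omega'))$.

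To finish, I would freeze the second $\omega$-coordinate (the one coming from the second index of Lemma~\ref{L-LY}) at any fixed value, passing to the sub-tuple indexed by $(i,\omega,i')$, which still has positive upper independence density since sub-tuples inherit the property. From $\overline{V}_{\omega(j)} \subseteq U_{\omega(j)}$ we have
$B_{i,\omega} \cap s_m^{-1}U_{i'} \subseteq A_i \cap s_1^{-1}U_{\omega(1)} \cap \cdots \cap s_{m-1}^{-1}U_{\omega(m-1)} \cap s_m^{-1}U_{i'}$,
and the map sending $(\omega,i')$ to the function $\omega'' \in \{1,\dots,k\}^m$ defined by $\omega''(j) := \omega(j)$ for $j<m$ and $\omega''(m) := i'$ is a bijection $\{1,\dots,k\}^{m-1}\times\{1,\dots,k\} \to \{1,\dots,k\}^m$, so monotonicity under entry-wise enlargement gives the positive upper independence density of the target tuple $\oA' = (A_i \cap s_1^{-1}U_{\omega''(1)} \cap \cdots \cap s_m^{-1}U_{\omega''(m)})_{i,\omega''}$. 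The main delicacy is the combinatorial bookkeeping when invoking Lemma~\ref{L-LY}: it turns a $K$-tuple into a $K^2$-tuple, so the enclosing open sets $W_{i,\omega}$ must be chosen so that the correct sub-tuple emerges after the enlargement $\overline{V}_j \subseteq U_j$. The key observation is that $W_{i,\omega}$ should depend only on $i$; once this is seen, the induction runs smoothly.
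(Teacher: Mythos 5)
Your proposal is correct and is exactly the induction on $m$ via Lemma~\ref{L-LY} that the paper intends (the paper states the lemma with only the remark ``by induction on $m$''). The details you supply — shrinking the $U_j$ to $V_j$ with $\overline{V}_j\subseteq U_j$ so that closures can be taken before reapplying Lemma~\ref{L-LY}, enlarging $E$ to $\tilde E$ to secure the conditions $s_m\notin E$ and $s_i^{-1}s_j\notin E$, and extracting the sub-tuple after noting that the enclosing open sets depend only on the first index — are precisely the right ones.
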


We are ready to prove Theorem~\ref{T-positive entropy to chaos}.

\begin{proof}[Proof of Theorem~\ref{T-positive entropy to chaos}] We may assume that the $A_j$
are closed and pairwise disjoint. Take an increasing sequence $E_1\subseteq E_2\subseteq \dots$ of
finite subsets of $G$ with union $G$. We shall construct, via induction on $m$, closed nonempty subsets
$A_{m, j}$ of $X$ for $1\le j\le k^{2^{m-1}}$ with the following properties:
\begin{enumerate}
\item[(a)] $A_{1, j} = A_j$ for all $1\le j\le k$,

\item[(b)] for every $m\ge 2$ and $1\le i\le k^{2^{m-2}}$, $A_{m-1, i}$ contains exactly $k^{2^{m-2}}$
of the $A_{m, j}$ for $1\le j\le k^{2^{m-1}}$,

\item[(c)] for every $m\ge 2$ and map $\gamma:\{1, \dots, k^{2^{m-1}}\}\rightarrow \{1, \dots, k^{2^{m-2}}\}$
there exists a $t_{\gamma}\in G\setminus E_{m-1}$  such that
$t_{\gamma}A_{m, j}\subseteq \overline{U_{m-1, \gamma(j)}}$ for all
$1\le j\le k^{2^{m-1}}$, where $U_{m-1, i}=\{x\in X: \rho(x, A_{m-1, i})<2^{-m}\delta_{m-1}\}$
for all $1\le i\le k^{2^{m-2}}$ and $\delta_{m-1}=\min \rho(x, y)$ for $x, y$ ranging over points in distinct $A_{m-1, j}$,

\item[(d)] when $m\ge 2$, $\diam(A_{m, j})\le 2^{-m}$ for all $1\le j\le k^{2^{m-1}}$,

\item[(e)] for every $m$, the sets $A_{m, j}$ for $1\le j\le k^{2^{m-1}}$ are pairwise disjoint,

\item[(f)] for every $m$, the collection $\{A_{m, j} : 1\le j\le k^{2^{m-1}}$\},
ordered into a tuple, has positive upper independence density over $\Sigma$.
\end{enumerate}
Suppose that we have constructed such $A_{m, j}$ over all $m$.
Properties (b), (d) and (e) imply that $Z=\bigcap_{m\in \Nb}\bigcup_{j=1}^{k^{2^{m-1}}}A_{m, j}$ is a Cantor set.
Property (a) implies that $Z_j:=Z\cap A_j$ is also a Cantor set for each $1\le j\le k$.
Condition (1) follows from properties (d) and (f). Condition
(2) follows from properties (c) and (d).

We now construct the $A_{m, j}$. Define $A_{1, j}$ for $1\le j\le k$ according to property (a).
By assumption properties (e)  and (f) are
satisfied for $m=1$. Assume that we have constructed $A_{m, j}$ for
all $j=1,\dots , k^{2^{m-1}}$ with the above
properties. Set $n=1+(k^{2^{m-1}})^{k^{2^m}}$. By Lemma~\ref{L-LY multiple} we can find
$s_1, \dots, s_n\in G\setminus E_m$ such that  the tuple  consisting of
$A_{m,i}\cap s^{-1}_1U_{m, \omega(1)}\cap \dots \cap s^{-1}_nU_{m, \omega(n)}$ for all $1\le i\le k^{2^{m-1}}$
and $\omega \in \{1, \dots, k^{2^{m-1}}\}^n$ has positive upper independence density over $\Sigma$.
Take a bijection $\varphi: \{1, \dots, k^{2^{m-1}}\}^{\{1, \dots, k^{2^m}\}}\rightarrow \{2, \dots, n\}$.
For each $\gamma:\{1, \dots, k^{2^{m}}\}\rightarrow \{1, \dots, k^{2^{m-1}}\}$, set
$t_\gamma=s_{\varphi(\gamma)}$. For all $1\le i, j\le k^{2^{m-1}}$, define
$\omega_{i, j}\in \{1, \dots, k^{2^{m-1}}\}^n$ by $\omega_{i, j}(1)=j$ and
$\omega_{i, j}(\varphi(\gamma))=\gamma((i-1)k^{2^{m-1}}+j)$ for all
$\gamma \in \{1, \dots, k^{2^{m-1}}\}^{\{1, \dots, k^{2^m}\}}$,
and set
$A_{m+1, (i-1)k^{2^{m-1}}+j}=A_{m,i}\cap s^{-1}_1\overline{U_{m, \omega_{i, j}(1)}}\cap \dots \cap s^{-1}_n\overline{U_{m, \omega_{i, j}(n)}}$.
Then properties (b),
(c), (e) and (f) hold for $m+1$. For each $1\le j\le k^{2^m}$ write
$A_{m+1, j}$ as the union of finitely many closed subsets each with
diameter no bigger than $2^{-(m+1)}$. Using
Lemma~\ref{L-decomposition E} we may replace $A_{m+1, j}$ by one of
these subsets. Consequently, property (d) is also satisfied for
$m+1$. This completes the induction procedure and hence the proof of
the theorem.
\end{proof}

\begin{corollary}\label{C-positive entropy to chaos}
If $h_\Sigma (X,G) > 0$ for some sofic approximation sequence $\Sigma$ then the action is Li-Yorke chaotic.
\end{corollary}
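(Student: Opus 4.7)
The plan is to deduce the corollary directly from Proposition~\ref{P-basic}(3) together with Theorem~\ref{T-positive entropy to chaos}, essentially reading off Li-Yorke chaos from the properties of the Cantor sets produced there.

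First I would use Proposition~\ref{P-basic}(3): since $h_\Sigma(X,G) > 0$, there is a nondiagonal $\Sigma$-IE-pair $(x_1, x_2) \in X \times X$, so $x_1 \neq x_2$. Fix a compatible metric $\rho$ on $X$ and choose open neighbourhoods $A_1 \ni x_1$, $A_2 \ni x_2$ whose closures are disjoint, so that $\delta := \rho(\overline{A_1}, \overline{A_2}) > 0$. Then I apply Theorem~\ref{T-positive entropy to chaos} with $k = 2$ to obtain Cantor sets $Z_1 \subseteq A_1$ and $Z_2 \subseteq A_2$ such that the set $Z := Z_1 \cup Z_2$ satisfies property (2) of that theorem; that is, for every $m \in \Nb$, distinct $y_1,\dots,y_m \in Z$, and any $y_1',\dots,y_m' \in Z$,
\[
\liminf_{G \ni s \to \infty} \max_{1 \le i \le m} \rho(sy_i, y_i') = 0.
\]
The set $Z$ is uncountable since it contains the Cantor set $Z_1$.

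Next I will check that every nondiagonal pair in $Z \times Z$ is a Li-Yorke pair. Fix $y, y' \in Z$ with $y \neq y'$. For the liminf condition, I pick any $u \in Z$ and apply property (2) with $m = 2$, $(y_1, y_2) = (y, y')$, and $y_1' = y_2' = u$; this produces a net $s_\alpha \to \infty$ in $G$ with $\rho(s_\alpha y, u) \to 0$ and $\rho(s_\alpha y', u) \to 0$, whence $\rho(s_\alpha y, s_\alpha y') \to 0$. For the limsup condition, I instead take $y_1' \in Z_1$ and $y_2' \in Z_2$ and apply property (2) with $(y_1, y_2) = (y, y')$; this produces a net $t_\alpha \to \infty$ with $\rho(t_\alpha y, y_1') \to 0$ and $\rho(t_\alpha y', y_2') \to 0$, and since $y_1' \in \overline{A_1}$ and $y_2' \in \overline{A_2}$ the triangle inequality forces $\liminf_\alpha \rho(t_\alpha y, t_\alpha y') \ge \delta > 0$.

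There is no real obstacle here — the theorem of the previous section already does all the hard work, and the only thing to verify is that the two choices of $(y_1', y_2')$ above are compatible with the ``distinct $y_1, \dots, y_m$'' hypothesis in property (2); since we only insist that $y_1 = y$ and $y_2 = y'$ be distinct (the $y_i'$ may coincide and may lie anywhere in $Z$), this is immediate. Thus $Z$ is an uncountable scrambled set, witnessing Li-Yorke chaos.
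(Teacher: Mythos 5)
Your proof is correct and is exactly the intended deduction: the paper leaves the corollary without an explicit proof, but the standard route is precisely Proposition~\ref{P-basic}(3) to produce a nondiagonal $\Sigma$-IE-pair, followed by Theorem~\ref{T-positive entropy to chaos} with $k=2$, with property (2) applied twice (once with both targets equal for the liminf, once with targets in the two separated Cantor sets for the limsup). Both applications respect the ``distinct $y_1,\dots,y_m$'' hypothesis as you note, so there is nothing to add.
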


An action $G\curvearrowright X$ is said to be {\it distal} if $\inf_{s\in G}\rho(sx, sy)>0$ for all distinct $x, y\in X$.
We refer the reader to \cite{Auslander} for the basics of distal actions. Since distal actions have no Li-Yorke pairs,
from Corollary~\ref{C-positive entropy to chaos} we obtain the following consequence, which
extends the result of Parry that distal integer actions on compact metrizable spaces have zero entropy \cite{Parry}.
For amenable $G$ we write $h_\topol (X,G)$ for the classical topological entropy, which is equal to the
sofic entropy $h_\Sigma (X,G)$ for every $\Sigma$ \cite{KerLi10}.

\begin{corollary}\label{C-distal}
If the action $G\curvearrowright X$ is distal, then
$h_\Sigma (X,G)=0$  or $-\infty$. In particular, if $G$ is amenable and $G\curvearrowright X$ is distal, then $h_\topol (X,G)=0$.
\end{corollary}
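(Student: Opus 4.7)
The plan is to read this off immediately from Corollary~\ref{C-positive entropy to chaos}, using only the simple fact that a distal action cannot harbour any Li-Yorke pairs. So the real content lies in the preceding chaos-from-entropy theorem, and this corollary is essentially a two-line observation afterwards.

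First I would verify that distality excludes Li-Yorke pairs. Suppose $G\curvearrowright X$ is distal and $(x,y)\in X\times X$ is nondiagonal. By definition of distality, $\eta:=\inf_{s\in G}\rho(sx,sy)>0$, and hence for every finite $F\subseteq G$ we have $\inf_{s\in G\setminus F}\rho(sx,sy)\geq\eta$. Passing to the limit over the net of finite subsets of $G$ yields
\[
\liminf_{G\ni s\to\infty}\rho(sx,sy)\ge\eta>0,
\]
so $(x,y)$ is not a Li-Yorke pair. Consequently a distal action contains no nondiagonal Li-Yorke pair, so no uncountable subset $Z\subseteq X$ can consist of pairwise Li-Yorke points, which means the action fails to be Li-Yorke chaotic.

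Now I would apply the contrapositive of Corollary~\ref{C-positive entropy to chaos}: since $G\curvearrowright X$ is not Li-Yorke chaotic, $h_\Sigma(X,G)\le 0$ for every sofic approximation sequence $\Sigma$, forcing $h_\Sigma(X,G)\in\{0,-\infty\}$. For the amenable case, I would invoke the two facts recorded after Definition~\ref{D-topological entropy}: that for amenable $G$ the sofic topological entropy coincides with the classical topological entropy for every $\Sigma$, and that the value $-\infty$ does not occur for actions of amenable groups. Combining these yields $h_\topol(X,G)=0$, recovering Parry's theorem and extending it to all countable amenable groups.

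There is no real obstacle here beyond checking the first paragraph; the only small point is the elementary monotonicity used to pass from $\inf_{s\in G}$ to $\liminf_{G\ni s\to\infty}$, and the rest is a direct appeal to results already in hand.
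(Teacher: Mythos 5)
Your argument is exactly the paper's: the corollary is deduced by observing that distality rules out Li-Yorke pairs (the liminf along $s\to\infty$ is bounded below by $\inf_{s\in G}\rho(sx,sy)>0$) and then applying the contrapositive of Corollary~\ref{C-positive entropy to chaos}, with the amenable case handled by the coincidence of sofic and classical entropy and the nonoccurrence of the value $-\infty$. Nothing is missing.
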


We remark that every distal action has an invariant Borel probability measure \cite[page 125]{Auslander} \cite[page 496]{Vries}.
But we do not know whether one can conclude that $h_\Sigma (X,G)=0$ in Corollary~\ref{C-distal}.

\end{document}